\documentclass[a4paper,11pt]{amsart}
 \usepackage{amsmath, amssymb}
  \usepackage{amsmath, amssymb}
  \usepackage{amsopn}	 \usepackage{amstext}
  \usepackage{amsthm}	 \usepackage{amscd}
  \usepackage{amsgen}	 \usepackage{latexsym}
  \usepackage{enumerate}
  \usepackage{graphicx} \usepackage{color} \usepackage{cancel} \usepackage{ulem}
\newtheorem{thm}{Theorem}[section] 

\newtheorem{prop}[thm]{Proposition}
\newtheorem{lem}[thm]{Lemma}

\theoremstyle{definition}

\newcommand{\bpf}{\noindent{\it Proof. }}  	
\newcommand{\epf}{\hfill $\Box$}

 \newcommand{\R}{{\mathbf R}}  \newcommand{\C}{{\mathbf C}}

\newcommand{\bs}{{\boldsymbol{s}}} 
\newcommand{\bt}{{\boldsymbol{t}}}
\newcommand{\bu}{{\boldsymbol{u}}} 
\newcommand{\bv}{{\boldsymbol{v}}}

\newcommand{\bq}{{\boldsymbol{q}}} 
\newcommand{\br}{{\boldsymbol{r}}}

\def\GR{{\Gamma_{\R}}}
\def\GC{{\Gamma_{\C}}}
\def\GF{{\GR}}

\numberwithin{equation}{section}

\allowdisplaybreaks
\begin{document}

\title{Archimedean zeta integrals on $ {\rm SO}_{2n} \times {\rm GL}_n$ and $ {\rm Sp}_{2n} \times {\rm GL}_n $}
\author{Taku ISHII and Eric Stade}
\address{Faculty of Global Studies and Sustainability, Seikei University, 3-3-1 Kichijoji-Kitamachi,
Musashino, Tokyo, 180-8633, Japan}
\email{ishii@gss.seikei.ac.jp}
\address{Department of Mathematics, University of Colorado, Boulder, CO 80309, USA}
\email{eric.stade@colorado.edu}

\renewcommand{\thefootnote}{\fnsymbol{footnote}}
\footnote[0]{2010 \textit{Mathematics Subject Classification.} 11F70, 22E45.}

\date{\today}

\maketitle

\date{\today}
\maketitle 

\begin{abstract}
We evaluate the archimedean Rankin-Selberg zeta integrals associated with spherical principal series representations of 
${\rm SO}_{2n}(\R) \times {\rm GL}_n(\R) $ and 
$ {\rm Sp}_{2n}(\R) \times {\rm GL}_n(\R) $, where $ {\rm SO}_{2n} $ is split and $n\ge2$. 
We obtain explicit formulas expressing these integrals as quotients of products of gamma functions. 
Our approach uses Mellin-Barnes representations of spherical Whittaker functions, 
together with new integral identities for their Mellin transforms and transformations of Barnes integrals. 
In the even orthogonal case, we establish a recurrence in the rank and evaluate the integral by induction. 
The symplectic case then follows from an explicit identity relating the two families of zeta integrals.

\end{abstract}

\section*{Introduction}

Explicit evaluations of local zeta integrals are an important part of integral representations of automorphic $L$-functions. 
After unfolding, a global integral gives rise to local integrals involving Whittaker functions and, 
in many constructions, sections of induced representations.
At almost all finite places, the local data are unramified, and the corresponding calculation identifies the Euler factors represented by
the global integral. The archimedean factors require a different analysis,
in which explicit formulas for Whittaker functions play a central role.

The integrals studied in this paper arise from Methods B and C, respectively, of Gelbart and Piatetski-Shapiro in 
\cite[Part~B]{GePS},
for the pairs ${\rm SO}_{2n} \times {\rm GL}_n$ and ${\rm Sp}_{2n} \times {\rm GL}_n$. 
Here ${\rm SO}_{2n}$ is split and $n\ge2$. 
Their nonarchimedean unramified evaluation is described in \cite[Part~B, Appendix, \S A.2]{GePS}. 
More precisely, let $v$ be a finite place at which all the data are unramified, 
and let $\pi_v$ and $\pi'_v$ be the representations of
$G (F_v)$ and ${\rm GL}_n(F_v)$, respectively, where $G $ denotes either the split orthogonal group ${\rm SO}_{2n}$ (of rank $n$) or the symplectic group ${\rm Sp}_{2n}$ (of rank $n$). 
With the standard spherical data and Haar measures, the local integral has the form
\[
 Z_v^{\circ}(s)
 =\frac{L(s,\pi_v \times \pi_v')}{L(2s,\pi'_v,\operatorname{Sym}^2)}.
\]
The numerator is the local $L$-factor for the tensor product of the
standard representations of the dual groups. The notation
$Z_v^{\circ}(s)$ includes the unramified Schwartz function in the
symplectic case. Here the inducing parameter is written as
$|\det|^{s-1/2}$, with normalized parabolic induction, and the spherical
section is taken before the additional $L$-factor normalization.
The appearance of the symmetric-square factor in both cases is explained
in \cite[Part~B, \S\S 13.2--13.3]{GePS}; in the symplectic case, the
Eisenstein series is defined on the metaplectic cover.

The purpose of the present paper is to establish explicit real analogues
of these unramified evaluations for spherical principal series
representations. For general linear groups, evaluations of this kind were obtained by the
second author in \cite{St01}, \cite{St}.

The recursive formulas for ${\rm GL}_n(\R)$ Whittaker functions established by the authors in \cite{IS07}
express functions of rank $n$ in terms of those of rank $n-1$.
In \cite{IS13}, we used such formulas to compute archimedean zeta integrals
for general linear groups and split odd orthogonal groups. 
Here we continue this approach for $ {\rm SO}_{2n}(\R) \times {\rm GL}_n(\R) $ and
$ {\rm Sp}_{2n}(\R) \times {\rm GL}_n(\R) $, obtaining
the same quotient of $L$-factors as in the unramified nonarchimedean case.

We describe our results in terms of the radial parts of spherical Whittaker functions. 
For $\mu=(\mu_1,\ldots,\mu_n)$ and $\nu=(\nu_1,\ldots,\nu_n)$ in $\C^n$, 
let $\widetilde W^A_{n,\mu}$, $\widetilde W^D_{n,\nu}$, and
$\widetilde W^C_{n,\nu}$ denote the $\rho$-shifted spherical Whittaker
functions on ${\rm GL}_n(\R)$, ${\rm SO}_{2n}(\R)$, and ${\rm Sp}_{2n}(\R)$, respectively. 
See Section \ref{sec:Wh} for the precise definitions and normalizations. Write
\begin{align*}
 \alpha[a_1,\ldots,a_n]&=\operatorname{diag}(a_1,\ldots,a_n) \in {\rm GL}_n(\R),\\
 \beta[a_1,\ldots,a_n] 
 &=\operatorname{diag}(a_1,\ldots,a_n,a_n^{-1},\ldots,a_1^{-1}) \in {\rm SO}_{2n}(\R), {\rm Sp}_{2n}(\R).
 \end{align*}
With the local data and measures specified in Sections \ref{sec:Arch_D} and \ref{sec:Arch_C}, 
the two zeta integrals take the forms
\begin{align*}
 I^n_{\nu,\mu}(s)
 & =\int_{(\R_+)^n} \widetilde W^D_{n,\nu}\bigl(\beta[a_1,\ldots,a_n]\bigr)
 \\
 &\times  \widetilde W^A_{n,\mu} \bigl(\alpha[a_1,\ldots,a_{n-1},(a_n+a_n^{-1})^{-1}]\bigr)
 \cdot \Bigl(\frac{a_1\cdots a_{n-1}}{a_n+a_n^{-1}}\Bigr)^{\!s}
 \prod_{i=1}^n\frac{da_i}{a_i}
\end{align*}
and
\begin{align*}
 J^n_{\nu,\mu}(s)
 & =\int_{(\R_+)^n}
 \widetilde W^C_{n,\nu}\bigl(\beta[a_1,\ldots,a_n]\bigr)
 \widetilde W^A_{n,\mu}\bigl(\alpha[a_1,\ldots,a_n]\bigr)
 \cdot (a_1\cdots a_n)^s
 \exp(-2\pi a_n^2)\prod_{i=1}^n\frac{da_i}{a_i}.
\end{align*}
The integrals are initially considered in a domain of absolute convergence;
the resulting identities are understood elsewhere by meromorphic
continuation.
Here is our main result.

\begin{thm} $($Theorems \ref{thm:SOGL} and \ref{thm:SpGL}$)$
Put $\GR(s)=\pi^{-s/2}\Gamma(s/2)$. We have
\[
 I^n_{\nu,\mu}(s)
 =  \frac{\prod_{1 \le i,j \le n}  \GR(s+\nu_i+\mu_j)  \GR(s-\nu_i+\mu_j) }{ 
\prod_{1\le i\le j\le n}  \GR(2s+\mu_i+\mu_j)}
\]
and
\[
 J^n_{\nu,\mu}(s)
 = \frac{ \prod_{1 \le i \le n} \GR (s+\mu_i)
 \prod_{1 \le i, j \le n} 
   \GR (s+\nu_i+\mu_j) \GR(s-\nu_i+\mu_j)   }{  \prod_{1\le i\le j\le n}  \GR(2s+\mu_i+\mu_j) },
\]with  $\Gamma_\R$ as in \eqref{gamma-f}.
\end{thm}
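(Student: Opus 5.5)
The plan follows the strategy announced in the abstract: prove the $I^n_{\nu,\mu}(s)$ formula by induction on $n$ via a recurrence in the rank, then deduce the $J^n_{\nu,\mu}(s)$ formula from an explicit identity relating the two families.

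\emph{Step 1: the $\mathrm{GL}_n$ factor.} Represent $\widetilde W^A_{n,\mu}$ by its Mellin--Barnes integral, from the recursive formulas of \cite{IS07}. This writes it as an inverse Mellin transform in $a_1,\ldots,a_{n-1}$ and in the last entry $(a_n+a_n^{-1})^{-1}$, with a kernel that is a product of $\Gamma_\R$ factors and a Barnes integral over the $\mathrm{GL}_{n-1}$ data.

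\emph{Step 2: the $\mathrm{SO}_{2n}$ factor.} For $\widetilde W^D_{n,\nu}$, use the analogous recursive description, expressing the rank-$n$ function through the rank-$(n-1)$ function $\widetilde W^D_{n-1,\nu'}$ integrated against explicit Gamma kernels. Substitute both representations into $I^n_{\nu,\mu}(s)$.

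\emph{Step 3: the $a_n$-integral.} The integral in $a_n$ involves $(a_n+a_n^{-1})^{-t}$ times a power $a_n^{u}$. It is a beta integral:
\[
\int_0^\infty a^{u}(a+a^{-1})^{-t}\,\frac{da}{a}=\frac{1}{2}\,\frac{\Gamma\bigl(\tfrac{t+u}{2}\bigr)\,\Gamma\bigl(\tfrac{t-u}{2}\bigr)}{\Gamma(t)}.
\]
After this, the remaining integrals over $a_1,\ldots,a_{n-1}$ should reassemble into a zeta integral of the same shape in rank $n-1$, namely $I^{n-1}_{\nu',\mu'}(s')$ with shifted parameters, integrated against an explicit Barnes-type kernel.

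\emph{Step 4: the recurrence.} Apply the induction hypothesis to $I^{n-1}_{\nu',\mu'}(s')$. This leaves a multiple Barnes integral whose integrand is a ratio of Gamma functions. Evaluate it using Barnes' first and second lemmas and their higher-dimensional analogues (Gustafson-type or Stade-type identities, as used in \cite{IS13}, \cite{St}).

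\emph{Expected main obstacle.} I expect Step 4 to be the hardest. The denominator $\prod_{i\le j}\GR(2s+\mu_i+\mu_j)$ has to emerge from the Barnes evaluation, and the rank-$(n-1)$ symmetric-square denominator $\prod_{i\le j\le n-1}\GR(2s+\mu_i+\mu_j)$ must cancel correctly. My first attempt would be to rewrite the $\Gamma(t)^{-1}$ from Step 3 via the duplication formula as $\GR(2t)^{-1}$-type factors, then apply a second-lemma transformation to match the needed denominator. I would sanity-check the base case $n=1$, where $\widetilde W^D_{1,\nu}(\beta[a])=a^{\nu}$ up to normalization and the claim reduces to a single beta integral. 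The base case could instead be taken as $n=2$, since $\mathrm{SO}_4$ is locally $\mathrm{SL}_2\times\mathrm{SL}_2$.

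\emph{Step 5: the symplectic case.} For $J^n_{\nu,\mu}(s)$, compare the Mellin--Barnes kernels of $\widetilde W^C_{n,\nu}$ and $\widetilde W^D_{n,\nu}$; they should differ only in the last-root factors. Integrate out $a_n$ against $\exp(-2\pi a_n^2)$, which gives a Gamma factor. The aim is an identity expressing $J^n_{\nu,\mu}(s)$ as $I^n_{\nu,\mu}(s)$ (or a Barnes integral of $I$-type integrands) multiplied by $\prod_i\GR(s+\mu_i)$. A Barnes-lemma step should collapse the extra integration, and the symplectic formula then follows from the orthogonal one.
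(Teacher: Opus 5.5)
\textbf{Verdict: there is a genuine gap.} Your outline follows the strategy announced in the abstract, but Steps 3--4, where all the difficulty lies, are left unspecified.

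\textbf{The recurrence (Steps 3--4).} Integrating out $a_n$ does not by itself leave a rank-$(n-1)$ zeta integral. The beta integral in $a_n$ is exactly the ingredient of Proposition~\ref{prop:I}. What it produces is a Barnes integral pairing the rank-$n$ kernels $V^D_{n,\nu}$ and $V^A_{n,\mu}$, with no rank-$(n-1)$ object present. The rank drops only after both kernels are replaced by their recursions (Propositions~\ref{prop:exp_GLn} and \ref{prop:exp_SO2n}). At that point $V^D_{n-1,\tilde\nu}(\bt)$ and $V^A_{n-1,(\mu_1,\ldots,\mu_{n-2},\mu_n)}(\bq)$ sit in independent variables, coupled by a large Gamma kernel, and this is not a zeta integral of any rank.

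Undoing that coupling is the whole proof. It is not done by evaluating a closed Gustafson-type integral. It needs Pieri-type (eigenfunction) identities for the kernels, valid for arbitrary $\bs$:
\begin{enumerate}
\item Proposition~\ref{prop:Pieri_GLn}, used with $\sigma=s+\nu_n$ and with $\sigma=s-\nu_n$;
\item Lemma~\ref{lem:edge_integral}, for the block $s_{n-2},s_{n-1},s_n$;
\item the new identity Proposition~\ref{prop:new_GLn}, which lifts $V^A_{n-2,(\mu_1,\ldots,\mu_{n-2})}$ back to $V^A_{n-1,\tilde\mu}$, so that the removed parameter switches from $\mu_{n-1}$ to $\mu_n$;
\item above all, the ${\rm SO}_{2n}$ analogue Proposition~\ref{prop:Pieri_SO2n}, which absorbs the leftover kernel $M_{n-1}(\br,\bt;s+\mu_n)$ into $V^D_{n-1,\tilde\nu}$.
\end{enumerate}
None of these appears in your plan. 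Their effect is that the recurrence is exact: $I^n_{\nu,\mu}(s)$ equals an explicit finite product of $\GR$-factors times $I^{n-1}_{\tilde\nu,\tilde\mu}(s)$ at the same $s$. So the induction hypothesis is never used under an integral sign.

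In your version, $I^{n-1}_{\nu',\mu'}(s')$ with integration-dependent parameters would put the whole rank-$(n-1)$ product, including $\prod_{i\le j\le n-1}\GR(2s'+\mu'_i+\mu'_j)^{-1}$, inside a multiple Barnes integral. You name no identity that evaluates it. The cancellation you worry about is an artifact of that setup. In the paper nothing from rank $n-1$ cancels; the new factors $\prod_{i=1}^{n}\GR(2s+\mu_i+\mu_n)^{-1}$ arise separately:
\begin{enumerate}
\item $\GR(2s+\mu_{n-1}+\mu_n)^{-1}$ from Lemma~\ref{lem:edge_integral};
\item $\prod_{i=1}^{n-2}\GR(2s+\mu_i+\mu_n)^{-1}$ from Proposition~\ref{prop:new_GLn};
\item $\GR(2s+2\mu_n)^{-1}=\GR(2\sigma)^{-1}$ from Proposition~\ref{prop:Pieri_SO2n} with $\sigma=s+\mu_n$.
\end{enumerate}

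\textbf{The symplectic case (Step 5).} The shape is right: the paper does prove $J^n_{\nu,\mu}(s)=\prod_{i=1}^n\GR(s+\mu_i)\cdot I^n_{\nu,\mu}(s)$ without knowing $I^n_{\nu,\mu}(s)$. But the mechanism you describe would not produce it.
\begin{enumerate}
\item The type $C$ and type $D$ kernels do not differ only in last-root factors. $V^C_{n,\nu}$ is an $n$-fold Barnes transform of $V^D_{n,\nu}$ (Proposition~\ref{prop:exp_Sp2n}).
\item The Gaussian's Mellin transform $\GC(q)=\GR(q)\GR(q+1)$ is integrated out by Barnes' first lemma. After that, the factor $\prod_{i=1}^n\GR(s+\mu_i)$ comes from applying Proposition~\ref{prop:Pieri_GLn} with $\sigma=s$ to the $(n-1)$-fold integral over $s_1,\ldots,s_{n-1}$. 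The same step rebuilds $V^A_{n,\mu}$ at the arguments of Proposition~\ref{prop:I}. A single Barnes-lemma step cannot generate $n$ factors involving every $\mu_i$.
\item The base case must be $n=2$, not $n=1$, because the ${\rm SO}_{2n}$ recursion (Proposition~\ref{prop:exp_SO2n}) starts at $n=3$ from the explicit $V^D_2$. The paper proves $n=2$ using Lemma~\ref{lem:int2} and Barnes' two lemmas.
\end{enumerate}
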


For generic parameters, the numerators are the corresponding
archimedean tensor product $L$-factors. 
The common denominator is the symmetric-square factor
\[
 L(2s,\pi_\mu,\operatorname{Sym}^2)
 =\prod_{1\le i\le j\le n}\GR(2s+\mu_i+\mu_j)
\]
associated with the ${\rm GL}_n(\R) $ inducing representation.


The main part of the proof is the evaluation of the even orthogonal
integral $I^n_{\nu,\mu}(s)$, which we achieve as follows. Using recursive Mellin-Barnes formulas for the Whittaker
functions, we establish a recurrence expressing $I^n_{\nu,\mu}(s)$ 
as an explicit product of gamma factors times
$I^{n-1}_{\tilde\nu,\tilde\mu}(s)$, where
$\tilde\nu=(\nu_1,\ldots,\nu_{n-1})$ and
$\tilde\mu=(\mu_1,\ldots,\mu_{n-1})$; see Proposition \ref{prop:SOGL_induction}.

The initial case -- computation of $I^2_{\nu,\mu}(s)$ -- corresponds to the spherical triple-product
calculation of Ikeda in \cite{Ik99}. Here we give a direct proof of this case, using identities for Barnes integrals.
The induction then relies on two additional, new  integral identities for 
Mellin transforms of spherical Whittaker functions.  Both of these new identities are related to an earlier result (\cite[Proposition 2.6]{IS13}), which expresses the Mellin transform of $\widetilde W^A_{n,\mu}$ as an integral of another Mellin transform of $\widetilde W^A_{n,\mu}$, against some gamma functions.  See Proposition \ref{prop:Pieri_GLn} below.  The first of the new identities -- see Proposition \ref{prop:new_GLn} below -- employs Proposition \ref{prop:Pieri_GLn} to obtain a formula relating two different integrals of $\widetilde W^A_{n,\mu}$ against gamma functions.  The second of the new identities -- see Proposition \ref{prop:Pieri_SO2n} -- is an even-orthogonal analogue of Proposition \ref{prop:Pieri_GLn}.
Transformations of Barnes integrals allow us to reorganize the kernels
so that these identities can be applied.  As a result, we obtain Theorem 3.2, which is the desired result  for the case of $ {\rm SO}_{2n}(\R) \times {\rm GL}_n(\R) $.

The symplectic evaluation is then obtained by a reduction to the even orthogonal one.
 The Mellin-Barnes formula in Proposition \ref{prop:exp_Sp2n} below relates
$\widetilde W^C_{n,\nu}$ to $\widetilde W^D_{n,\nu}$.
Combining this formula with the formula for the Mellin transform of a Gaussian,
Barnes' first lemma, and Proposition \ref{prop:Pieri_GLn}, gives
\[
 J^n_{\nu,\mu}(s)  = \prod_{j=1}^n\GR(s+\mu_j)  \cdot  I^n_{\nu,\mu}(s).
\]
This relation is obtained directly from the integral representations; it does not require {\it a priori} evaluation of $I^n_{\nu,\mu}(s)$.  Substituting that evaluation -- that is, substituting Theorem 3.2 -- 
then yields Theorem \ref{thm:SpGL}, which is the desired result in the case of $ {\rm Sp}_{2n}(\R) \times {\rm GL}_n(\R) $.

This paper is organized as follows. Section \ref{sec:Barnes} collects the Barnes
integral identities used in the proofs. Section \ref{sec:Wh} fixes the Whittaker normalizations, 
recalls the required Mellin-Barnes representations, 
and proves the additional integral identities. Section \ref{sec:Arch_D} identifies
the even orthogonal local integral and evaluates it by induction on the rank. 
Section \ref{sec:Arch_C} specifies the symplectic local data, and evaluates the zeta integral in question by reducing it to the even orthogonal integral.

\section{Formulas for Barnes integrals}  \label{sec:Barnes}
In this section, we prepare some identities for Barnes integrals. 
In the following, $ a,b,c, \dotsc \in \C $, and the notation $\int_z $  denotes integration over a vertical line in the complex plane,
  indented if necessary, that separates the increasing and decreasing sequences of poles.
  
For $ F = \R $ or $\C$ and $ s \in \C $, we write 
\begin{align}
 \Gamma_F (s) = 
\begin{cases} \pi^{-\frac{s}{2}} \Gamma(\frac{s}{2}) & \text{if $F=\R$}, \\ 2(2\pi)^{-s} \Gamma(s)  & \text{if $F=\C$}. 
 \end{cases}\label{gamma-f}
\end{align}

\subsection{Barnes' first lemma} 

\begin{lem} \label{lem:1st} $($Barnes' first lemma \cite[(4.2.1.4)]{Sl}$)$
We have 
\begin{align*}
& \frac{1}{4\pi \sqrt{-1}} \int_z \GR(z+a) \GR(z+b) \GR(-z+c) \GR(-z+d) \,dz 
\\
&  = \frac{\GR(a+c)\GR(a+d) \GR(b+c) \GR(b+d)}{\GR(a+b+c+d)}. 
\end{align*}
\end{lem}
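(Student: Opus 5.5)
The plan is to reduce the statement to the classical form of Barnes' first lemma,
\[
\frac{1}{2\pi\sqrt{-1}}\int_w \Gamma(w+A)\Gamma(w+B)\Gamma(-w+C)\Gamma(-w+D)\,dw=\frac{\Gamma(A+C)\Gamma(A+D)\Gamma(B+C)\Gamma(B+D)}{\Gamma(A+B+C+D)},
\]
which is \cite[(4.2.1.4)]{Sl}. The reduction is a change of variables followed by bookkeeping of the powers of $\pi$.

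First I substitute $z=2w$, so that $dz=2\,dw$. Then $\GR(z+a)=\pi^{-(2w+a)/2}\Gamma(w+a/2)$, and similarly for the other three factors. The powers of $\pi$ in the integrand multiply to
\[
\pi^{-\frac{1}{2}\left((2w+a)+(2w+b)+(-2w+c)+(-2w+d)\right)}=\pi^{-(a+b+c+d)/2},
\]
which does not depend on $w$. The prefactor becomes $\frac{2}{4\pi\sqrt{-1}}=\frac{1}{2\pi\sqrt{-1}}$. The left side is therefore $\pi^{-(a+b+c+d)/2}$ times the classical Barnes integral with $A=a/2$, $B=b/2$, $C=c/2$, $D=d/2$. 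The contour condition carries over: a line separating the poles of $\GR(z+a)\GR(z+b)$ (at $z=-a-2k$) from those of $\GR(-z+c)\GR(-z+d)$ corresponds exactly to a line in $w$ separating the poles of $\Gamma(w+A)\Gamma(w+B)$ from those of $\Gamma(-w+C)\Gamma(-w+D)$.

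Applying the classical lemma gives
\[
\pi^{-(a+b+c+d)/2}\,\frac{\Gamma(\frac{a+c}{2})\Gamma(\frac{a+d}{2})\Gamma(\frac{b+c}{2})\Gamma(\frac{b+d}{2})}{\Gamma(\frac{a+b+c+d}{2})}.
\]
To finish, I rewrite each Gamma factor as a $\GR$ factor. Converting the numerator contributes a factor of $\pi^{(2a+2b+2c+2d)/2}=\pi^{a+b+c+d}$, and converting the denominator contributes a factor of $\pi^{-(a+b+c+d)/2}$. Their product is $\pi^{(a+b+c+d)/2}$, which exactly cancels the prefactor and yields the stated right-hand side.

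There is no real obstacle here. The only points needing care are the factor $2$ from $dz=2\,dw$, which accounts for the $4\pi$ in the statement, and checking that the separation of pole sequences is preserved under the rescaling. Both are immediate.
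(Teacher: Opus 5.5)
Your proposal is correct and is essentially the paper's approach: the paper gives no proof of its own and simply cites the classical form of Barnes' first lemma from Slater. Your substitution $z=2w$ supplies the routine translation into the $\GR$-normalization that this citation leaves implicit: the factor $2$ from $dz=2\,dw$ turns $\frac{1}{4\pi\sqrt{-1}}$ into $\frac{1}{2\pi\sqrt{-1}}$, the power $\pi^{-(a+b+c+d)/2}$ does not depend on $w$ and cancels against the right-hand side, and the positive rescaling preserves the separation of pole sequences.
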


\subsection{The integral $I_1(a,b,c; d,e; f) $}

\begin{lem} \label{lem:int1}
Let $I_1(a,b,c; d,e;f)$ be the Barnes integral
\begin{align*}I_1(a,b,c; d,e;f)
&= \frac{1}{4\pi \sqrt{-1}} \int_z \frac{ \GR(z+a) \GR(z+b) \GR(z+c) 
    \GR(-z+d) \GR(-z+e)}{\GR (z+f) } \,dz. 
\end{align*}
{\rm (i)}
We have 
\begin{align*}
I_1(a,b,c; d,e;f)
& = \frac{ \GR(a+d) \GR(a+e)}{\GR(f-b) \GR(f-c)} \cdot 
     I_1( d,e,f-b-c ; b,c; a+d+e).
\end{align*}
{\rm (ii)}
We have 
\begin{align*}
I_1(a,b,c;  d,e; f)
& = \frac{ \GR(a+d) \GR(b+e) \GR(c+e)}{\GR(a+b+c+d+e-f)\GR(f-b) \GR(f-c)}  
\\
& \times I_1(a, f-b-e, f-c-e; b+c+d+e-f,e; f).
\end{align*}
{\rm (iii)} {\rm (}Barnes' second lemma \cite[(4.2.2.2)]{Sl}{\rm)}
If $ a+b+c+d+e= f $, then we have 
\begin{align*}
I_1(a,b,c; d,e;f)
& = \frac{\GR(a+d)\GR(a+e) \GR(b+d)\GR(b+e)
     \GR(c+d)\GR(c+e)}{\GR(a+b+d+e)  \GR(b+c+d+e) \GR(c+a+d+e)}.
\end{align*}
\end{lem}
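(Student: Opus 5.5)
The plan is to prove (i) by expanding one gamma quotient via Barnes' first lemma, which turns $I_1$ into a double integral, and then swapping the order of integration. (ii) will follow by applying (i) several times, and (iii) will be obtained from (i) together with Barnes' first lemma.

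\textbf{Proof of (i).} Read Lemma \ref{lem:1st} backwards, with the variable $w$ replacing $z$. Taking parameters $(z+b, z+c;\ f-b-c-z,\ w\text{-free})$ is awkward, so choose the parameters so that the right side becomes
\[
\frac{\GR(z+b)\GR(z+c)}{\GR(z+f)}.
\]
That is, take $a\to z$, $b\to f-b-c$, $c\to b$, $d\to c$, with $a+b+c+d=z+f$. This gives
\[
\frac{\GR(z+b)\GR(z+c)\GR(f-c)\GR(f-b)}{\GR(z+f)}=\frac{1}{4\pi\sqrt{-1}}\int_w \GR(w+z)\GR(w+f-b-c)\GR(-w+b)\GR(-w+c)\,dw.
\]
Substitute this into $I_1(a,b,c;d,e;f)$ and interchange the $z$ and $w$ integrals; this is justified by Stirling's exponential decay on the contours. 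The inner $z$-integral is
\[
\int_z \GR(z+a)\GR(z+w)\GR(-z+d)\GR(-z+e)\,dz,
\]
which Lemma \ref{lem:1st} evaluates as
\[
\frac{\GR(a+d)\GR(a+e)\GR(w+d)\GR(w+e)}{\GR(a+w+d+e)}.
\]
What remains is $\frac{\GR(a+d)\GR(a+e)}{\GR(f-b)\GR(f-c)}$ times
\[
\frac{1}{4\pi\sqrt{-1}}\int_w \frac{\GR(w+d)\GR(w+e)\GR(w+f-b-c)\GR(-w+b)\GR(-w+c)}{\GR(w+a+d+e)}\,dw,
\]
and this is exactly $I_1(d,e,f-b-c;b,c;a+d+e)$. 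One must check that the contours can be chosen to separate the poles; this is done for generic parameters first and then extended by analytic continuation.

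\textbf{Proof of (ii).} Apply (i) twice more, each time with a suitable reordering of the symmetric arguments. $I_1$ is symmetric in its first three slots and in its two middle slots. Starting from $J=I_1(d,e,f-b-c;b,c;a+d+e)$, reorder it as $I_1(e,d,f-b-c;b,c;a+d+e)$ and apply (i) with the roles $(a,b,c;d,e;f)\to(e,d,f-b-c;b,c;a+d+e)$. Then reorder once more and apply (i) a third time so that the parameter $f$ returns to its original value. The main bookkeeping obstacle is choosing the orderings so that the composite transformation produces exactly $I_1(a,f-b-e,f-c-e;b+c+d+e-f,e;f)$ with the stated prefactor. I would verify this by tracking the parameter sums: the invariant $a+b+c+d+e-f$ is preserved up to the expected shift, and the gamma prefactors telescope, the $\GR(f-b)\GR(f-c)$ factors cancelling between steps. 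If three applications do not close up, I would try composing (i) with its own inverse under a different permutation, since (i) is an involution-like map on parameters.

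\textbf{Proof of (iii).} When $a+b+c+d+e=f$, the last parameter on the right side of (i) satisfies $f-b-c=a+d+e$. The numerator factor $\GR(w+f-b-c)$ therefore cancels the denominator $\GR(w+a+d+e)$, and the integral collapses to a Barnes-first-lemma integral in $w$:
\[
\frac{1}{4\pi\sqrt{-1}}\int_w \GR(w+d)\GR(w+e)\GR(-w+b)\GR(-w+c)\,dw=\frac{\GR(b+d)\GR(c+d)\GR(b+e)\GR(c+e)}{\GR(b+c+d+e)}.
\]
Multiply by the prefactor and use $f-b=a+c+d+e$ and $f-c=a+b+d+e$. This yields exactly the stated product, so (iii) follows with no further obstacle.
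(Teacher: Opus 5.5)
Your (i) is the paper's argument, up to $w\to -w$. Your derivation of (iii) from (i) is also correct: on $a+b+c+d+e=f$ the factor $\GR(w+f-b-c)$ cancels $\GR(w+a+d+e)$, and Barnes' first lemma finishes. The paper only cites Slater for (iii), so this is a valid self-contained alternative.

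The gap is in (ii), where your plan does not close. Your second application of (i), with the ordering $(e,d,f-b-c;b,c;a+d+e)$, is exactly right and gives
\[
I_1(a,b,c;d,e;f)=\frac{\GR(a+d)\GR(b+e)\GR(c+e)}{\GR(a+b+c+d+e-f)\GR(f-b)\GR(f-c)}\; I_1(b,c,a+b+c+e-f;\,d,f-b-c;\,b+c+e).
\]
Between the two steps it is $\GR(a+e)$ that cancels, not $\GR(f-b)\GR(f-c)$; those factors survive into the prefactor of (ii). At this point the prefactor is already the one in (ii), so a third application of (i) can only spoil it. If the distinguished first slot is $b$, $c$, or $a+b+c+e-f$, the third application does one of the following:
\begin{itemize}
\item it introduces a further nontrivial quotient, for instance $\GR(b+d)\GR(f-c)/(\GR(b+e)\GR(f-a))$ when that slot is $b$;
\item it makes the last parameter $d+f-c$ or $d+f-b$, so generically it never restores $f$;
\item or, in the third case, it simply undoes the second step.
\end{itemize}
Your claimed telescoping therefore does not happen, and the fallback of composing (i) with its inverse under other permutations is not an argument.

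The missing idea is that the last slot need not be restored by (i) at all. $I_1$ is invariant under $(a,b,c;d,e;f)\mapsto(a+t,b+t,c+t;d-t,e-t;f+t)$, which is just the translation $z\mapsto z+t$ of the integration variable. Taking $t=f-b-c-e$ sends $I_1(b,c,a+b+c+e-f;\,d,f-b-c;\,b+c+e)$ to $I_1(f-c-e,f-b-e,a;\,b+c+d+e-f,e;\,f)$. After permuting the first three slots, this is exactly the integral in (ii). So (ii) is two applications of (i) followed by this shift, as in the paper.
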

\bpf
By Lemma \ref{lem:1st}, we have
\begin{align*}
 I_1(a,b,c; d,e;f)
& = \frac{1}{ (4\pi \sqrt{-1})^2} \int_{z,w} 
   \frac{ \GR(z+a) \GR(-z+d) \GR(-z+e) } { \GR(f-b) \GR(f-c)}  \\
&  \times 
      \GR(w+b) \GR(w+c) 
      \GR(-w+z) \GR(-w+f-b-c) \,dz \,dw.
\end{align*}
We use Lemma \ref{lem:1st} for the integration over $z$ to get (i).
To get (ii), we use (i) twice: 
\begin{align*}
 I_1(a,b,c; d,e;f)
& = \frac{ \GR(a+d) \GR(b+e) \GR(c+e) }{ \GR(a+b+c+d+e-f) \GR(f-b) \GR(f-c) }
\\
& \times 
  I_1(b,c,a+b+c+e-f; d, f-b-c; b+c+e).
\end{align*} 
Then the substitution $ z \to z+f-b-c-e $ implies (ii). 
\epf

\subsection{The integral $I_2(a,b,c; d,e; f) $}

\begin{lem} \label{lem:int2} 
Let $I_2(a,b,c; d,e;f)$ be the Barnes integral
\begin{align*}I_2(a,b,c; d,e;f)=
  \frac{1}{4\pi \sqrt{-1}} \int_z 
 \frac{ \GR(z+a) \GR(z+b) \GR(2z+c) \GR(-z+d) \GR(-z+e)} 
  {\GR(2z+f)} \,dz.
\end{align*}
If $ a+b +c + d +e =f $, then we have 
\begin{align*}
 I_2(a,b,c; d,e; f) 
& = \frac{ \GR(a+d) \GR(a+e)}{ \GR(f-c) \GR(c+d+e) } 
\cdot 
  I_1(b, c+d, c+e; d,e; f-b). 
\end{align*}
\end{lem}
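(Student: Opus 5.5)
The plan is to follow the method of Lemma \ref{lem:int1}. I will trade the ``doubled'' gamma factors $\GR(2z+c)/\GR(2z+f)$ for an auxiliary Barnes integral in a second variable $w$, using Lemma \ref{lem:1st}. Then I interchange the $z$- and $w$-integrations, evaluate the inner $z$-integral in closed form (again by Lemma \ref{lem:1st}, or by Lemma \ref{lem:int1}(iii)), and finally recognise the surviving $w$-integral as $I_1(b,c+d,c+e;d,e;f-b)$. If needed, this last identification goes after one application of Lemma \ref{lem:int1}(i) or (ii).

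The basic device is Lemma \ref{lem:1st} read with $z$-dependent parameters. Taking the four parameters to be $z+p$, $\beta$, $z+r$, $\delta$ gives
\[
\frac{1}{4\pi\sqrt{-1}}\int_w \GR(w+z+p)\GR(w+\beta)\GR(-w+z+r)\GR(-w+\delta)\,dw
=\frac{\GR(2z+p+r)\GR(z+p+\delta)\GR(z+r+\beta)\GR(\beta+\delta)}{\GR(2z+p+r+\beta+\delta)} .
\]
The right-hand side has exactly the shape $\GR(2z+c)/\GR(2z+f)$ times two single-$z$ gammas. This suggests imposing $p+r=c$ and $\beta+\delta=f-c$, and matching $\GR(z+p+\delta)$ with $\GR(z+a)$. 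With these choices, the $z$-integrand becomes a product of four gammas, $\GR(w+z+p)\GR(-w+z+r)$ paired with $\GR(-z+d)\GR(-z+e)$, together with one leftover single-$z$ gamma. That leftover factor, $\GR(z+b)$ divided by $\GR(z+r+\beta)$, is what must be absorbed.

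Matching all parameters at once would force $f=a+b$, which does not hold in general. So I expect the actual decomposition must be chosen more cleverly. First, I will keep $\GR(z+b)$ in the $z$-integrand and evaluate the resulting $3+2$ integral over $z$ by Lemma \ref{lem:int1}(iii). The balancing hypothesis $a+b+c+d+e=f$ is exactly what should make Barnes' second lemma applicable to that inner integral. Second, if that does not work, I will start from the right-hand side instead: write $I_1(b,c+d,c+e;d,e;f-b)$ as a $w$-integral and expand, via Lemma \ref{lem:1st}, the factor $\GR(w+c+d)\GR(w+c+e)/\GR(w+f-b)$, the point being that $f-b=a+c+d+e$. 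This produces the prefactor $\GR(a+d)\GR(a+e)/(\GR(f-c)\GR(c+d+e))$ naturally. Note that $f-c=a+b+d+e$, so the factor $1/\GR(f-c)$ is the denominator of a Barnes first lemma in the variables $a,b,d,e$.

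The main obstacle is choosing the auxiliary parameters so that, after interchange, both one-dimensional integrals are of the admissible types: four-gamma (Lemma \ref{lem:1st}) or balanced $3+2$ over $1$ (Lemma \ref{lem:int1}(iii)). The balancing condition must be used precisely there. Once the correct splitting is found, the remaining steps are bookkeeping: shifting the contour variable, as in the last step of the proof of Lemma \ref{lem:int1}, and collecting gamma factors. Convergence and the legitimacy of the contour interchanges follow from Stirling's formula for generic parameters, and the general case follows by analytic continuation.
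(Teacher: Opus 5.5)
Your first option is correct and is essentially the paper's proof. Barnes' first lemma unfolds $\GR(2z+c)/\GR(2z+f)$ into a $w$-integral, the hypothesis $a+b+c+d+e=f$ is exactly what makes the inner $z$-integral balanced so that Barnes' second lemma applies, and after one gamma factor in $w$ cancels, the surviving $w$-integral is of type $I_1$. The only difference is cosmetic: the paper absorbs $\GR(z+b)$, using the $w$-integrand $\GR(w+b)\GR(w+z+c)\GR(-w+z)\GR(-w+f-b-c)$, and so lands directly on $I_1(b,c+d,c+e;d,e;f-b)$; absorbing $\GR(z+a)$ as you propose instead gives the mirror form $\frac{\GR(b+d)\GR(b+e)}{\GR(f-c)\GR(c+d+e)}\, I_1(a,c+d,c+e;d,e;f-a)$, which becomes the stated formula either by the evident symmetry of $I_2$ in $a,b$ or by one application of Lemma \ref{lem:int1}(i) plus a shift of the contour variable, as you anticipated.
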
 

\bpf 
Lemma \ref{lem:1st} implies that
\begin{align*}
 \frac{ \GR(z+b) \GR(2z+c)}{ \GR(2z+f) } 
& =  \frac{1}{ \GR(f-c) \GR(z+f-b)} \cdot  
\frac{1}{4\pi \sqrt{-1}} \int_w \GR(z-w) 
\\
& \times  \GR(-w+f-b-c) \GR(w+b) \GR(z+w+c) \,dw.
\end{align*}
Then we have 
\begin{align*}
 I_2(a,b,c; d,e;f)
& = \frac{1}{\GR(f-c)} \cdot \frac{1}{(4\pi \sqrt{-1})^2} \int_{z,w} 
   \GR(z+a) \GR(-z+d) \GR(-z+e)  
\\
& \times \frac{  \GR(z-w) \GR(-w+f-b-c) \GR(w+b) \GR(z+w+c)} 
  {\GR(z+f-b)} \,dz \,dw.
\end{align*}
Since $ a+b +c + d +e =f $, we can use Barnes' second lemma for $ \int_z $ to get 
\begin{align*}
 I_2(a,b,c; d,e;f)
& = \frac{\GR(a+d) \GR(a+e)  }{\GR(f-c) \GR(c+d+e) } \cdot 
 \frac{1}{4\pi \sqrt{-1}} \int_{w} 
    \GR(-w+d) \GR(-w+e)  
\\
& \times \frac{  \GR( w+c+d) \GR( w+c+e) \GR(w+b)} 
  { \GR(w+f-b) }   \,dw,
\end{align*}
as desired.
\epf

\subsection{The integral $ I_3(a,b,c,d; \alpha,\beta,\gamma,\delta) $}

The next formula is due to the second author \cite{St}. 
See also \cite[Lemma 1.5]{I19} for an alternative proof by using Lemma \ref{lem:int1} (i).

\begin{lem} \label{lem:int3} $($\cite[Lemma 2.1]{St}$)$ 
Let 
$ I_3(a,b,c,d; \alpha,\beta,\gamma,\delta) $ be the Barnes integral
\begin{align*}&I_3(a,b,c,d; \alpha,\beta,\gamma,\delta)\\&=
 \frac{1}{4\pi \sqrt{-1}} \int_z \frac{\GR (z+a)\GR (z+b) \GR (z+c)
  \GR (-z+\alpha) \GR (-z+\beta) \GR (-z+\gamma)}{ \GR (z+d) \GR (-z+\delta) } \,dz. 
\end{align*}
If $ a+b +c + \alpha + \beta + \gamma = d +\delta $, then we have 
\begin{align*}
I_3(a,b,c,d; \alpha,\beta,\gamma,\delta) 
&  =\frac{ \GR(a+\gamma)\GR(b+\gamma) \GR(\alpha+c ) \GR(\beta+c) }{ 
 \GR(d-a)\GR(d-b) \GR(\delta-\alpha) \GR(\delta-\beta)} \\
& \times I_3(a,b,\delta-\alpha-\beta,a+b+\gamma; \alpha,\beta,d-a-b, \alpha+\beta+c).
\end{align*}  
\end{lem}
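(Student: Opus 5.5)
We follow the route suggested by \cite[Lemma 1.5]{I19}: expand part of the integrand as a Barnes integral via Lemma \ref{lem:1st}, interchange the order of integration, and transform the inner integral with Lemma \ref{lem:int1}~(i).

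\emph{Step 1: open the $z$-dependence on the left.} Apply Lemma \ref{lem:1st} with parameters $(a,b;z,d-a-b)$. This gives
\[
 \frac{\GR(z+a)\GR(z+b)}{\GR(z+d)}=\frac{1}{\GR(d-a)\GR(d-b)}\cdot\frac{1}{4\pi\sqrt{-1}}\int_w \GR(w+a)\GR(w+b)\GR(z-w)\GR(-w+d-a-b)\,dw .
\]
Substituting this into $I_3$ and interchanging the $z$- and $w$-integrals (justified by Stirling's formula on suitably indented contours), the inner $z$-integral becomes
\[
 \int_z \frac{\GR(z-w)\GR(z+c)\GR(-z+\alpha)\GR(-z+\beta)\GR(-z+\gamma)}{\GR(-z+\delta)}\,dz .
\]
After $z\mapsto -z$ this is $I_1(\gamma,\alpha,\beta;-w,c;\delta)$.

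\emph{Step 2: transform the inner integral.} Apply Lemma \ref{lem:int1}~(i) to it, singling out $\gamma$. This produces the prefactor
\[
 \frac{\GR(\gamma-w)\GR(\gamma+c)}{\GR(\delta-\alpha)\GR(\delta-\beta)}
\]
times an integral over a new variable $u$ with integrand
\[
 \frac{\GR(u-w)\GR(u+c)\GR(u+\delta-\alpha-\beta)\GR(-u+\alpha)\GR(-u+\beta)}{\GR(u+\gamma+c-w)}.
\]
Already the factors $\GR(\gamma+c)$, $\GR(\delta-\alpha)\GR(\delta-\beta)$ and $\GR(u+\delta-\alpha-\beta)\GR(-u+\alpha)\GR(-u+\beta)$ match the target formula.

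\emph{Step 3: collapse the $w$-integral.} Swap again so that $w$ is innermost. The $w$-integrand is
\[
 \frac{\GR(w+a)\GR(w+b)\GR(-w+d-a-b)\GR(-w+\gamma)\GR(-w+u)}{\GR(-w+u+\gamma+c)},
\]
which is an $I_1$ (after $w\mapsto -w$) with upper parameters $d-a-b,\gamma,u$. Apply Lemma \ref{lem:int1}~(i) once more, now singling out $u$ or $\gamma$ so as to release $\GR(a+\gamma)\GR(b+\gamma)$. Then use the hypothesis $a+b+c+\alpha+\beta+\gamma=d+\delta$ to rewrite the parameters. The aim is that the remaining double integral collapses by Lemma \ref{lem:1st}, leaving a single $u$-integral. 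That integral should be, after a shift of $u$, exactly
\[
 I_3(a,b,\delta-\alpha-\beta,a+b+\gamma;\alpha,\beta,d-a-b,\alpha+\beta+c).
\]
The factors $\GR(\alpha+c)\GR(\beta+c)$ would arise here from the new denominator $\GR(z+a+b+\gamma)$ paired against the $\GR(-u+\alpha)\GR(-u+\beta)$ terms.

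\emph{Main obstacle.} Steps 1 and 2 are mechanical. The real difficulty is in Step 3: choosing which parameter to single out in the second application of Lemma \ref{lem:int1}~(i), and which pair to recombine by Barnes' first lemma, so that the balancing condition is used exactly once and the remaining kernel has the six-numerator, two-denominator shape of $I_3$. If this ordering fails, we would instead try Lemma \ref{lem:int1}~(ii) at that step. As a last resort we would use a symmetric check, namely comparing residue series on both sides, to identify the correct ordering.
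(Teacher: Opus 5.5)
Your route (Barnes' first lemma, then Lemma \ref{lem:int1}~(i) twice, then Barnes' first lemma again) is the one the paper points to. The paper gives no proof of its own: it cites \cite[Lemma 2.1]{St} and refers to \cite[Lemma 1.5]{I19} for a proof via Lemma \ref{lem:int1}~(i). Your Steps 1 and 2 are computed correctly. Step 3 does close, but you left it as a hedge, and some of your bookkeeping remarks are wrong. Here is the completion.

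In Step 3, single out $\gamma$. After $w\mapsto -w$ the inner integral is $I_1(\gamma,d-a-b,u;\,a,b;\,u+\gamma+c)$, and Lemma \ref{lem:int1}~(i) gives
\[
\frac{\GR(a+\gamma)\GR(b+\gamma)}{\GR(u+a+b+c+\gamma-d)\,\GR(\gamma+c)}\; I_1(a,b,a+b+c+\gamma-d;\,d-a-b,u;\,a+b+\gamma).
\]
The balancing condition is used exactly here, in the form $a+b+c+\gamma-d=\delta-\alpha-\beta$. It has two consequences:
\begin{enumerate}
\item the new denominator $\GR(u+\delta-\alpha-\beta)$ cancels the numerator factor $\GR(u+\delta-\alpha-\beta)$ from Step 2;
\item $\GR(\gamma+c)$ cancels the Step 2 prefactor.
\end{enumerate}
Call the new variable $v$. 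After swapping the order of integration, the only $u$-dependence left is $\GR(u+c)\GR(u-v)\GR(-u+\alpha)\GR(-u+\beta)$. Barnes' first lemma then gives
\[
\frac{1}{4\pi\sqrt{-1}}\int_u \GR(u+c)\GR(u-v)\GR(-u+\alpha)\GR(-u+\beta)\,du=\frac{\GR(\alpha+c)\GR(\beta+c)\GR(\alpha-v)\GR(\beta-v)}{\GR(\alpha+\beta+c-v)}.
\]
The surviving $v$-integrand is then exactly that of $I_3(a,b,\delta-\alpha-\beta,a+b+\gamma;\,\alpha,\beta,d-a-b,\alpha+\beta+c)$, with no shift needed. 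The prefactor is the stated one.

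Please correct the following points in your commentary.
\begin{enumerate}
\item $\GR(\gamma+c)$ does not appear in the target; it cancels as shown above.
\item The Step 2 factor $\GR(u+\delta-\alpha-\beta)$ does not survive: it cancels. The target's $\GR(z+\delta-\alpha-\beta)$ is a new factor, $\GR(v+a+b+c+\gamma-d)$, produced by the second application of Lemma \ref{lem:int1}~(i).
\item The factors $\GR(-u+\alpha)\GR(-u+\beta)$ are consumed by the final Barnes integral and reappear as $\GR(\alpha-v)\GR(\beta-v)$.
\item $\GR(\alpha+c)\GR(\beta+c)$ come from pairing $\GR(u+c)$ with $\GR(-u+\alpha)\GR(-u+\beta)$ in that final Barnes integral. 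They do not come from the denominator $\GR(v+a+b+\gamma)$, which is created by the second application of Lemma \ref{lem:int1}~(i).
\end{enumerate}
The fallbacks via Lemma \ref{lem:int1}~(ii) or comparison of residue series are unnecessary.
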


\subsection{The integral $ I_4(a,b,c,d; \alpha,\beta,\gamma,\delta) $}

\begin{lem} \label{lem:int4}
Let 
$ I_4(a,b,c,d; \alpha,\beta,\gamma,\delta) $ be the Barnes integral 
\begin{align*}&I_4(a,b,c,d; \alpha,\beta,\gamma,\delta)\\&=
 \frac{1}{4\pi \sqrt{-1}} \int_z \frac{\GR (z+a)\GR (z+b) \GR (2z+c)
  \GR (-z+\alpha) \GR (-z+\beta) \GR (-z+\gamma)}{ \GR (2z+d) \GR (-z+\delta) } \,dz. 
\end{align*}
If $ a+b +c + \alpha + \beta + \gamma = d +\delta $, then we have 
\begin{align*}
&  I_4(a,b,c,d; \alpha,\beta,\gamma,\delta) \\
& = \frac{\GR(a+\gamma) }{ \GF(d-c) \GR(d-a-b) \GR(\delta-\alpha) \GR(\delta-\beta) } 
\\
& \times   \frac{1}{(4\pi \sqrt{-1})^2} \int_{z,w} 
     \GF(w+b) \GR(w+c+\gamma)\GR(-w+\alpha ) \GR(-w+\beta) \GR(z+w+c)\\
& \times
 \frac{ \GR(z+a) \GR(z+\delta-\alpha-\beta) 
  \GR(-z+\alpha) \GR(-z+\beta) \GR(-z-w+d-a-b-c)  } 
 {\GR(z+w+a+c+\gamma) \GR(-z-w+\alpha+\beta)}\,dz\, dw.
\end{align*}
\end{lem}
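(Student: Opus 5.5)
The plan is to imitate the proof of Lemma \ref{lem:int2}: open up the factor $\GR(z+b)\GR(2z+c)/\GR(2z+d)$ as a Barnes integral in a new variable $w$, and then apply Stade's transformation (Lemma \ref{lem:int3}) to the resulting inner integral over $z$.

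\emph{Step 1: auxiliary representation.} Apply Lemma \ref{lem:1st} to the four gamma factors $\GR(w+b)$, $\GR(w+z+c)$, $\GR(-w+z)$ and $\GR(-w+d-b-c)$. This gives
\begin{align*}
\frac{\GR(z+b)\GR(2z+c)}{\GR(2z+d)}
&= \frac{1}{\GR(d-c)\GR(z+d-b)}\cdot\frac{1}{4\pi\sqrt{-1}}\int_w \GR(w+b)\GR(z+w+c)\\
&\quad\times\GR(z-w)\GR(-w+d-b-c)\,dw.
\end{align*}
This is the same identity used in the proof of Lemma \ref{lem:int2}, with $f$ replaced by $d$. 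Substituting it into the definition of $I_4$ and interchanging the order of integration, we are left with a $w$-integral of
\[
\frac{\GR(w+b)\GR(-w+d-b-c)}{\GR(d-c)}
\]
times an inner $z$-integral.

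\emph{Step 2: identify the inner integral.} The inner integral is $I_3(a,\,w+c,\,-w,\,d-b;\ \alpha,\beta,\gamma,\delta)$. Its balancing condition reads
\[
a+(w+c)+(-w)+\alpha+\beta+\gamma=d-b+\delta,
\]
which is exactly the hypothesis $a+b+c+\alpha+\beta+\gamma=d+\delta$. We can therefore apply Lemma \ref{lem:int3}, taking the ``$b$'' slot to be $w+c$ and the ``$c$'' slot to be $-w$.

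\emph{Step 3: collect terms.} Lemma \ref{lem:int3} produces the prefactor
\[
\frac{\GR(a+\gamma)\GR(w+c+\gamma)\GR(\alpha-w)\GR(\beta-w)}{\GR(d-a-b)\GR(d-b-c-w)\GR(\delta-\alpha)\GR(\delta-\beta)}.
\]
It also produces the new integral
\[
I_3(a,\,w+c,\,\delta-\alpha-\beta,\,a+c+\gamma+w;\ \alpha,\beta,\,d-a-b-c-w,\,\alpha+\beta-w).
\]
The factor $\GR(d-b-c-w)$ in the denominator cancels against $\GR(-w+d-b-c)$ from Step 1. After this cancellation, writing out the new $I_3$ gives precisely the double integrand and the constant $\GR(a+\gamma)/(\GR(d-c)\GR(d-a-b)\GR(\delta-\alpha)\GR(\delta-\beta))$ in the statement.

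\emph{Main obstacle.} The algebra above is mechanical. The real work is justifying the analysis:
\begin{itemize}
\item the interchange of the $z$- and $w$-integrations;
\item the choice of contours, so that the increasing and decreasing pole sequences stay separated both in Step 1 (uniformly in $z$ on its contour) and in the application of Lemma \ref{lem:int3} (uniformly in $w$).
\end{itemize}
I would first treat parameters in an open set where straight vertical contours work and Stirling's formula gives absolute convergence of the double integral; exponential decay in $|\mathrm{Im}\,z|+|\mathrm{Im}\,w|$ follows since the total gamma ``weight'' is positive. Fubini then justifies the interchange, and the general case follows by analytic continuation in the parameters.
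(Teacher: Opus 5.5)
Your proposal is correct and follows essentially the same route as the paper's proof: the same Barnes-first-lemma representation of $\GR(z+b)\GR(2z+c)/\GR(2z+d)$, then Lemma \ref{lem:int3} applied to the inner $z$-integral, then cancellation of $\GR(-w+d-b-c)$. The only cosmetic difference is that the paper writes the inner integral as $I_3(w+c,a,-w,d-b;\alpha,\beta,\gamma,\delta)$, putting $w+c$ in the first slot. This is immaterial, because the transformation in Lemma \ref{lem:int3} is symmetric in its first two slots.
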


\bpf
By Lemma \ref{lem:1st}, we know that
\begin{align*}
 \frac{ \GR(z+b) \GR(2z+c)}{ \GR(2z+d) } 
& =  \frac{1}{ \GR(d-c) \GR(z+d-b)} \cdot  
\frac{1}{4\pi \sqrt{-1}} \int_w \GR(z-w) 
\\
& \times  \GR(-w+d-b-c) \GR(w+b) \GR(z+w+c) \,dw.
\end{align*}
Then we have 
\begin{align*}
& \GR(d-c) \cdot I_4(a,b,c,d; \alpha,\beta,\gamma,\delta)  \\
& =  \frac{1}{(4\pi \sqrt{-1})^2} \int_{z,w} 
   \GR(z+a) \GR(-z+\alpha) \GR(-z+\beta) \GF(-z+\gamma)
\\
& \times \frac{  \GR(z-w) \GR(-w+d-b-c) \GR(w+b) \GR(z+w+c)} 
  {\GR(z+d-b) \GF(-z+\delta) } \,dz\, dw
\\
& = \frac{1}{4\pi \sqrt{-1}} \int_{w}
    I_3(w+c,a,-w, d-b; \alpha, \beta, \gamma, \delta)\GF(-w+d-b-c) \GF(w+b)  \,dw.
\end{align*}
Because of the assumption $ a+b +c + \alpha + \beta + \gamma = d +\delta $, we can use Lemma \ref{lem:int3}:
\begin{align*}
& I_3(w+c,a,-w, d-b; \alpha, \beta, \gamma, \delta)  \\
& = 
 \frac{ \GR(a+\gamma) }{ \GR(d-a-b) \GR(\delta-\alpha) \GR(\delta-\beta) }
\\
& \times 
\frac{1}{4\pi \sqrt{-1}} \int_{z} \frac{ \GR(w+c+\gamma)\GR(-w+\alpha ) \GR(-w+\beta) }{ 
 \GR(-w+d-b-c)} \cdot \GR(z+w+c) \GR(z+a) \\
& \times 
 \frac{\GR(z+\delta-\alpha-\beta) 
  \GR(-z+\alpha) \GR(-z+\beta) \GR(-z-w+d-a-b-c)  } 
 {\GR(z+w+a+c+\gamma) \GR(-z-w+\alpha+\beta)}\,dz,
\end{align*}
to get the assertion.
\epf

\section{Explicit formulas for Whittaker functions}  \label{sec:Wh}
Throughout this section, all multiple Barnes integrals are initially taken over vertical contours for which the increasing and decreasing sequences of poles are separated, and such that the integrals converge absolutely. The parameters are first restricted to a nonempty domain, on compact subsets of which the integrals converge uniformly, and  in which all changes in the order of integration and all changes of variables are justified. The resulting identities are then extended meromorphically to  parameters with arbitrary values (subject to any stipulated  relations among the parameters). Empty products are understood to be equal to 1.

\subsection{Spherical Whittaker functions} \label{subsec:classoneWh} 

Let $G $ be the group ${\rm GL}_n(\R)$, $ {\rm Sp}_{2n}(\R) $ or $ {\rm SO}_{2n} (\R) $.
Here 
${\rm Sp}_{2n}(\R) $ is the symplectic group with respect to 
$ \Bigl(\begin{smallmatrix} & J_n \\ -J_n & \end{smallmatrix}\Bigr)$, and
$ {\rm SO}_{2n}(\R)= {\rm SO}_{n,n}(\R)  $ is the split special orthogonal group with respect to 
$ \Bigl(\begin{smallmatrix} & J_n \\ J_n & \end{smallmatrix}\Bigr)$, 
where $ J_n $ is the anti-diagonal matrix $ \left( \begin{smallmatrix}
& & 1 \\ & \rotatebox[origin=c]{45}{$\cdots$} & \\ 1 & &
\end{smallmatrix} \right) $ of size $n$.

Fix a maximal compact subgroup $K$ of $G $ by 
\begin{align*}
K  = \begin{cases}
{\rm O}(n) & \mbox{ if } G  = {\rm GL}_n(\R), \\
G  \cap {\rm O}(2n) & \mbox{ if } G = {\rm Sp}_{2n}(\R), {\rm SO}_{2n}(\R),
\end{cases}
\end{align*}
where $ {\rm O}(n) =  \{ g \in {\rm  GL}_n(\R) \mid {}^tg g = 1 \} $.
Denote by $U$ the maximal unipotent subgroup of $G $ that consists of
upper triangular unipotent matrices in $G$.
Let $A $ be the subgroup of $G$ defined by 
\begin{align*}
A 
= \begin{cases}
 \{ \alpha[a_1,\dotsc, a_n] = {\rm diag}(a_1,\dotsc,a_n)   \mid a_i > 0 \} & \mbox{ if } G = {\rm GL}_n(\R), \\
 \{ \beta[a_1, \dotsc, a_n] = {\rm diag}(a_1,\dotsc,a_n, a_n^{-1},\dotsc,a_1^{-1}) \mid a_i>0 \}  & \mbox{ if } G= {\rm Sp}_{2n}(\R)\hbox{ or }{\rm SO}_{2n}(\R).
\end{cases}
\end{align*}
Then the Iwasawa decomposition $ G = UAK $ holds.

Let $ B $ be a Borel subgroup of $G$ with Langlands decomposition $B = MAU$,
where
\begin{align*}
M= \begin{cases}
     \{ \alpha [ \varepsilon_1,\dotsc,\varepsilon_n ] \mid \varepsilon_i \in \{\pm 1\} \} & \mbox{ if } G = {\rm GL}_n(\R), \\
 \{ \beta [ \varepsilon_1,\dotsc,\varepsilon_n ] \mid \varepsilon_i \in \{\pm 1\} \} & \mbox{ if } G= {\rm Sp}_{2n}(\R)\hbox{ or }{\rm SO}_{2n}(\R). \\
   \end{cases}
\end{align*}
For $ \nu = (\nu_1,\dotsc, \nu_n) \in \C^n $, define the quasi-character $e^{\nu} $ on $ A $  by 
\[
e^{\nu} (   \alpha[ a_1,\dotsc,a_n ] ) = \prod_{i=1}^n a_i^{\nu_i}, \qquad 
 e^{\nu} (   \beta[ a_1,\dotsc,a_n ] ) = \prod_{i=1}^n a_i^{\nu_i}
\]
and extend it to a character on $B$ by 
\[
   e^{\nu} (mau) =e^{\nu}(a),  \qquad  \forall (m,a,u) \in M\times A \times U.
\]
Let
\begin{align*}
 \rho = 
\begin{cases}
 (\tfrac{n-1}{2}, \tfrac{n-3}{2}, \dotsc, \tfrac{1-n}{2}) & \text{if $ G= {\rm GL}_n(\R)$}, \\
 (n-1, n-2, \dotsc, 0) & \text{if $ G= {\rm SO}_{2n}(\R) $}, \\
  (n, n-1, \dotsc, 1) & \text{if $ G=  {\rm Sp}_{2n}(\R)$}.
\end{cases}
\end{align*}
The (unnormalized) induced representation 
\[
 \pi_{\nu} = {\rm Ind}_B^G(1_M \otimes e^{\nu+\rho} \otimes 1_U) 
\] is called the spherical principal series representation of $G$.

\medskip 

To introduce the notion of a Whittaker function, we define a nondegenerate unitary character $ \psi $ on $U$ by
\begin{align*}
\psi(u)  = \begin{cases}
 \exp \{ 2\pi \sqrt{-1} (u_{1,2}+ \cdots + u_{n-1,n}) \} & \text{if $ G = {\rm GL}_n(\R)$}, \\
 \exp \{ 2\pi \sqrt{-1} (u_{1,2}+ \cdots + u_{n-1,n} + u_{n,n+1}) \} & \text{if $ G= {\rm Sp}_{2n}(\R)$}, \\
 \exp \{ 2\pi \sqrt{-1} (u_{1,2}+ \cdots + u_{n-1,n} + u_{n-1,n+1}) \} & \text{if $ G= {\rm SO}_{2n}(\R) $}
\end{cases}
\end{align*}
for $ u = (u_{i,j}) \in U $.

Let $W(\psi)$ be the space of smooth functions $w\colon G\to\mathbb{C}$
satisfying
\[
  w(ug)=\psi(u)w(g),  \qquad (u,g)\in U\times G.
\]
We equip $W(\psi)$ with the right regular action of $G$, defined by
\[
  (R(h)w)(g)=w(gh),  \qquad g,h\in G.
\]
Let $\C_{\psi}$ denote the one-dimensional representation of $U$
on which $u\in U$ acts by multiplication by $\psi(u)$. 
A $\psi$-Whittaker functional on $\pi_{\nu}$ is a nonzero element
$  \Lambda_{\nu}\in
  \operatorname{Hom}_{U} (\pi_{\nu}, {\C}_{\psi}), 
$
thus,
\[
  \Lambda_{\nu}\bigl(\pi_{\nu}(u)v\bigr)  =\psi(u)\Lambda_{\nu}(v),
  \qquad u\in U,\quad v\in\pi_{\nu}.
\]
For those $\nu$ for which $\pi_{\nu}$ is irreducible and generic, the
multiplicity-one theorem for Whittaker models gives
\[
  \dim \operatorname{Hom}_{U} (\pi_{\nu}, \C_{\psi})=1.
\]

Fix a nonzero $\psi$-Whittaker functional $\Lambda_{\nu}$. For
$v\in\pi_{\nu}$, define
\[
  W_v(g)=\Lambda_{\nu} (\pi_{\nu}(g)v),  \qquad g\in G.
\]
Then
\[
  W_v(ug)=\psi(u)W_v(g),
\]
so that $W_v\in W(\psi)$. Moreover,
\[
  R(h)W_v=W_{\pi_{\nu}(h)v},  \qquad h\in G,
\]
and hence the map
\[
  \pi_{\nu} \to W(\psi),
  \qquad v\mapsto W_v,
\]
is a $G$-intertwining map. We denote its image by
\[
  W(\pi_{\nu},\psi)
\]
and call it the Whittaker model of $\pi_{\nu}$. By the multiplicity-one
theorem, the functional $\Lambda_{\nu}$, and consequently all the functions
$W_v$, are uniquely determined up to a common nonzero scalar.

Since $\pi_{\nu}$ is a spherical representation, the space
$\pi_{\nu}^{K}$ of $K$-fixed vectors is one-dimensional. Take a nonzero vector $ v_0 \in \pi_{\nu}^K$.
The Whittaker function $W_{v_0}$ attached to $v_0$ is called the spherical
Whittaker function associated with $\pi_{\nu}$. 
We fix its overall scalar by the Mellin-Barnes formulas given in Propositions \ref{prop:exp_GLn}, \ref{prop:exp_SO2n}, and \ref{prop:exp_Sp2n}.

Since the spherical Whittaker function $W_{v_0}$ on $G$ is a right $K$-invariant function,
the Iwasawa decomposition implies that
$ W_{v_0} $ is determined by its restriction $W_{v_0}|_A $  to $A$.  We call this restriction  the {\it radial part} of $W_{v_0}$.

\medskip 

From now on we use $ \mu = (\mu_1,\dotsc,\mu_n) \in \C^n $ for the parameter of the spherical principal series representation of 
$ {\rm GL}_n(\R) $. 
In \cite{IS07}, \cite{I25a}, \cite{I25b}, Mellin-Barnes integral representations of the radial parts of the spherical Whittaker functions 
are given. Denote by $ W_{n,\mu}^A  $, $ W_{n,\nu}^C $ and $ W_{n,\nu}^D $ the spherical Whittaker functions 
on $ {\rm GL}_n(\R) $, ${\rm Sp}_{2n}(\R)$ and ${\rm SO}_{2n}(\R)$, respectively.  
We define $ \rho $-shifted Whittaker functions  
$ \widetilde{W}_{n,\mu}^A  $, $ \widetilde{W}_{n,\nu}^C $ and $ \widetilde{W}_{n,\nu}^D $ on $ A $ by 
\begin{align*}
 \widetilde{W}_{n,\mu}^A (a) &= e^{-\rho}(a) {W}_{n,\mu}^A(a), & a = \alpha[a_1,\dotsc,a_n] \in A, \\
 \widetilde{W}_{n,\nu}^C (a) &= e^{-\rho}(a) {W}_{n,\nu}^C(a), & a = \beta[a_1,\dotsc,a_n] \in A, \\
 \widetilde{W}_{n,\nu}^D (a) &= e^{-\rho}(a) {W}_{n,\nu}^D(a), & a = \beta[a_1,\dotsc,a_n] \in A,
\end{align*}
respectively.
Their Mellin-Barnes integral representations take the following forms:
\begin{align*} 
& \widetilde{W}_{n,\mu}^A (a) 
 = \frac{  a_n^{\mu_1+\cdots+\mu_n } }{ (2\pi \sqrt{-1})^{n-1}} \int_{s_1,\dotsc, s_{n-1}} V_{{n},\mu}^A(s_1,\dotsc,s_{n-1}) 
  \prod_{i=1}^{n-1} \Bigl( \frac{a_i}{a_{i+1}} \Bigr)^{-s_i}  \prod_{i=1}^{n-1} ds_i,
\end{align*}
for $ a =  \alpha[a_1,\dotsc, a_n] \in A $, 
and 
\begin{align*}
& \widetilde{W}_{n,\nu}^C (a)  
 = \frac{  1 }{ (2\pi \sqrt{-1})^n} \int_{s_1,\dotsc, s_n} V_{n,\nu}^C(s_1,\dotsc,s_n) 
  \prod_{i=1}^{n-1} \Bigl( \frac{a_i}{a_{i+1}} \Bigr)^{-s_i} \cdot  (a_n^2)^{-s_n} \prod_{i=1}^n ds_i,
\\
& \widetilde{W}_{n,\nu}^D (a) 
 = \frac{ 1 }{ (2\pi \sqrt{-1})^{n}} \int_{s_1,\dotsc, s_n} V_{n,\nu}^D(s_1,\dotsc,s_n) 
  \prod_{i=1}^{n-1} \Bigl( \frac{a_i}{a_{i+1}} \Bigr)^{-s_i} \cdot (a_{n-1}a_n)^{-s_n} \prod_{i=1}^n ds_i
\end{align*}
for $ a = \beta[a_1,\dotsc, a_n]   \in A $.
In the next two subsections, 
we give explicit formulas for the Mellin-Barnes kernels $V_{n,\mu}^A $, $V_{n,\nu}^C $, and $V_{n,\nu}^D$.

\subsection{Spherical Whittaker functions on ${\rm GL}_n(\R)$} \label{subsec:GLn_Wh}

In this subsection, we first recall explicit formulas for the spherical Whittaker functions on ${\rm GL}_n(\R)$, as 
given by the authors in \cite{IS07}. 

For $ \mu = (\mu_1,\dotsc, \mu_n) \in \C^n $, we set $ |\mu| = \sum_{i=1}^{n} \mu_i $.

\begin{prop} \label{prop:exp_GLn} $($\cite[Theorem 12]{IS07}, \cite[Proposition 2.4]{IS13}$)$
Let $ n \ge 3 $.
For $ 1 \le k \le n $, we have 
\begin{align*}
 V_{n,\mu}^A(s_1,\dotsc,s_{n-1})
& = \frac{2^{-1}}{ (4\pi \sqrt{-1})^{n-2}} \int_{t_1,\dotsc, t_{n-2}} V_{n-1,(\mu_1,\dotsc,\mu_{k-1}, \mu_{k+1}, \dotsc, \mu_n) }^A 
   (t_1,\dotsc,t_{n-2}) \\
& \times  \prod_{i=1}^{n-2} \GR(s_i-t_i) \cdot \GR(s_{n-1}+|\mu|-\mu_k) \prod_{i=1}^{n-1} \GR(s_i-t_{i-1}+\mu_k) 
   \, \prod_{i=1}^{n-2} dt_i,
\end{align*}
where we understand that $ t_0 = 0 $
and $ V_{2,\mu}^A(s_1) = 2^{-1} \GR(s_1+\mu_1) \GR(s_1+\mu_2). $
\end{prop}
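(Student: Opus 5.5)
The plan is to derive the statement for arbitrary $k$ from the single case $k=n$ (equivalently $k=1$), which is the recursive formula of \cite[Theorem 12]{IS07}, by using that $V^A_{n,\mu}$ is symmetric in $(\mu_1,\dotsc,\mu_n)$. Suppose this symmetry is known. Let $\sigma$ be a permutation that moves $\mu_k$ to the last position and keeps the order of the other entries. Apply the $k=n$ recursion to $\sigma\mu$. The parameter of $V^A_{n-1}$ that appears is then exactly $(\mu_1,\dotsc,\mu_{k-1},\mu_{k+1},\dotsc,\mu_n)$. The gamma factors become $\GR(s_{n-1}+|\mu|-\mu_k)$ and $\GR(s_i-t_{i-1}+\mu_k)$, because $|\mu|$ is unchanged by $\sigma$. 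Since $V^A_{n,\sigma\mu}=V^A_{n,\mu}$, this is the asserted formula. So everything reduces to (a) the case $k=n$ and (b) the symmetry of $V^A_{n,\mu}$ in $\mu$.

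For (a), I would follow \cite{IS07}. Write the spherical Whittaker function as a $K$-invariant solution of moderate growth of the system of Casimir differential equations. Substitute the Mellin--Barnes ansatz, so that the differential equations become difference equations in $(s_1,\dotsc,s_{n-1})$. Then check that the right-hand side, with $V^A_{n-1}$ satisfying the rank-$(n-1)$ difference equations by induction, satisfies the rank-$n$ difference equations; shifting contours here is justified by the uniform convergence conventions fixed at the start of this section. Multiplicity one of moderate-growth solutions then gives equality up to a constant. The constant is fixed by the normalization conventions, i.e.\ the factor $2^{-1}$ and the base case $V^A_{2,\mu}(s_1)=2^{-1}\GR(s_1+\mu_1)\GR(s_1+\mu_2)$. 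That base case is the classical Mellin transform of a product of $K$-Bessel functions and is visibly symmetric.

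For (b), the cleanest argument is representation-theoretic. For generic $\mu$ the representations $\pi_\mu$ and $\pi_{w\mu}$ are isomorphic for every $w$ in the Weyl group $S_n$, so their spherical Whittaker functions are proportional. With the normalization fixed by the Mellin--Barnes formulas, the function is the Jacquet integral multiplied by the $S_n$-invariant factor $\prod_{i<j}\GR(1+\mu_i-\mu_j)$-type normalization, and this combination is invariant under $S_n$. Hence $V^A_{n,\mu}=V^A_{n,w\mu}$ for generic $\mu$, and then for all $\mu$ by meromorphic continuation. As an independent, purely Barnes-integral check, I would also argue by induction on $n$. Symmetry in $\mu_1,\dotsc,\mu_{n-1}$ follows from the $k=n$ formula and the inductive hypothesis. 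It then remains to prove invariance under the transposition $\mu_{n-1}\leftrightarrow\mu_n$. To do this, substitute the recursion for $V^A_{n-1}$ into the $k=n$ formula; each $t_i$ then occurs in exactly two $\GR(t_i+\cdot)$ and two $\GR(-t_i+\cdot)$ factors. Integrate out every $t_i$ by Barnes' first lemma (Lemma \ref{lem:1st}), and compare the resulting kernel over $V^A_{n-2}$ with the one obtained after swapping $\mu_{n-1}$ and $\mu_n$.

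I expect the main obstacle to be this symmetry step. In the Barnes-integral route, the kernel left after applying Lemma \ref{lem:1st} contains denominators $\GR(s_i+s_{i+1}-u_i-u_{i-1}+\mu_{n-1}+\mu_n)$. It is not termwise symmetric under the swap, so it will need further transformations of the type in Lemma \ref{lem:int1} (i) to bring it to a manifestly symmetric form. For this reason I would first try the representation-theoretic argument, that Weyl invariance of the normalized spherical Whittaker function follows from the isomorphism $\pi_\mu\cong\pi_{w\mu}$ together with the normalization, and keep the Barnes computation only as a consistency check.
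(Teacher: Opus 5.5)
\textbf{Review of the proposal}

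The paper gives no argument for this statement; it imports it from \cite[Theorem 12]{IS07} and \cite[Proposition 2.4]{IS13}. Your reduction of arbitrary $k$ to one fixed $k$ through $S_n$-symmetry of $V^A_{n,\mu}$ is the natural way to obtain the general statement from a single recursion. Relying on \cite{IS07} for that single case is in line with the paper. Your difference-equation sketch in (a) is only an outline of that theorem, not a proof of it.

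\textbf{The gap in (b).} The problem lies in the symmetry argument you prefer. The isomorphism $\pi_\mu\cong\pi_{w\mu}$ only gives $V^A_{n,w\mu}=c(w,\mu)\,V^A_{n,\mu}$ for some unknown meromorphic $c$. To get $c=1$ you assert that the Mellin--Barnes-normalized function equals $\prod_{i<j}\GR(1+\mu_i-\mu_j)$ times the Jacquet integral. Nothing in your plan supplies this. In (a) the constant is fixed by \emph{declaring} the recursion to be the normalization, and that says nothing about how this normalization compares with Jacquet's. Also, $\prod_{i<j}\GR(1+\mu_i-\mu_j)$ is not itself $S_n$-invariant; only its product with the Jacquet integral is. Closing the gap along this route would need a genuine computation, for example matching a leading asymptotic coefficient (an iterated residue of the kernel) against Jacquet's $c$-function.

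\textbf{The fix.} The Barnes route you set aside closes the gap at once, and the obstacle you expect does not arise.
\begin{itemize}
\item Insert the recursion for $V^A_{n-1,(\mu_1,\dotsc,\mu_{n-1})}$ that removes $\mu_{n-1}$ into the $k=n$ formula.
\item Put $u_0=0$ and $u_{n-2}=\mu_{n-1}+\mu_n-|\mu|$.
\item Each $t_i$ then occurs only in $\GR(t_i-u_i)\GR(t_i-u_{i-1}+\mu_{n-1})\GR(s_i-t_i)\GR(s_{i+1}-t_i+\mu_n)$.
\item Lemma \ref{lem:1st} therefore removes every $t_i$ and gives
\begin{align*}
V^A_{n,\mu}(\bs)
={}&\frac{2^{-2}}{(4\pi\sqrt{-1})^{n-3}}\int_{u_1,\dotsc,u_{n-3}}
V^A_{n-2,(\mu_1,\dotsc,\mu_{n-2})}(u_1,\dotsc,u_{n-3})\\
&\times\prod_{j=1}^{n-1}\GR(s_j-u_{j-1}+\mu_{n-1})\,\GR(s_j-u_{j-1}+\mu_n)\\
&\times\prod_{i=1}^{n-2}\frac{\GR(s_i-u_i)\,\GR(s_{i+1}-u_{i-1}+\mu_{n-1}+\mu_n)}{\GR(s_i+s_{i+1}-u_{i-1}-u_i+\mu_{n-1}+\mu_n)}\prod_{i=1}^{n-3}du_i .
\end{align*}
\end{itemize}
The two factors that involve no $t_i$ are $\GR(s_1+\mu_n)$ and $\GR(s_{n-1}+|\mu|-\mu_n)=\GR(s_{n-1}-u_{n-2}+\mu_{n-1})$. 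They are exactly the $j=1$ factor with $\mu_n$ and the $j=n-1$ factor with $\mu_{n-1}$ in the second line. The denominators, and $u_{n-2}$, depend only on $\mu_{n-1}+\mu_n$.

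So the kernel is visibly invariant under $\mu_{n-1}\leftrightarrow\mu_n$, and no transformation of Lemma \ref{lem:int1} type is needed. Symmetry in $\mu_1,\dotsc,\mu_{n-1}$ follows by induction from the $k=n$ formula. Together these give full $S_n$-symmetry, and hence the statement for every $k$. For $n=3$, use the base case $V^A_2$ directly; it has the same shape with $V^A_1:=1$, and the computation reproduces Bump's manifestly symmetric formula.
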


The following formula was also useful in our previous paper \cite{IS13}, and 
it plays an important role in the present computation as well.

\begin{prop} \label{prop:Pieri_GLn} \cite[Proposition 2.6]{IS13} 
Let $ n \ge 2 $. 
For $ \sigma \in \C $, we have 
\begin{align*}
&  \prod_{i=1}^n \GR(\sigma+\mu_i) \cdot V_{n,\mu}^A(s_1,\dotsc,s_{n-1}) \\
& =  \GR(s_{n-1}+\sigma+|\mu|) \cdot  \frac{1}{ (4\pi \sqrt{-1})^{n-1}} \int_{t_1,\dotsc,t_{n-1}}
     V_{n,\mu}^A(t_1,\dotsc, t_{n-1}) \\
 & \times \prod_{i=1}^{n-1} \GR(s_i-t_i) \prod_{i=1}^{n-1} \GR(s_{i-1}-t_i + \sigma) \prod_{i=1}^{n-1} dt_i,
\end{align*}
where we understand that $ s_0=0 $.
\end{prop}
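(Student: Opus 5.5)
We plan to prove the identity by induction on $n$, using the recursive formula of Proposition \ref{prop:exp_GLn} to pass from $V^A_{n,\mu}$ to $V^A_{n-1,\mu'}$ with $\mu'=(\mu_1,\dotsc,\mu_{n-1})$ (that is, taking $k=n$). The base case $n=2$ is a direct application of Lemma \ref{lem:1st}. With $V^A_{2,\mu}(t_1)=2^{-1}\GR(t_1+\mu_1)\GR(t_1+\mu_2)$, the integral over $t_1$ on the right-hand side is
\[
\frac{1}{4\pi\sqrt{-1}}\int_{t_1}\GR(t_1+\mu_1)\GR(t_1+\mu_2)\GR(s_1-t_1)\GR(\sigma-t_1)\,dt_1 .
\]
By Barnes' first lemma this equals $\GR(s_1+\mu_1)\GR(s_1+\mu_2)\GR(\sigma+\mu_1)\GR(\sigma+\mu_2)/\GR(s_1+\sigma+\mu_1+\mu_2)$. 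The denominator cancels the prefactor $\GR(s_{1}+\sigma+|\mu|)$, which gives exactly the left-hand side. (Proposition \ref{prop:exp_GLn} is stated for $n\ge3$, so the step $n-1\to n$ is available for all $n\ge3$.)

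For the inductive step, we first expand the left-hand side with Proposition \ref{prop:exp_GLn}. This writes $V^A_{n,\mu}(s)$ as an $(n-2)$-fold integral of $V^A_{n-1,\mu'}(u)$ against
\[
\prod_{i=1}^{n-2}\GR(s_i-u_i)\cdot\GR(s_{n-1}+|\mu'|)\prod_{i=1}^{n-1}\GR(s_i-u_{i-1}+\mu_n).
\]
We then multiply by $\prod_{i<n}\GR(\sigma+\mu_i)$ and apply the induction hypothesis to $\prod_{i<n}\GR(\sigma+\mu_i)\,V^A_{n-1,\mu'}(u)$. This replaces it by $\GR(u_{n-2}+\sigma+|\mu'|)$ times an integral of $V^A_{n-1,\mu'}(v)$ against $\prod_i\GR(u_i-v_i)\GR(u_{i-1}-v_i+\sigma)$. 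The remaining factor $\GR(\sigma+\mu_n)$ is kept aside.

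On the right-hand side we do the same, expanding $V^A_{n,\mu}(t)$ with Proposition \ref{prop:exp_GLn} in terms of $V^A_{n-1,\mu'}(v)$. Each variable $t_i$ then occurs in exactly four gamma factors:
\[
\GR(s_i-t_i)\,\GR(s_{i-1}-t_i+\sigma)\,\GR(t_i-v_i)\,\GR(t_i-v_{i-1}+\mu_n)\qquad(1\le i\le n-2).
\]
For $i=n-1$ the factor $\GR(t_i-v_i)$ is replaced by $\GR(t_{n-1}+|\mu'|)$. Hence every $t_i$-integral can be evaluated by Lemma \ref{lem:1st}. After this, both sides are integrals of the same function $V^A_{n-1,\mu'}(v)$ against explicit kernels, which are products and quotients of $\GR$'s in $s$, $v$, $\sigma$, $\mu$. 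On the left the kernel still carries the auxiliary variables $u$. The proof therefore reduces to an identity of Barnes-type kernels: integrating the left-hand kernel over $u$ must reproduce the right-hand kernel, up to the matching prefactors $\GR(\sigma+\mu_n)$ and $\GR(s_{n-1}+\sigma+|\mu|)$.

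The main obstacle is proving this kernel identity. The $u_i$-integrals are coupled in a chain, since $u_i$ appears together with $u_{i-1}$ and $v_{i+1}$. Moreover, the right-hand kernel contains denominators of the form $\GR(s_i+s_{i-1}+\sigma-v_i-v_{i-1}+\mu_n)$ coming from Barnes' first lemma. We would eliminate the $u$-variables one at a time, starting from $u_{n-2}$, where the factor $\GR(u_{n-2}+\sigma+|\mu'|)$ supplies the extra parameter. At each stage the integrand should have the shape of $I_1$ with a balanced parameter sum, so Barnes' second lemma (Lemma \ref{lem:int1}(iii)) applies. The resulting denominator should cancel against a numerator factor and propagate to the next index. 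If the parameters are not balanced at some stage, we would first use the transformations of Lemma \ref{lem:int1}(i),(ii) to rearrange the integrand. All contour shifts and interchanges are justified in a domain of absolute convergence and then extended by meromorphic continuation, as stipulated at the start of Section \ref{sec:Wh}.
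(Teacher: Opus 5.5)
Your strategy is sound, but the proposal stops exactly at the step that constitutes the proof, and your analysis of that step is wrong. The paper gives no proof here; it only cites \cite[Proposition 2.6]{IS13}. Your base case is correct. Expanding both sides with Proposition \ref{prop:exp_GLn} for $k=n$, and using the induction hypothesis on the left, does reduce everything to comparing two kernels against $V^A_{n-1,\mu'}(v)$.

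The $u$-variables are \emph{not} coupled. No gamma factor in the left-hand kernel contains two of them, because $u_{i-1}$ enters only through $\GR(u_{i-1}-v_i+\sigma)$ and $\GR(s_i-u_{i-1}+\mu_n)$. Hence, for $1\le j\le n-3$, the variable $u_j$ occurs only in $\GR(u_j-v_j)\GR(u_j-v_{j+1}+\sigma)\GR(s_j-u_j)\GR(s_{j+1}-u_j+\mu_n)$. The variable $u_{n-2}$ occurs only in $\GR(u_{n-2}-v_{n-2})\GR(u_{n-2}+\sigma+|\mu'|)\GR(s_{n-2}-u_{n-2})\GR(s_{n-1}-u_{n-2}+\mu_n)$. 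Each $u_j$-integral is therefore a separate instance of Lemma \ref{lem:1st}. The left kernel has no gamma factors in a denominator at all, so no $I_1$-type integrand arises, and Barnes' second lemma and Lemma \ref{lem:int1} play no role.

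Carrying this out, the $u_j$-integral ($j\le n-3$) gives the first expression below, and the $u_{n-2}$-integral gives the second:
\begin{align*}
&\frac{\GR(s_j-v_j)\GR(s_{j+1}-v_j+\mu_n)\GR(s_j-v_{j+1}+\sigma)\GR(s_{j+1}-v_{j+1}+\sigma+\mu_n)}{\GR(s_j+s_{j+1}-v_j-v_{j+1}+\sigma+\mu_n)},\\
&\frac{\GR(s_{n-2}-v_{n-2})\GR(s_{n-1}-v_{n-2}+\mu_n)\GR(s_{n-2}+\sigma+|\mu'|)\GR(s_{n-1}+\sigma+|\mu|)}{\GR(s_{n-2}+s_{n-1}-v_{n-2}+\sigma+|\mu|)}.
\end{align*}
The factor $\GR(s_{n-1}+\sigma+|\mu|)$ here is exactly the prefactor on the right-hand side. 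The denominators you expected to need the second lemma for come straight out of the first lemma.

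On the right, your $t$-integrations give the same product after the index shift $i=j+1$. In addition:
\begin{itemize}
\item $\GR(\sigma+\mu_n)$ comes from the $t_1$-integral and matches the factor you set aside.
\item The $u$-free factors $\GR(s_1+\mu_n)\GR(\sigma-v_1)\GR(s_{n-1}+|\mu'|)$ on the left reappear from the $t_1$- and $t_{n-1}$-integrals.
\item One surplus factor $\GR(s_1-v_1+\sigma+\mu_n)$ appears in the denominator from the $t_1$-integral and in the numerator from the $t_2$-integral, so it cancels.
\end{itemize}
So the two kernels agree pointwise in $v$, and the induction closes with $2n-3$ applications of Lemma \ref{lem:1st} and nothing else. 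As written, the proposal leaves this identity unproved and attributes it to a coupling and a balancing mechanism that do not occur.
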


We now present a new formula that will be useful in the subsequent computations.

\begin{prop} \label{prop:new_GLn}
Let $ n \ge 2 $. For $\bs = (s_1,\dotsc,s_n) \in \C^n $
and $ s, \sigma, a_1,a_2  \in \C $, 
we define the integral $ T_{n,  \mu } (\bs ; s, \sigma,a_1,a_2) $ by 
\begin{align*}
& T_{n,  \mu } (\bs ; s, \sigma,a_1,a_2) \\
&= \frac{1}{ (4\pi \sqrt{-1})^{n-1}} \int_{t_1,\dotsc, t_{n-1}} 
   V_{ n,  (\mu_1,\dotsc, \mu_n) }^A (s-t_1, 2s-t_2, \dotsc, (n-1)s-t_{n-1}  ) 
\\
&\times 
  \prod_{i=1}^{n-1}  \frac{ \GR(t_i-s_i) \GR(t_i-s_{i+1}+s+a_2) \GR(t_{i-1}-s_i+s+a_1)  }
  { \GR(t_{i-1}+t_i-s_i -s_{i+1}+2s+a_1+a_2) } 
\\
& \times \prod_{i=1}^{n-1} \GR(t_{i-1}-s_{i+1}+2s+a_1+a_2) \\
& \times 
 \frac{ \GR(  t_{n-1} +s-\sigma +a_1 )\GR( -t_{n-1}+s+\sigma + a_2) }{  \GR(  (n+1)s - t_{n-1}+|\mu|+a_2 ) } 
  \prod_{i=1}^{n-1} dt_i, 
\end{align*}
where we understand that $ t_0= 0 $.
Then we have
\begin{align*}
& T_{n,\mu} (\bs; s, \sigma, a_1,a_2) \\
& = \frac{ 2\GR(s-s_{n-1}+\sigma+a_2) \GR(2s-s_n+\sigma+2a_2)}{ \GR(ns-\sigma +|\mu|) \prod_{i=1}^n \GR(2s+a_2+\mu_i) }
\\
& \times \frac{1}{ (4\pi \sqrt{-1})^{n-1}} \int_{r_1,\dotsc,r_{n-1}} 
V_{n+1, (\mu_1,\dotsc, \mu_n,a_1)}^A (s-r_1,2s-r_2, \dotsc, (n-1)s-r_{n-1}, ns-\sigma)
\\
& \times \prod_{i=1}^{n-1} \GR(r_i-s_i) 
   \prod_{i=1}^{n-1} \GR(r_i-s_{i+1}+s +a_2)
    \prod_{i=1}^{n-1} \GR( r_i-s_{i-1}+s+a_2) \\
& \times  \prod_{i=2}^{n-1} \GR(r_i-s_i+2s+2a_2) 
   \prod_{i=1}^{n-2}  \frac{1 }
  { \GR(r_i+r_{i+1}-s_i-s_{i+1}+2s+2a_2)}  \\
& \times   \frac{1}{\GR(r_{n-1}-s_{n-1}-s_n+2s+\sigma +2a_2)} 
 \,\prod_{i=1}^{n-1} dr_i,
\end{align*}
where we understand that $ s_0= 0 $.
\end{prop}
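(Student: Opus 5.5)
We will prove Proposition \ref{prop:new_GLn} by expanding the right-hand side and reducing it to the left-hand side; the only tools needed are Proposition \ref{prop:exp_GLn}, Proposition \ref{prop:Pieri_GLn} and the one-variable identities of Section \ref{sec:Barnes}.

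\emph{Step 1: peel off $\mu_{n+1}=a_1$.} Apply Proposition \ref{prop:exp_GLn} to $V^A_{n+1,(\mu_1,\dotsc,\mu_n,a_1)}$ with $k=n+1$. Then $V^A_{n+1}$ at
\[
(s-r_1,\dotsc,(n-1)s-r_{n-1},ns-\sigma)
\]
becomes an integral over $t'_1,\dotsc,t'_{n-1}$ of $V^A_{n,\mu}(t'_1,\dotsc,t'_{n-1})$ against the kernel
\[
\prod_{i=1}^{n-1}\GR(is-r_i-t'_i)\cdot\GR(ns-\sigma+|\mu|)\cdot\prod_{i=1}^{n}\GR(s_i'-t'_{i-1}+a_1),
\]
where $s'_i=is-r_i$ for $i\le n-1$ and $s'_n=ns-\sigma$. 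The factor $\GR(ns-\sigma+|\mu|)$ cancels the one in the denominator of the right-hand side. Next substitute $t'_i=is-t_i$, so that the argument of $V^A_{n,\mu}$ matches the one in $T_{n,\mu}$. After this substitution, the factor with $i=n$ becomes $\GR(t_{n-1}+s-\sigma+a_1)$, which already appears on the left-hand side. The factors $\GR(r_i-s_{i-1}+s+a_2)$ and $\GR(s-r_i+t_{i-1}+a_1)$ pair up in the expected way.

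\emph{Step 2: interchange and integrate out the $r_i$.} Move the $r$-integrals inside the $t$-integral. It then remains to show that, for fixed $t$, the $(n-1)$-fold $r$-integral equals
\[
\prod_{i=1}^n\GR(2s+a_2+\mu_i)\,/\,2
\]
times the left-hand kernel. This is not immediate, because the target still contains the factor $\prod_i\GR(2s+a_2+\mu_i)$. I would first use Proposition \ref{prop:Pieri_GLn} with $\sigma=2s+a_2$, read from right to left, to absorb this factor into $V^A_{n,\mu}$. This introduces an extra integration, whose kernel is $\GR(s_i-t_i)\GR(s_{i-1}-t_i+\sigma)$.

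The resulting kernel is a chain in which $r_i$ couples only to $r_{i-1}$ and $r_{i+1}$, through the denominators $\GR(r_i+r_{i+1}-\cdots)$. I would therefore integrate $r_1,r_2,\dotsc,r_{n-1}$ one at a time, each by a one-variable lemma.
\begin{itemize}
\item The generic step has three $\GR(r+\cdot)$ and two $\GR(-r+\cdot)$ factors, with denominators $\GR(r+r_{i+1}+\cdot)$ and $\GR(2\cdot)$. I would check the balancing condition and apply Lemma \ref{lem:int1}(iii) (Barnes' second lemma), or Lemma \ref{lem:int3}. The output should be exactly one factor of the shape $\GR(t_{i-1}+t_i-s_i-s_{i+1}+2s+a_1+a_2)^{-1}$, together with the numerator factors $\GR(t_i-s_i)\GR(t_i-s_{i+1}+s+a_2)$ and $\GR(t_{i-1}-s_{i+1}+2s+a_1+a_2)$. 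The leftover denominator is then passed along the chain to the next variable.
\item The last variable $r_{n-1}$ carries the special denominator $\GR(r_{n-1}-s_{n-1}-s_n+2s+\sigma+2a_2)$. It should be handled with Lemma \ref{lem:int1}(ii) or Barnes' first lemma. This produces $\GR(-t_{n-1}+s+\sigma+a_2)\,/\,\GR((n+1)s-t_{n-1}+|\mu|+a_2)$ and cancels the prefactors $\GR(s-s_{n-1}+\sigma+a_2)$ and $\GR(2s-s_n+\sigma+2a_2)$.
\end{itemize}

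\emph{Order of proof.} As a sanity check, and as the base of the argument, I would first carry out the case $n=2$ completely, where there is a single $r$ and a single $t$. There the identity should be exactly Lemma \ref{lem:int1}(ii) combined with Lemma \ref{lem:1st}, and this fixes the constant $2$ coming from $V^A_2=2^{-1}\GR\GR$. An alternative to the direct chain is induction on $n$: apply Proposition \ref{prop:exp_GLn} with $k=n$ to $V^A_{n,\mu}$ on both sides and use the $n-1$ statement. I would keep this in reserve in case the direct chain integration becomes unwieldy.

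\emph{Main obstacle.} The hard step is Step 2: arranging the order of integration, and the use of Proposition \ref{prop:Pieri_GLn}, so that every single-variable integral satisfies the balancing hypothesis of Barnes' second lemma or Lemma \ref{lem:int3}. If the naive order fails, I would first try a Lemma \ref{lem:int1}(i) transformation on one variable to restore balance before integrating it out.
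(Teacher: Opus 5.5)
Your Step~1 is correct. It is the paper's final step read backwards: Proposition~\ref{prop:exp_GLn} with $k=n+1$ supplies both $\GR(ns-\sigma+|\mu|)$ and the factor $2^{-1}$. After it, the right-hand side is $\prod_i\GR(2s+a_2+\mu_i)^{-1}$ times the integral of $V^A_{n,\mu}(t')$ against $\GR(s-s_{n-1}+\sigma+a_2)\GR(2s-s_n+\sigma+2a_2)\GR(ns-\sigma-t'_{n-1}+a_1)\,B_n(\bs,t';s,\sigma,a_1,a_2)$, with $B_n$ as in Lemma~\ref{lem:A=B}. Applying Proposition~\ref{prop:Pieri_GLn} with $\sigma=2s+a_2$ to $\prod_i\GR(2s+a_2+\mu_i)\,T_{n,\mu}$ is also exactly what the paper does.

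The gap is in Step~2. The matching ``for fixed $t$'' cannot hold: your $r$-integral does not involve $\mu$ at all, while the target depends on the individual $\mu_i$. Moreover, after Pieri the left-hand $V^A_{n,\mu}$ is evaluated at the new variable $\bq$. So it is $\bq$, not $t$, that must be identified with $t'$; the $t_i$ on the left become auxiliary, and your $r_i$ on the right do not go away. What must be proved is therefore an identity, for fixed $\bq$, between two $(n-1)$-fold integrals. Already for $n=2$ this is an identity between two unbalanced $I_1$-integrals, so it has to come from transformations, not from a chain of evaluations.

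The evaluations you describe are unavailable in any case. In $B_n$ a generic $r_i$ sits in four factors $\GR(r_i+\cdot)$, namely $\GR(r_i-s_i)$, $\GR(s+r_i-s_{i\pm1}+a_2)$ and $\GR(2s+r_i-s_i+2a_2)$. It also sits in two factors $\GR(-r_i+\cdot)$ and in two coupling denominators. This matches none of the lemmas of Section~\ref{sec:Barnes}. For $n\ge3$ the $r_1$-integral is of type $I_1$, but it is balanced only when $r_2=2s+a_1-q_1$.

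The missing idea is the reduction to Lemma~\ref{lem:A=B}. First split each ratio in $T_{n,\mu}$, by Barnes' first lemma, into an $r_i$-integral of $\GR(t_i-r_i)\GR(t_{i-1}-r_i+s+a_1)\GR(r_i-s_i)\GR(r_i-s_{i+1}+s+a_2)$. After Pieri, each $t_i$ occurs in exactly two factors $\GR(t_i+\cdot)$ and two factors $\GR(-t_i+\cdot)$, so it is removed by Barnes' first lemma. This leaves the $r$-integral $A_n$.

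The identity $A_n=\GR(s-s_{n-1}+\sigma+a_2)\GR(2s-s_n+\sigma+2a_2)\GR(ns-q_{n-1}-\sigma+a_1)\,B_n$ is then a \emph{transformation}, proved by induction on $n$. Lemma~\ref{lem:int1}(i), applied to the $r_{n-1}$-integral, trades the $\sigma$-dependent factors. What remains in $r_1,\dotsc,r_{n-2}$ is exactly $A_{n-1}$ with $\sigma$ replaced by $r_{n-1}$. This self-similarity in $\sigma$ is why $\sigma$ is a free parameter in the statement, and it is what closes the argument; your Step~1 then finishes the proof.

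Your fallback induction, peeling $\mu_n$ off both sides, is a different recursion and is not carried out. As written, the proposal does not yet give a proof.
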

\bpf 
For $ 1 \le i \le n-1 $, Barnes' first lemma implies that 
\begin{align*}
& \frac{   \GR(t_i-s_i) \GR(t_i-s_{i+1}+s+a_2) \GR(t_{i-1}-s_i+s+a_1) \GR(t_{i-1}-s_{i+1}+2s+a_1+a_2) }
  { \GR(t_{i-1}+t_i-s_i -s_{i+1}+2s+a_1+a_2) }
\\
& = \frac{1}{4\pi \sqrt{-1}}\int_{r_i}  \GR(t_i-r_i)\GR(t_{i-1}-r_i +s+a_1)\GR(r_i-s_i )\GR(r_i-s_{i+1}+s +a_2) \,dr_i.
\end{align*}
We apply Proposition \ref{prop:Pieri_GLn} with $ \sigma = 2s+a_2 $ to get 
\begin{align*}
& \prod_{i=1}^{n} \GR(2s+a_2+\mu_i)  \cdot T_{n, \mu } (\bs; s, \sigma,a_1,a_2) \\
& = \frac{1}{ (4\pi \sqrt{-1})^{3n-3}} \int_{t_1,\dotsc, t_{n-1},  r_1,\dotsc, r_{n-1}, q_1,\dotsc, q_{n-1} } 
   V_{ n, \mu}^A (\bq) 
\\
& \times 
 \prod_{i=1}^{n-1} \GR(is-t_i-q_i) \prod_{i=1}^{n-1} \GR((i+1)s-t_{i-1}-q_i+a_2) 
\\
&\times 
  \prod_{i=1}^{n-1}   \GR(t_i-r_i) 
  \prod_{i=1}^{n-1}\GR(t_{i-1}-r_i+s +a_1)   
\\
& \times 
   \prod_{i=1}^{n-1}\GR(r_i-s_i ) \prod_{i=1}^{n-1}\GR(r_i-s_{i+1} +s+a_2)\\
& \times 
  \GR( t_{n-1} +s-\sigma +a_1 )\GR( -t_{n-1}+s+\sigma + a_2)  
  \prod_{i=1}^{n-1} dt_i  \prod_{i=1}^{n-1} dr_i \prod_{i=1}^{n-1} dq_i.
\end{align*}
Here we write $\bq = (q_1, \dotsc, q_{n-1}) $.
Applying Barnes' first lemma to the integrals in $t_1,t_2,\ldots,t_{n-1}$, we get\begin{align*}
& \prod_{i=1}^{n} \GR(2s+a_2+\mu_i)  \cdot T_{n, \mu} (\bs; s,\sigma,a_1,a_2) \\
& = \frac{1}{ (4\pi \sqrt{-1})^{2n-2}} \int_{r_1,\dotsc, r_{n-1}, q_1,\dotsc, q_{n-1} } 
   V_{ n, \mu}^A (\bq) 
  \GR(-q_1+2s+a_2) \GR(s-r_1 +a_1)
\\
& \times \prod_{i=1}^{n-2}
\frac{ \GR( is-r_i-q_i) \GR( (i+1)s-r_{i+1}-q_i+a_1) }
{\GR( (2i+3)s -r_i-r_{i+1}-q_i-q_{i+1} +a_1+a_2) }
\\
& \times \prod_{i=1}^{n-2} \GR((i+2)s-r_i-q_{i+1}+a_2) \prod_{i=1}^{n-2} \GR( (i+3)s- r_{i+1}-q_{i+1}+a_1+a_2)
\\
& \times 
\frac{\GR( (n-1)s-r_{n-1}-q_{n-1} ) \GR( ns-q_{n-1} -\sigma+a_1)
   }{ 
  \GR((n+1)s - r_{n-1} - q_{n-1} +a_1+a_2) }
\\
& \times \GR(s-r_{n-1}+\sigma +a_2 ) \GR(2s+a_1+ a_2)
\\
& \times 
   \prod_{i=1}^{n-1}\GR(r_i-s_i ) \prod_{i=1}^{n-1}\GR(s+r_i-s_{i+1} +a_2)
   \prod_{i=1}^{n-1} dr_i \prod_{i=1}^{n-1} dq_i
\\
& = \frac{ 1}{  (4\pi \sqrt{-1})^{n-1}} \int_{ q_1,\dotsc, q_{n-1} } 
     V_{ {n}, \mu}^A (\bq)  A_n(\bs, \bq; s, \sigma, a_1,a_2)  \prod_{i=1}^{n-1} dq_i,
\end{align*}
where $ A_n(\bs, \bq; s, \sigma, a_1,a_2) $ is the integral  \begin{align*}&A_n(\bs, \bq; s, \sigma, a_1,a_2)\\
& =\frac{1}{ (4\pi \sqrt{-1})^{n-1}} \int_{r_1,\dotsc, r_{n-1}} 
   \prod_{i=1}^{n-1}\GR( is-r_i-q_i) \prod_{i=1}^{n-1}\GR( is-r_i-q_{i-1}+a_1) \\
& \times \prod_{i=1}^{n-2} \frac{ 1 }{  \GR( (2i+3)s -r_i-r_{i+1}-q_i-q_{i+1} +a_1+a_2) }
\\
& \times \prod_{i=1}^{n-1} \GR((i+1)s-r_{i-1}-q_{i}+a_2) 
  \prod_{i=2}^{n-1} \GR( (i+2)s- r_{i}-q_{i}+a_1+a_2) 
\\
& \times 
  \frac{ \GR( ns-q_{n-1} -\sigma+a_1)
  \GR(s-r_{n-1}+\sigma +a_2 ) \GR(2s+a_1+ a_2)}{ \GR( (n+1)s-r_{n-1}-q_{n-1}+a_1+a_2)}  
\\
& \times 
   \prod_{i=1}^{n-1}\GR(r_i-s_i ) \prod_{i=1}^{n-1}\GR(s+r_i-s_{i+1} +a_2)
   \prod_{i=1}^{n-1} dr_i.
\end{align*}
Here we understand $ r_0 =q_0 = 0$.

To this result, we will apply:
\begin{lem} \label{lem:A=B}
We have
\begin{align} \label{eqn:A=B}
\begin{split}
&  A_n(\bs, \bq; s, \sigma, a_1,a_2) 
\\
& = \GR(s-s_{n-1}+\sigma+a_2) \GR(2s-s_n+\sigma+2a_2) \GR(ns-q_{n-1}-\sigma+a_1)
\\
& \times B_n(\bs, \bq; s, \sigma, a_1,a_2),   
\end{split}
\end{align}
where $ B_n(\bs, \bq; s, \sigma, a_1,a_2) $ is the integral  \begin{align*}&B_n(\bs, \bq; s, \sigma, a_1,a_2) \\
&= \frac{1}{ (4\pi \sqrt{-1})^{n-1} }\int_{r_1,\dotsc,r_{n-1}} 
 \prod_{i=1}^{n-1} \GR(is-r_i-q_i) \prod_{i=1}^{n-1} \GR(is-r_i-q_{i-1}+a_1) 
\\
& \times  \prod_{i=1}^{n-1} \GR(r_i-s_i) \prod_{i=1}^{n-1} \GR(s+r_i-s_{i+1}+a_2)  \prod_{i=1}^{n-1} \GR(s+r_i-s_{i-1}+a_2) \\
& \times  
\prod_{i=2}^{n-1} \GR(2s+ r_i-s_i+2a_2)  \prod_{i=1}^{n-2}  \frac{1}{ \GR(2s+r_i+r_{i+1}-s_i-s_{i+1}+2a_2)} 
\\
& \times \frac{ 1}{ \GR(2s+r_{n-1}-s_{n-1}-s_n+\sigma+2a_2) }
\prod_{i=1}^{n-1} dr_i.
\end{align*}
\end{lem}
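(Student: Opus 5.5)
The plan is to prove \eqref{eqn:A=B} by induction on $n$, transforming the integral in one $r$-variable at a time with the Barnes-integral transformations of Section \ref{sec:Barnes}, chiefly Lemma \ref{lem:int1}, and, where a balancing condition holds, Lemma \ref{lem:int3} and Barnes' second lemma. The guiding observation is that $A_n$ and $B_n$ share the factors
\[
\prod_{i}\GR(is-r_i-q_i)\,\GR(is-r_i-q_{i-1}+a_1)\,\GR(r_i-s_i)\,\GR(s+r_i-s_{i+1}+a_2).
\]
They differ only in the "coupling" gamma functions. In $A_n$ these couple the $r_i$ to the $q_i$, through $\GR((2i+3)s-r_i-r_{i+1}-q_i-q_{i+1}+a_1+a_2)^{-1}$ and the attached numerator factors. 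In $B_n$ they couple the $r_i$ to the $s_i$. So the identity is a duality that moves the denominators from the $q$-side to the $s$-side. The transformation Lemma \ref{lem:int1}(i),(ii) does exactly this, since it exchanges the roles of the upper and lower parameters of an $I_1$-integral.

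\emph{Base case $n=2$.} In $A_2$, the factor $\GR(3s-r_1-q_1+a_1+a_2)$ does not cancel, so the $r_1$-integrand is of the form $I_1$ after $r_1\mapsto -z$. It has three gamma functions $\GR(s-r_1-q_1)$, $\GR(s-r_1+a_1)$, $\GR(s-r_1+\sigma+a_2)$, two gamma functions $\GR(r_1-s_1)$, $\GR(s+r_1-s_2+a_2)$, and the denominator $\GR(3s-r_1-q_1+a_1+a_2)$. Likewise, $B_2$ is an $I_1$-integral with three gamma functions $\GR(r_1-s_1)$, $\GR(s+r_1-s_2+a_2)$, $\GR(s+r_1+a_2)$, two gamma functions $\GR(s-r_1-q_1)$, $\GR(s-r_1+a_1)$, and denominator $\GR(2s+r_1-s_1-s_2+\sigma+2a_2)$. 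I would apply Lemma \ref{lem:int1}(i) once, and if necessary (ii), to $A_2$, and check that the resulting $I_1$ coincides with $B_2$ after a shift of variable. The prefactors should then reduce, after cancellation with $\GR(2s-q_1+a_2)\GR(2s-q_1-\sigma+a_1)\GR(2s+a_1+a_2)$, to $\GR(s-s_1+\sigma+a_2)\GR(2s-s_2+\sigma+2a_2)\GR(2s-q_1-\sigma+a_1)$.

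\emph{Inductive step.} For general $n$, I would isolate the last variable $r_{n-1}$. In $A_n$, the numerator factor $\GR((i+2)s-r_i-q_i+a_1+a_2)$ at $i=n-1$ cancels the denominator $\GR((n+1)s-r_{n-1}-q_{n-1}+a_1+a_2)$. The $r_{n-1}$-integral is then again an $I_1$-integral. Its decreasing parameters are $\GR((n-1)s-r_{n-1}-q_{n-1})$, $\GR((n-1)s-r_{n-1}-q_{n-2}+a_1)$, $\GR(s-r_{n-1}+\sigma+a_2)$; its increasing parameters are $\GR(r_{n-1}-s_{n-1})$, $\GR(s+r_{n-1}-s_n+a_2)$; and its denominator is $\GR((2n+1)s-r_{n-2}-r_{n-1}-q_{n-2}-q_{n-1}+a_1+a_2)$. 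Applying Lemma \ref{lem:int1}(ii) to this integral does two things. It produces the factors $\GR(s-s_{n-1}+\sigma+a_2)$ and $\GR(ns-q_{n-1}-\sigma+a_1)$ outside. It also replaces the denominator involving $q_{n-2},q_{n-1}$ by one involving $s_{n-1},s_n$ and $\sigma$, namely the factor $\GR(2s+r_{n-1}-s_{n-1}-s_n+\sigma+2a_2)^{-1}$ of $B_n$. The new parameter then shifts the coupling onto $r_{n-2}$, giving a structure that looks like $A_{n-1}$ with modified arguments in which $\sigma$ is replaced by a combination of $r_{n-1}$ and $s_n$. I would invoke the induction hypothesis in the variables $r_1,\dots,r_{n-2}$, and then recombine the $r_{n-1}$-integral using Lemma \ref{lem:1st} or Lemma \ref{lem:int1}(i) in reverse to recover $B_n$.

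\emph{Main obstacle.} I expect the main difficulty to be the bookkeeping in this inductive step. One must choose the order of integration and the specific variant of Lemma \ref{lem:int1}, or else Lemma \ref{lem:int3} when a balancing condition like $a+b+c+\alpha+\beta+\gamma=d+\delta$ happens to hold, so that after one step the remaining integrand has exactly the shape of $A_{n-1}$ with shifted parameters. If peeling off $r_{n-1}$ does not close up, I would instead peel off $r_1$ first. In that case $q_0=r_0=s_0=0$ simplifies the boundary factors, and the factor $\GR(2s-q_1+a_2)$ plays the role of the constant $\GR(2s+a_1+a_2)$. I would then run the induction from the bottom index upward.
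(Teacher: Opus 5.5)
Your overall route is the paper's: induction on $n$, with the base case done by a single application of Lemma~\ref{lem:int1}(i). Your identification of $A_2$ and $B_2$ as $I_1$-integrals, and of the cancelling prefactors, is exactly right. The inductive step then transforms only the $r_{n-1}$-integral and recognizes the rest as $A_{n-1}$.

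\textbf{The gap.} The inductive step fails as you state it. Lemma~\ref{lem:int1}(ii) returns $I_1(a,f-b-e,f-c-e;b+c+d+e-f,e;f)$, which keeps the denominator parameter $f$. So it cannot turn the coupling denominator $\GR((2n-1)s-r_{n-2}-r_{n-1}-q_{n-2}-q_{n-1}+a_1+a_2)$ into $\GR(2s+r_{n-1}-s_{n-1}-s_n+\sigma+2a_2)$. (The coefficient is $2n-1$, not $2n+1$.) The upper/lower exchange you describe is part (i).

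\textbf{The fix.} Apply Lemma~\ref{lem:int1}(i) with
\[
(a,b,c;d,e;f)=\bigl(s+\sigma+a_2,\,(n-1)s-q_{n-2}+a_1,\,(n-1)s-q_{n-1};\,-s_{n-1},\,s-s_n+a_2;\,(2n-1)s-r_{n-2}-q_{n-2}-q_{n-1}+a_1+a_2\bigr).
\]
\begin{enumerate}
\item The prefactor is $\GR(s-s_{n-1}+\sigma+a_2)\GR(2s-s_n+\sigma+2a_2)$ divided by $\GR(ns-r_{n-2}-q_{n-1}+a_2)\GR(ns-r_{n-2}-q_{n-2}+a_1+a_2)$. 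So the second outside factor is $\GR(2s-s_n+\sigma+2a_2)$, not $\GR(ns-q_{n-1}-\sigma+a_1)$.
\item The denominator $\GR(ns-r_{n-2}-q_{n-1}+a_2)$ cancels the $i=n-1$ factor of $\prod_i\GR((i+1)s-r_{i-1}-q_i+a_2)$.
\item The denominator $\GR(ns-r_{n-2}-q_{n-2}+a_1+a_2)$ is exactly the boundary denominator of $A_{n-1}$.
\item In the new integral, $f-b-c=s-r_{n-2}+a_2$ and $a+d+e=2s-s_{n-1}-s_n+\sigma+2a_2$. Renaming the new variable as $r_{n-1}$, the integrand in $r_1,\dots,r_{n-2}$ is that of
\[
A_{n-1}\bigl((s_1,\dots,s_{n-1}),(q_1,\dots,q_{n-2});s,r_{n-1},a_1,a_2\bigr).
\]
Here $\sigma$ is replaced by $r_{n-1}$ alone, not by a combination involving $s_n$, and the constant $\GR(2s+a_1+a_2)$ is absorbed into $A_{n-1}$.
\item $\GR(ns-q_{n-1}-\sigma+a_1)$ is not produced by the transformation. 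It is already an $r$-independent factor of $A_n$ and is simply carried along.
\end{enumerate}

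\textbf{No recombination needed.} After the induction hypothesis, three factors come out:
\[
\GR(s-s_{n-2}+r_{n-1}+a_2)\,\GR(2s-s_{n-1}+r_{n-1}+2a_2)\,\GR((n-1)s-q_{n-2}-r_{n-1}+a_1).
\]
Together with the transformed $r_{n-1}$-factors and $B_{n-1}$, the integrand is already literally that of $B_n$. So your final recombination via Lemma~\ref{lem:1st} or (i) in reverse is unnecessary. The fallback of peeling off $r_1$ first is also unneeded.
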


\bpf 
We show (\ref{eqn:A=B}) by induction on $n$. 
For the case $n=2$, we see that
\begin{align*}
& A_2(\bs, \bq; s, \sigma, a_1,a_2) \\
& = \GR(2s-q_1+a_2) \GR( 2s-q_1 -\sigma+a_1) \GR(2s+a_1+ a_2) \\
& \times    I_1( s+\sigma+a_2,  s+a_1,  s-q_1;  -s_1,  s-s_2+a_2;   3s-q_1+a_1+a_2).
\end{align*} 
By Lemma \ref{lem:int1}(i), then,
\begin{align*}
& A_2(\bs, \bq; s, \sigma, a_1,a_2) \\
& = \GR( 2s-q_1 -\sigma+a_1) \GR(s-s_1+\sigma+a_2) \GR(2s-s_2+\sigma+2a_2) 
\\
& \times I_1(-s_1, s-s_2+a_2, s+a_2;   s+a_1, s-q_1;  2s-s_1-s_2+\sigma+2a_2),
\end{align*} 
as desired.

For our inductive argument, we assume that the identity (\ref{eqn:A=B}) holds for $n-1 $. 
We apply Lemma \ref{lem:int1}(i), with 
\begin{align*}
 (a,b,c) & = (s+\sigma+a_2,  (n-1)s-q_{n-2}+a_1,  (n-1)s-q_{n-1}), \\
 (d,e,f) & = (-s_{n-1},  s-s_n+a_2,  (2n-1)s-r_{n-2}-q_{n-2}-q_{n-1}+a_1+a_2), \end{align*}to the above integral in $r_{n-1}$, from which it follows that
\begin{align*}&A_n(\bs, \bq; s, \sigma, a_1,a_2)\\
& = \frac{1}{ (4\pi \sqrt{-1})^{n-1}} \int_{r_1,\dotsc, r_{n-1}} 
   \prod_{i=1}^{n-2}\GR( is-r_i-q_i) \prod_{i=1}^{n-2} \GR( is-r_i-q_{i-1}+a_1)
\\
& \times 
\prod_{i=1}^{n-3} \frac{ 1 }{  \GR( (2i+3)s -r_i-r_{i+1}-q_i-q_{i+1} +a_1+a_2) }
\\
& \times \prod_{i=1}^{n-2} \GR((i+1)s-r_{i-1}-q_{i}+a_2) \prod_{i=2}^{n-2} \GR( (i+2)s- r_{i}-q_{i}+a_1+a_2)
\\
& \times 
  \GR( ns-q_{n-1} -\sigma+a_1)
\GR(2s+a_1+ a_2)
\\
& \times 
   \prod_{i=1}^{n-2}\GR(r_i-s_i ) \prod_{i=1}^{n-2}\GR(s+r_i-s_{i+1} +a_2)
\\
& \times 
  \frac{ \GR(s-s_{n-1}+\sigma+a_2) \GR(2s-s_n+\sigma+2a_2) }{  \GR( ns -r_{n-2}-q_{n-2} +a_1+a_2) } 
\\
& \times \frac{ \GR(r_{n-1} -s_{n-1})\GR(s+r_{n-1} -s_n+a_2 ) \GR(s-r_{n-2}+r_{n-1} +a_2) }{ 
  \GR(2s+r_{n-1}-s_{n-1}-s_n+\sigma+2a_2)}
\\
& \times \GR(  (n-1)s-r_{n-1}  -q_{n-2}+a_1) \GR((n-1)s-r_{n-1} -q_{n-1}) 
   \prod_{i=1}^{n-1} dr_i.
\end{align*}
Here we replaced $ r_{n-1} \to -r_{n-1} $. Then we see that
\begin{align*}
& A_n(\bs, \bq; s, \sigma, a_1,a_2) \\
& = \GR(s-s_{n-1}+\sigma+a_2) \GR(2s-s_n+\sigma+2a_2) 
\\
& \times \frac{1}{ 4\pi \sqrt{-1} } \int_{r_{n-1}} 
   \GR(r_{n-1} -s_{n-1}) \GR(s+r_{n-1} -s_n+a_2 ) \\
& \times \frac{\GR((n-1)s-r_{n-1} -q_{n-1}) \GR( ns-q_{n-1} -\sigma+a_1)}{  \GR(2s+r_{n-1}-s_{n-1}-s_n+\sigma+2a_2)} 
\\
& \times A_{n-1}( (s_1,\dotsc, s_{n-1}), (q_1,\dotsc, q_{n-2}); s, r_{n-1}, a_1,a_2) \, dr_{n-1}.
\end{align*}
Applying the induction hypothesis, we obtain (\ref{eqn:A=B}), and our lemma is proved.  
\epf

\medskip 

Let us return to the proof of Proposition \ref{prop:new_GLn}.
Using Lemma \ref{lem:A=B}, we find that  
\begin{align*}
& \prod_{i=1}^{n} \GR(2s+a_2+\mu_i)  \cdot T_{n, \mu} (\bs; s,\sigma,a_1,a_2) 
\\
& = \GR(s-s_{n-1}+\sigma+a_2) \GR(2s-s_n+\sigma+2a_2)
\\
& \times 
  \frac{1}{ (4\pi \sqrt{-1})^{n-1}} \int_{r_1,\dotsc,r_{n-1}} 
    \prod_{i=1}^{n-1} \GR(r_i-s_i)
    \prod_{i=1}^{n-1} \GR(s+r_i-s_{i+1}+a_2) \\
& \times  \prod_{i=1}^{n-1} \GR(s+r_i-s_{i-1}+a_2)  \prod_{i=2}^{n-1} \GR(2s+ r_i-s_i+2a_2)  \\
& \times\prod_{i=1}^{n-2} \frac{1}{  \GR(2s+r_i+r_{i+1}-s_i-s_{i+1}+2a_2) }
   \cdot \frac{1}{ \GR(2s+r_{n-1}-s_{n-1}-s_n+\sigma+2a_2) }
\\
& \times 
  \frac{1}{(4\pi \sqrt{-1})^{n-1}} \int_{q_1,\dotsc,q_{n-1}}
    V_{ {n},\mu}^A (\bq)  \GR(ns-q_{n-1}-\sigma+a_1) 
\\
& \times 
    \prod_{i=1}^{n-1} \GR(is-r_i-q_i) \prod_{i=1}^{n-1} \GR(is-r_i-q_{i-1}+a_1)   \prod_{i=1}^{n-1} dq_i \prod_{i=1}^{n-1} dr_i.
\end{align*}
In view of Proposition \ref{prop:exp_GLn}, we can perform the integrations over $ q_i $ $ (1\le i \le n-1)$,
yielding the desired result.
\epf

\subsection{Spherical Whittaker functions on $ {\rm SO}_{2n}(\R) $ and $ {\rm Sp}_{2n}(\R) $}  
\label{subsec:SO2nSp2n_Wh}
In this subsection, we present the explicit formulas for
$ V_{n,\nu}^{C} $ and $ V_{n,\nu}^D $ established in the papers \cite{I25a} and \cite{I25b}, 
and further derive a new formula for $ V_{n,\nu}^D $ that will be useful in the computation of
archimedean zeta integrals.

\begin{prop} $($\cite[Theorem 4.1]{I25b}$)$ \label{prop:exp_SO2n}
Let $ n \ge 3 $. For $ \bs = (s_1,\dotsc,s_n) \in \C^n $ and $ \bt = (t_1,\dotsc,t_{n-1}) \in \C^{n-1} $, we have 
\begin{align*}
 V_{n,\nu}^D(\bs)
& = \frac{1 }{ (4\pi \sqrt{-1})^{n-1}} \int_{t_1,\dotsc, t_{n-1}} V_{n-1, (\nu_1,\dotsc,\nu_{n-1}) }^D (\bt) 
    L_{n, \nu_n}(\bs, \bt) \prod_{i=1}^{n-1} dt_i,
\end{align*}
where 
\begin{align*}
   L_{n,\nu_n} (\bs, \bt)
& = \prod_{i=1}^{n-3} \frac{\GR(s_i-t_i) \GR(s_{i+1}-t_{i-1}) }{ \GR(s_i+s_{i+1}-t_{i-1}-t_i)} 
  \prod_{i=1}^{n-2}  \GR(s_i-t_{i-1}+\nu_{n}) \prod_{i=1}^{n-2}  \GR(s_i-t_{i-1} -\nu_{n}) 
\\
& \times \frac{ \GR(s_{n-2}-t_{n-2}-t_{n-1}) \GR(s_{n-1}+s_{n}-t_{n-3})
    \GR(s_{n-1}-t_{n-2} + \frac{\nu_n}{2} )   }{ 
   \GR(s_{n-2}+s_{n-1}+s_{n} - t_{n-3}-t_{n-2} -t_{n-1})} 
\\
& \times 
 \GR(s_{n-1}-t_{n-1} - \tfrac{\nu_n}{2})  \GR(s_{n}-t_{n-2} - \tfrac{\nu_n}{2} )  \GR(s_{n}-t_{n-1} + \tfrac{\nu_n}{2}),
\end{align*}
and 
\begin{align*}
 V_{2, (\nu_1,\nu_2)}^D (s_1,s_2) & = 
   \GR(s_1+\tfrac{\nu_1-\nu_2}{2})\GR(s_1-\tfrac{\nu_1-\nu_2}{2}) 
   \GR(s_2+\tfrac{\nu_1+\nu_2}{2})\GR(s_2-\tfrac{\nu_1+\nu_2}{2}).
\end{align*}
Here we understand that $ t_0 = 0$.
\end{prop}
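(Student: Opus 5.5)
The plan is to prove the recursion by induction in stages, realizing the rank-$n$ spherical Whittaker function as an integral transform of the rank-$(n-1)$ one, and then computing the Mellin transform of that transform explicitly. Let $P=MN$ be the maximal parabolic of ${\rm SO}_{2n}(\R)$ whose Levi component is ${\rm GL}_1(\R)\times{\rm SO}_{2n-2}(\R)$, embedded as the first and last coordinates. Then $\pi_\nu$ is isomorphic to the induction from $P$ of the character $|\cdot|^{\nu_n}$ (suitably $\rho$-shifted) tensored with $\pi_{(\nu_1,\dots,\nu_{n-1})}$ on ${\rm SO}_{2n-2}$.

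First, up to relabelling of the coordinates, the Jacquet integral factors through this parabolic. This gives, up to a normalizing constant, a formula
\[
 W^D_{n,\nu}(g)=\int_{N} W^D_{n-1,(\nu_1,\dots,\nu_{n-1})}\bigl(m(w n g)\bigr)\,e^{\nu_n+\rho_P}\bigl(a(wng)\bigr)\,\overline{\psi(n)}\,dn,
\]
where $w$ is a Weyl element representing the relevant Bruhat cell, and $m(\cdot)$, $a(\cdot)$ are the ${\rm SO}_{2n-2}$ and ${\rm GL}_1$ components of the Iwasawa decomposition relative to $P$. Second, $N$ has dimension $2n-2$, so I will write $n\in N$ in coordinates and compute the Iwasawa decomposition of $wn\,\beta[a_1,\dots,a_n]$ explicitly. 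After a unipotent change of variables, the torus part of the ${\rm SO}_{2n-2}$ component should be expressible through partial sums of squares of the coordinates of $n$. Third, I will insert the Mellin–Barnes representation of $\widetilde W^D_{n-1}$ (variables $t_1,\dots,t_{n-1}$) and carry out the $N$-integration coordinate by coordinate. Each coordinate contributes either a Gamma integral or a Fourier transform of a power of $(1+x^2)$, whose Mellin transform is a product of $\GR$'s. Repeated use of Lemma \ref{lem:1st} should then collapse these into the kernel $L_{n,\nu_n}(\bs,\bt)$.

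The main obstacle is the bookkeeping at the end of the Dynkin diagram, namely the coordinates involving $u_{n-1,n}$ and $u_{n-1,n+1}$ where the diagram forks. This is exactly where the half-shifts $\pm\nu_n/2$ and the non-product factor
\[
 \frac{\GR(s_{n-2}-t_{n-2}-t_{n-1})\,\GR(s_{n-1}+s_n-t_{n-3})}{\GR(s_{n-2}+s_{n-1}+s_n-t_{n-3}-t_{n-2}-t_{n-1})}
\]
arise. I would first treat $n=3$ completely. There I can cross-check against ${\rm SO}(3,3)\simeq {\rm SL}_4(\R)/\{\pm1\}$ and the ${\rm GL}_4$ formula of Proposition \ref{prop:exp_GLn}, using $V^D_2$ as stated. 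I would then isolate the last two coordinates as a separate two-variable Barnes integral, which should be evaluable by Lemma \ref{lem:int1}(i) in the same way as in the proof of Lemma \ref{lem:A=B}.

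If the direct Iwasawa computation proves unwieldy, the fallback is a uniqueness argument. I would show that the right-hand side, inserted into the Mellin–Barnes integral for $\widetilde W^D_{n,\nu}$, satisfies the difference equations in $\bs$ that encode the Casimir (and Pfaffian-type) eigen-equations of $\pi_\nu$. I would also check that it has moderate growth. Multiplicity one of moderate-growth Whittaker functions then gives the identity up to a scalar, and that scalar is fixed by comparing residues at the leading pole.
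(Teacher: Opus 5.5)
The paper gives no proof of this proposition. It quotes it from \cite[Theorem 4.1]{I25b}, and here it also fixes the normalization of $W^D_{n,\nu}$ (see \S\ref{subsec:classoneWh}). So what must be shown is that the function defined by the recursive Mellin--Barnes integral is a nonzero, right $K$-invariant element of $W(\pi_\nu,\psi)$. No constant has to be matched, and your residue comparison is unnecessary.

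Measured against that, your text is a plan, and the step that carries all the content is missing: nothing explains why the output is exactly $L_{n,\nu_n}$. The ratio $\GR(s_i-t_i)\GR(s_{i+1}-t_{i-1})/\GR(s_i+s_{i+1}-t_{i-1}-t_i)$, the two families $\GR(s_i-t_{i-1}\pm\nu_n)$, and the fork factors with $\pm\nu_n/2$ are all left to ``should collapse''.

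Moreover, the mechanism you describe does not work as stated in type $D$. On the big cell, the ${\rm GL}_1$-component of $wn$ is governed by the Euclidean length of an isotropic vector $(1,x,-Q(x))$, that is, by $(1+|x|^2+Q(x)^2)^{1/2}$ with $Q$ the split form on $\R^{2n-2}$. This is a product of one-variable factors only when $2n-2=2$, which lies outside the range of the recursion. The ${\rm SO}_{2n-2}$-component is not in $A$, so it needs its own Iwasawa decomposition. Its torus part comes from Gram determinants of the bottom rows of $wn$, which contain quartic $Q$-type terms rather than partial sums of squares. Its unipotent part contributes a phase $\psi(u'(x))$, rational in $x$, on top of $\overline{\psi(n)}$. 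Hence the $N$-integral does not split, as it stands, into one-variable Gamma integrals and Fourier transforms of $(1+x^2)^{-s}$. You would need an explicit chain of nonlinear substitutions that also tracks these phases, followed by a multi-step Barnes reduction, and that is exactly what is absent.

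A smaller point: with ${\rm GL}_1$ in the first and last coordinates, the inducing character carries $\nu_1$, not $\nu_n$. Also, $w_0w_{0,M}$ acts on the $D_{n-1}$ factor by its outer automorphism. So you must use $\pi_\nu\simeq\pi_{w\nu}$ and track a single sign flip of the ${\rm SO}_{2n-2}$ parameter, equivalently the swap $t_{n-2}\leftrightarrow t_{n-1}$ in $V^D_{n-1}$.

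Your fallback is a legitimate scheme in principle, since uniqueness of the moderate-growth spherical Whittaker function would close it, but only its frame is given. You would need the radial parts of a full generating set of $Z(\mathfrak{so}_{2n})$ as difference operators on the Mellin--Barnes kernel, with the contour shifts justified. That set consists of the Casimirs of degrees $2,4,\dots,2n-2$ and the Pfaffian of degree $n$; the quadratic Casimir alone does not fix the infinitesimal character. You would then have to prove that the recursively defined $V^D_{n,\nu}$ satisfies all of them. That requires an inductive mechanism by which $L_{n,\nu_n}$ intertwines the rank-$n$ operators with the rank-$(n-1)$ ones, and this mechanism is the missing idea.

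Your $n=3$ check via ${\rm SO}(3,3)\simeq{\rm SL}_4(\R)/\{\pm1\}$ and Proposition~\ref{prop:exp_GLn} is sound, and could in principle be turned into a proof of that single case. It does not propagate to $n\ge 4$.
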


The Mellin-Barnes kernel $ V_{n,\nu}^C $ is  expressible as an integral involving $V_{n,\nu}^D $, via

\begin{prop} $($\cite[Corollary 2.4]{I25a}$)$ \label{prop:exp_Sp2n}
Let $ n \ge 2 $. For $ \bs = (s_1,\dotsc,s_n) \in \C^n $ and $ \bt = (t_1,\dotsc,t_{n}) \in \C^{n} $, we have 
\begin{align*}
 V_{n,\nu}^C(\bs) 
& = \frac{1}{ (4\pi \sqrt{-1})^n } \int_{t_1,\dotsc,t_n} V_{n,\nu}^D(\bt) \prod_{i=1}^{n-2} \GR(s_i-t_i) \\
& \times   \GR(s_{n-1}-t_{n-1}-t_n) 
    \GR(s_n-t_n) \prod_{i=1}^n \GR(s_i-t_{i-1})   \prod_{i=1}^n dt_i,
\end{align*}
where we understand $ t_0 = 0 $.
\end{prop}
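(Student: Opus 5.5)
The plan is to argue by induction on $n$. I would compare the recursive Mellin--Barnes structure of $V^C_{n,\nu}$ with that of $V^D_{n,\nu}$ in Proposition \ref{prop:exp_SO2n}, and collapse the extra integrations using Barnes' first lemma (Lemma \ref{lem:1st}) and the transformation in Lemma \ref{lem:int1}. As input I need an independent description of $V^C_{n,\nu}$. The natural one is the recursive formula of \cite{I25a}, which expresses $V^C_{n,\nu}$ as an integral of $V^C_{n-1,(\nu_1,\dots,\nu_{n-1})}$ against an explicit gamma-kernel $L^C_{n,\nu_n}$. I would take this as the definition, since the normalization of the spherical Whittaker function on ${\rm Sp}_{2n}(\R)$ is fixed through such a formula. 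Failing that, one could fall back on uniqueness: the moderate-growth solution of the ${\rm Sp}_{2n}$ Toda/Casimir system is unique, and the right-hand side would be checked against the difference equations in $\bs$ that this system induces on Mellin kernels.

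\emph{Base case $n=2$.} Here $V^D_{2,\nu}(t_1,t_2)$ is the explicit product of four $\GR$'s. The right-hand side becomes a double integral in $t_1,t_2$ with kernel
\[
\GR(s_1-t_1-t_2)\GR(s_2-t_2)\GR(s_1)\GR(s_2-t_1)\,V^D_{2,\nu}(t_1,t_2).
\]
I would integrate first in $t_1$. The factors $\GR(t_1\pm\frac{\nu_1-\nu_2}{2})\GR(s_1-t_1-t_2)\GR(s_2-t_1)$ fit Barnes' first lemma exactly, producing a quotient with denominator $\GR(s_1+s_2-t_2)$. The remaining $t_2$-integral is of type $I_1$. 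Its parameter sum is not balanced, so Barnes' second lemma does not apply directly, and I would use Lemma \ref{lem:int1}(i) to bring it into the shape of the known ${\rm Sp}_4$ kernel $V^C_{2,\nu}$ from \cite{I25a}.

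\emph{Inductive step.} Insert Proposition \ref{prop:exp_SO2n} for $V^D_{n,\nu}(\bt)$ on the right-hand side, giving an integral over $\bt$ and the inner variables $\bu\in\C^{n-1}$ of
\[
V^D_{n-1,\tilde\nu}(\bu)\, L_{n,\nu_n}(\bt,\bu)\cdot(\text{kernel of the statement}).
\]
Then interchange the order of integration so that the $\bt$-integrals are done first. For $i\le n-3$, each $t_i$ appears in a block of the form $\GR(s_i-t_i)\GR(s_{i+1}-t_i)\GR(t_i-u_i)\GR(t_{i+1}-u_{i-1})/\GR(\cdots)$ together with the $\pm\nu_n$ factors. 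I would use the same device as in the proof of Proposition \ref{prop:new_GLn}: read a single Barnes first lemma backwards to split the denominator $\GR(t_i+t_{i+1}-u_{i-1}-u_i)$, then apply it forwards in $t_i$. The three ``boundary'' variables $t_{n-2},t_{n-1},t_n$ carry the special $\pm\nu_n/2$ factors and the coupled argument $s_{n-1}-t_{n-1}-t_n$. I would handle them with one application of Lemma \ref{lem:int1}(i), or (ii) if needed, exactly as in the inductive step of Lemma \ref{lem:A=B}. The goal is to reach an integrand of the form
\[
\Bigl[\text{statement's kernel at rank } n-1,\ \text{applied to } V^D_{n-1,\tilde\nu}\Bigr]\times L^C_{n,\nu_n}(\bs,\cdot).
\]
By the induction hypothesis the bracket equals $V^C_{n-1,\tilde\nu}$, and the recursive formula for $V^C$ then gives $V^C_{n,\nu}(\bs)$. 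Convergence and the legitimacy of the interchanges are handled by the standing conventions of Section \ref{sec:Wh}.

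The main obstacle is the bookkeeping at the boundary, namely the variables $t_{n-2},t_{n-1},t_n$. There the ${\rm SO}_{2n}$ kernel has its fork structure, with the sum $s_{n-1}+s_n$ and the half-shifts $\pm\nu_n/2$, while the ${\rm Sp}_{2n}$ kernel has a single long-root variable. Finding the right order of integration and the right Barnes transformation so that the $\nu_n/2$ shifts combine into the full $\pm\nu_n$ factors of $L^C_{n,\nu_n}$ is the delicate point. I would first test the ordering $t_n$, then $t_{n-1}$, then $t_{n-2}$ on the case $n=3$ before writing the general step.
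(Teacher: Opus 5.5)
The paper gives no proof of this proposition. It is quoted from \cite{I25a}, and in Section~\ref{sec:Wh} it is one of the formulas used to \emph{fix} the scalar of $W^C_{n,\nu}$. Its real content is therefore that the right-hand side is the Mellin kernel of the spherical Whittaker function of $\pi_\nu$ on ${\rm Sp}_{2n}(\R)$. Any proof has to tie it to an independent characterization of that function.

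Your proposal names such a characterization only by reference: a recursion $V^C_{n,\nu}=\int V^C_{n-1,\tilde\nu}\,L^C_{n,\nu_n}$ and a ``known'' ${\rm Sp}_4$ kernel. You never write down $L^C_{n,\nu_n}$ or $V^C_{2,\nu}$, so neither step has a concrete target. Your base case stops at
\[
\GR(s_1)\GR(s_2+\tfrac{\nu_1-\nu_2}{2})\GR(s_2-\tfrac{\nu_1-\nu_2}{2})\, I_1\bigl(s_2,s_1+\tfrac{\nu_1-\nu_2}{2},s_1-\tfrac{\nu_1-\nu_2}{2};\tfrac{\nu_1+\nu_2}{2},-\tfrac{\nu_1+\nu_2}{2};s_1+s_2\bigr).
\]
This is a correct reduction, but it becomes a proof only once it is matched, constant included, with an independently established ${\rm Sp}_4$ formula. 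The Toda-uniqueness fallback is only mentioned. It would need Mellin-side difference equations for a full set of generators of the center of $U(\mathfrak{sp}_{2n})$, plus a growth and multiplicity-one argument to fix the constant.

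The substance lies in the inductive step, and it is left undone. Write $K_n(\bs,\bt)$ for the kernel in the statement. What you need is an exchange identity of the form
\[
\frac{1}{(4\pi\sqrt{-1})^{n}}\int_{\bt}L_{n,\nu_n}(\bt,\bu)\,K_n(\bs,\bt)\prod_{i=1}^n dt_i
=\frac{1}{(4\pi\sqrt{-1})^{n-1}}\int_{\bv}K_{n-1}(\bv,\bu)\,L^C_{n,\nu_n}(\bs,\bv)\prod_{i=1}^{n-1}dv_i .
\]
This is an identity of the same type as Lemma~\ref{lem:keylemma_Pieri_SO2n}. In the paper, that lemma required a chain of applications of Stade's transformation (Lemma~\ref{lem:int3}) followed by Barnes' second lemma.

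You explicitly defer the fork $t_{n-2},t_{n-1},t_n$, which is exactly where this identity lives. Moreover, the moves you list cannot produce it as described. Splitting a denominator by Barnes' first lemma and then integrating forward preserves the number of integrations, and so does applying Lemma~\ref{lem:int1}(i) or (ii). But the left side has $n$ integrations over $\bt$ and the right side only $n-1$ over $\bv$. Somewhere a genuine closed-form evaluation must occur. Your outline does not say where, nor why the $\pm\nu_n/2$ shifts recombine into the factors of $L^C_{n,\nu_n}$.

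As it stands, the proposal is a strategy whose decisive identity is neither stated precisely nor verified.
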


The following proposition, which is an ${\rm SO}_{2n}$-analog of Proposition \ref{prop:Pieri_GLn}, plays a crucial role in this paper.

\begin{prop} \label{prop:Pieri_SO2n}
Let $ n\ge 2 $. For $ \bs = (s_1,\dotsc,s_n) \in \C^n $, $ \bt = (t_1,\dotsc,t_{n}) \in \C^{n} $ and $\sigma \in \C$, we have
\begin{align*}
& \prod_{i=1}^n \GR(\sigma + \nu_i) \prod_{i=1}^n \GR(\sigma-\nu_i)  \cdot V_{n, \nu}^D (\bs)
\\
& = \GR(2\sigma) \cdot \frac{1}{(4\pi \sqrt{-1})^{n}} \int_{t_1,\dotsc,t_n} V_{n,\nu}^D (\bt) 
    M_n(\bs, \bt; \sigma)  \prod_{i=1}^n dt_i,
\end{align*}
where
\begin{align*}
 M_n(\bs, \bt; \sigma)
& =
 \prod_{i=1}^{n} \GR(s_i-t_i)  \prod_{i=2}^{n-2} \GR(s_i-t_i+2\sigma)\\
& \times  \prod_{i=1}^{n-2} \GR(s_{i-1} -t_i + \sigma)   \cdot \GR(s_{n-2} -t_{n-1}-t_n + \sigma) \GR(s_{n-1}-t_{n}+\sigma) 
\\
& \times  \prod_{i=1}^{n-2} \GR(s_{i}-t_{i-1} +\sigma) \cdot  \GR(s_{n-1}+s_n-t_{n-2}+\sigma) \GR(s_n-t_{n-1}+\sigma)   
\\
& \times \prod_{i=1}^{n-3} \frac{1}{  \GR(s_i+s_{i+1}-t_i-t_{i+1} +2\sigma) }
\\
& \times \frac{ 1}{ \GR(s_{n-2}+s_{n-1}+s_{n} -t_{n-2}-t_{n-1}-t_n+2\sigma) }
 \end{align*}
and we understand that $ s_0 = t_0= 0 $.
\end{prop}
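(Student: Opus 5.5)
Induction on $n$ gives the proof, following the strategy used for the ${\rm GL}_n$ analogue, Proposition \ref{prop:Pieri_GLn}. Proposition \ref{prop:exp_SO2n} provides the recursion in $n$: it writes $V^D_{n,\nu}$ as an integral of $V^D_{n-1,\tilde\nu}$ against the kernel $L_{n,\nu_n}$. The factor $\prod_{i}\GR(\sigma\pm\nu_i)$ splits as $\prod_{i\le n-1}\GR(\sigma\pm\nu_i)$ times $\GR(\sigma+\nu_n)\GR(\sigma-\nu_n)$. The first part is absorbed by the induction hypothesis for $V^D_{n-1,\tilde\nu}$. The second part has to be absorbed by transformations of the kernel $L_{n,\nu_n}$.

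\emph{Base case $n=2$.} Since $V^D_{2,\nu}(s_1,s_2)$ is a product of two ${\rm GL}_2$-type kernels, in the variables $\frac{\nu_1-\nu_2}{2}$ and $\frac{\nu_1+\nu_2}{2}$, the identity should reduce to Barnes' first lemma (Lemma \ref{lem:1st}) and Lemma \ref{lem:int1}. Concretely, I would substitute the explicit kernel into the right-hand side, with $M_2(\bs,\bt;\sigma)=\GR(s_1-t_1)\GR(s_2-t_2)\GR(-t_1-t_2+\sigma)\GR(s_1-t_2+\sigma)\GR(s_1+s_2+\sigma)\GR(s_2-t_1+\sigma)/\GR(s_1+s_2-t_1-t_2+2\sigma)$ (reading the $n=2$ conventions with $s_0=t_0=0$). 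I would do the $t_1$-integral first via Lemma \ref{lem:int1}(iii), and then the $t_2$-integral via Barnes' first lemma. The balancing condition of Barnes' second lemma should hold automatically thanks to the $2\sigma$ in the denominator. The outcome should be the factor $\GR(\sigma\pm\nu_1)\GR(\sigma\pm\nu_2)/\GR(2\sigma)$ times $V^D_{2,\nu}(\bs)$. If this does not close directly, I would first expand the denominator $\GR(\cdot+2\sigma)$ by Barnes' first lemma to get two separable integrals.

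\emph{Inductive step.} Assume the identity for $n-1$. Starting from the right-hand side, I insert Proposition \ref{prop:exp_SO2n} for $V^D_{n,\nu}(\bt)$, which introduces $V^D_{n-1,\tilde\nu}(\bu)$ with new variables $\bu$. I then interchange the order of integration, so that the $\bt$-integral of $L_{n,\nu_n}(\bt,\bu)M_n(\bs,\bt;\sigma)$ becomes a kernel $K(\bs,\bu)$. The goal is to show
\[
 K(\bs,\bu)=\frac{\GR(\sigma+\nu_n)\GR(\sigma-\nu_n)}{\GR(2\sigma)}\cdot\frac{1}{(4\pi\sqrt{-1})^{n-1}}\int_{\bv}M_{n-1}(\bv',\bu;\sigma)\,L_{n,\nu_n}(\bs,\bv)\,d\bv
\]
for an appropriate intermediate variable $\bv$, where $\bv'$ is the relevant truncation (and the factor $\GR(2\sigma)$ is handled as in the ${\rm GL}_n$ case). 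Once this holds, the induction hypothesis collapses the $\bu$-integral against $M_{n-1}$, turning $\prod_{i\le n-1}\GR(\sigma\pm\nu_i)V^D_{n-1,\tilde\nu}$ into the required form. A final application of Proposition \ref{prop:exp_SO2n} then reassembles $V^D_{n,\nu}(\bs)$.

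The kernel identity itself would be proved variable by variable, in the same way as Lemma \ref{lem:A=B}. For the generic indices $i\le n-3$, I would apply Barnes' first lemma to each quotient of the form $\GR\GR\GR\GR/\GR$; this is the same local move used in the proof of Proposition \ref{prop:new_GLn}. The three "tail" variables $t_{n-2},t_{n-1},t_n$ come with the denominator $\GR(s_{n-2}+s_{n-1}+s_n-\cdots)$ and carry the $\pm\nu_n/2$ shifts. For these I would use Lemma \ref{lem:int1}(ii) and Barnes' second lemma, and, for the terms where $\GR(2\cdot)$ appears, Lemmas \ref{lem:int3} and \ref{lem:int4}.

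I expect the tail of the inductive step to be the main obstacle: the fork of the $D_n$ diagram makes the variables $t_{n-1},t_n$ interact through $\GR(s_{n-2}-t_{n-1}-t_n+\sigma)$. To handle it, I would first treat the small case $n=3$ explicitly, which works out the tail transformation once. For general $n$ I would then peel off the generic variables one at a time, proving an auxiliary lemma analogous to Lemma \ref{lem:A=B} by its own induction.
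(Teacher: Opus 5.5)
\textbf{Genuine gap: the kernel identity, which is the whole inductive step, is misnormalized and not actually proved.}

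\textbf{The outer reduction.} Your plan matches the paper's: insert Proposition~\ref{prop:exp_SO2n}, interchange integrals, prove a kernel identity, apply the induction hypothesis, reassemble. However, the kernel identity you wrote is off by a factor $\GR(2\sigma)$. The induction hypothesis gives
$\frac{1}{(4\pi\sqrt{-1})^{n-1}}\int_{\bu}V^D_{n-1,\tilde\nu}(\bu)M_{n-1}(\bv,\bu;\sigma)\,d\bu=\GR(2\sigma)^{-1}\prod_{i=1}^{n-1}\GR(\sigma+\nu_i)\GR(\sigma-\nu_i)\,V^D_{n-1,\tilde\nu}(\bv)$.
That $\GR(2\sigma)^{-1}$ already cancels the $\GR(2\sigma)$ in front of the proposition. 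So the identity you need (Lemma~\ref{lem:keylemma_Pieri_SO2n}, with the integration variable renamed) is
\begin{align*}
&\frac{1}{(4\pi\sqrt{-1})^{n}}\int_{\bt}L_{n,\nu_n}(\bt,\bu)\,M_n(\bs,\bt;\sigma)\,d\bt\\
&\qquad=\GR(\sigma+\nu_n)\GR(\sigma-\nu_n)\cdot\frac{1}{(4\pi\sqrt{-1})^{n-1}}\int_{\bv}L_{n,\nu_n}(\bs,\bv)\,M_{n-1}(\bv,\bu;\sigma)\,d\bv .
\end{align*}
Here $\bv\in\C^{n-1}$ already, so no truncation is involved. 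With your extra $1/\GR(2\sigma)$, the induction would return $\GR(2\sigma)^{-1}\prod_i\GR(\sigma+\nu_i)\GR(\sigma-\nu_i)\,V^D_{n,\nu}(\bs)$, which is the wrong constant.

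\textbf{The missing mechanism.} Nothing in your sketch proves the identity above, and the tools you name do not fit.
\begin{itemize}
\item For $2\le i\le n-3$, with the other variables fixed, the $t_i$-integrand has eight numerator and four denominator gamma factors, coupled to $t_{i\pm1}$. It is not a Barnes first-lemma integral, nor any of $I_1,\dots,I_4$.
\item Expanding quotients by Barnes' first lemma in reverse, as in Proposition~\ref{prop:new_GLn}, only adds variables. There they are removed by Proposition~\ref{prop:Pieri_GLn} acting on a ${\rm GL}_n$ kernel. Here no Whittaker kernel is present, and the ${\rm SO}_{2n}$ analogue is exactly what is being proved.
\item No doubled variable $\GR(2t_i+\cdot)$ occurs, so Lemma~\ref{lem:int4} is irrelevant.
\end{itemize}
Since the identity trades $n$ variables for $n-1$, it must be a chain of \emph{transformations} ending in a single evaluation.

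The missing idea is a cascade. At the boundary, the $t_1$-integrand is a balanced $I_3$, so you transform it (not evaluate it) by Lemma~\ref{lem:int3}. The gamma prefactors this produces cancel enough factors in the $t_2$-integrand to make it a balanced $I_3$ in turn, and so on. The paper sweeps forward through $t_1,\dots,t_{n-3},t_{n-2},t_{n-1},t_n$, with affine reflections of the two fork variables, and then back through $t_{n-2},\dots,t_2$. Only after this is the $t_1$-integral a Barnes second-lemma integral. Its evaluation gives $\GR(\sigma-\nu_n)$; the first transformation supplied $\GR(\sigma+\nu_n)$. The relabelling $t_i\to v_{i-1}-\nu_n$ $(2\le i\le n-2)$, $t_{n-1}\to v_{n-2}$, $t_n\to v_{n-1}$ then yields $L_{n,\nu_n}(\bs,\bv)M_{n-1}(\bv,\bu;\sigma)$.

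\textbf{Base case.} After the $t_1$-integration, the $t_2$-integrand still has five numerator factors and $\GR(s_2-t_2+2\sigma)$ in the denominator, and it is again balanced. So it is Barnes' second lemma, not the first, that finishes, exactly as in the paper.
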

\bpf
We prove the assertion by induction on $n$. The case $n=2$ is a consequence of Barnes' second lemma, applied twice in succession:

\begin{align*}
& \frac{1}{ (4\pi \sqrt{-1})^2} \int_{t_1,t_2} V_{2,(\nu_1,\nu_2)}^D(t_1,t_2) M_2((s_1,s_2), (t_1,t_2);\sigma) \,dt_1dt_2
\\
& = \frac{ 1 }{ (4\pi \sqrt{-1})^2} \int_{t_1,t_2}
      \GR(t_1+\tfrac{\nu_1-\nu_2}{2})\GR(t_1-\tfrac{\nu_1-\nu_2}{2}) 
   \GR(t_2+\tfrac{\nu_1+\nu_2}{2})\GR(t_2-\tfrac{\nu_1+\nu_2}{2})
\\
& \times \frac{ \GR(s_1-t_1) \GR(s_2-t_2) \GR(-t_1-t_2+\sigma) \GR(s_1-t_2+\sigma) }{ \GR(s_1+s_2-t_1-t_2+2\sigma)} \\
& \times \GR(s_1+s_2+\sigma) \GR(s_2-t_1+\sigma) \,dt_1dt_2 \\
& = \GR(s_1+\tfrac{\nu_1-\nu_2}{2}) \GR(s_1-\tfrac{\nu_1-\nu_2}{2}) \GR( s_2+\tfrac{\nu_1-\nu_2}{2}+\sigma)  
    \GR( s_2-\tfrac{\nu_1-\nu_2}{2}+\sigma) \\
& \times 
\frac{1}{4\pi \sqrt{-1}}\int_{t_2} \frac{ \GR(-t_2+\tfrac{\nu_1-\nu_2}{2}+\sigma) \GR(-t_2-\tfrac{\nu_1-\nu_2}{2}+\sigma) 
 \GR(s_2-t_2)  }{  \GR(s_2-t_2+2\sigma) } 
\\
& \times \GR(t_2+\tfrac{\nu_1+\nu_2}{2})\GR(t_2-\tfrac{\nu_1+\nu_2}{2}) \,dt_2
\\
& = \GR(s_1+\tfrac{\nu_1-\nu_2}{2}) \GR(s_1-\tfrac{\nu_1-\nu_2}{2})  \GR(s_2+ \tfrac{\nu_1+\nu_2}{2}) \GR(s_2-\tfrac{\nu_1+\nu_2}{2}) \\
& \times 
   \frac{ \GR(\sigma+\nu_1) \GR(\sigma+\nu_2)\GR(\sigma-\nu_1) \GR(\sigma-\nu_2)}{\GR(2\sigma)}.
\end{align*}
This proves the case $ n=2 $. 

Now assume that $n\ge 3 $, and that the assertion holds for $n-1$.  We will need the following lemma, which  plays a key role in the proof of Proposition \ref{prop:Pieri_SO2n}.
\begin{lem} \label{lem:keylemma_Pieri_SO2n}
Let  $ n \ge 3 $.
For $ \bs= (s_1,\dotsc, s_n) \in \C^n$, $ \bt = (t_1,\dotsc, t_n) \in \C^n$,
$ \bu = (u_1,\dotsc,u_{n-1})\in \C^{n-1} $ and $ \bv = (v_1,\dotsc,v_{n-1})  \in \C^{n-1} $, we have
\begin{align} \label{eqn:keylemma_Pieri_SO2n}
\begin{split}
& \frac{1}{ (4\pi \sqrt{-1})^{n}} 
  \int_{s_1,\dotsc, s_n} L_{n,\nu_n} (\boldsymbol{s}, \boldsymbol{u}) M_n(\boldsymbol{t}, \boldsymbol{s}; \sigma) \prod_{i=1}^n ds_i
\\
& = \GR(\sigma+\nu_n) \GR(\sigma-\nu_n) \cdot
   \frac{1}{ (4\pi \sqrt{-1})^{n-1} } \int_{v_1,\dotsc,v_{n-1}} L_{n,\nu_n}(\boldsymbol{t}, \boldsymbol{v} ) 
  M_{n-1} (\boldsymbol{v},  \boldsymbol{u}; \sigma) \prod_{i=1}^{n-1} dv_i,
\end{split}
\end{align}
where we understand that $ s_0 = t_0 = u_0 = v_0 = 0$.
\end{lem}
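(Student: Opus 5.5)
We will prove \eqref{eqn:keylemma_Pieri_SO2n} by a direct, variable-by-variable evaluation of the left-hand side using Barnes' first and second lemmas together with Lemma \ref{lem:int1}. The strategy mirrors the proof of Lemma \ref{lem:A=B}: integrate out the variables $s_1,\dotsc,s_n$ one at a time, introducing the auxiliary variables $v_1,\dotsc,v_{n-1}$ along the way by \emph{expanding} certain quotients of gamma functions into Barnes integrals (via Lemma \ref{lem:1st} read from right to left). At the end, the kernel in $(\bt,\bv,\bu)$ should match $L_{n,\nu_n}(\bt,\bv)M_{n-1}(\bv,\bu;\sigma)$.

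Concretely, we first write out the product $L_{n,\nu_n}(\bs,\bu)M_n(\bt,\bs;\sigma)$ and sort the factors according to which $s_i$ they contain. The generic block, for $1\le i\le n-3$, involves $s_i$ through the factors
\[
\GR(s_i-u_i),\quad \GR(s_i-u_{i-1}\pm\nu_n),\quad \GR(t_i-s_i),\quad \GR(t_i-s_i+2\sigma),\quad \GR(t_{i-1}-s_i+\sigma),\quad \GR(t_{i+1}-s_i+\sigma),
\]
together with the denominators $\GR(s_i+s_{i+1}-u_{i-1}-u_i)$ and $\GR(t_i+t_{i+1}-s_i-s_{i+1}+2\sigma)$. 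For each such $i$ we will use Lemma \ref{lem:1st} in reverse to rewrite a quotient of the shape $\GR\GR\GR\GR/\GR$ as an integral over a new variable $v_i$. This is exactly as in the proof of Proposition \ref{prop:new_GLn}, where this device decouples adjacent $s$-variables. After decoupling, each $s_i$-integral becomes a single Barnes integral of type $I_1$, or a balanced one handled by Barnes' second lemma (Lemma \ref{lem:int1}(iii)). The balancing condition $a+b+c+d+e=f$ should follow from the $2\sigma$-shifts, as it did in the $n=2$ case of Proposition \ref{prop:Pieri_SO2n}. Each such evaluation should produce factors like $\GR(t_i-v_i)$, $\GR(t_i-v_{i-1}\pm\nu_n)$ (pieces of $L_{n,\nu_n}(\bt,\bv)$) and $\GR(v_i-u_i)$, $\GR(v_i-u_i+2\sigma)$, $\GR(v_{i-1}-u_i+\sigma)$ (pieces of $M_{n-1}(\bv,\bu;\sigma)$).

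We will organize this as an induction on $n$ in the same way as Lemma \ref{lem:A=B}. We treat the three "tail" variables $s_{n-2},s_{n-1},s_n$ (where the special $\nu_n/2$-shifted factors and the triple denominators live) first. In the tail we would apply Lemma \ref{lem:int1}(i) or (ii) to the $s_n$- and $s_{n-1}$-integrals, which carry $\GR(s_{n-1}-u_{n-2}+\nu_n/2)$, $\GR(s_{n-1}-u_{n-1}-\nu_n/2)$, $\GR(s_n-u_{n-2}-\nu_n/2)$, $\GR(s_n-u_{n-1}+\nu_n/2)$ against the $M_n$ factors. From these we expect the constant $\GR(\sigma+\nu_n)\GR(\sigma-\nu_n)$ to emerge, much as $\GR(\sigma\pm\nu_i)$ emerged in the $n=2$ computation. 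The $s_{n-2}$-integral, with denominator $\GR(s_{n-2}+s_{n-1}+s_n-\dotsb)$, should then reduce by Lemma \ref{lem:int2} or Lemma \ref{lem:int4} type transformations to the form producing $\GR(v_{n-3}-u_{n-2}-u_{n-1}+\sigma)$ and the triple denominator of $M_{n-1}$. For small $n$ (in particular $n=3$, where the generic block is empty and $M_{2}$ has its own special shape) we check the tail computation directly.

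The main obstacle is the tail: the $L_{n,\nu_n}$ tail has four factors with $\pm\nu_n/2$ and a three-variable denominator, while $M_n$ has $\GR(t_{n-2}-s_{n-1}-s_n+\sigma)$ coupling $s_{n-1}$ and $s_n$. It is not obvious which pair of variables to decouple first. We would first try substituting $s_n\to s_n-s_{n-1}$, so that the coupled factors depend on a single variable. We would then expand $\GR(s_{n-1}+s_n-u_{n-3})/\GR(s_{n-2}+s_{n-1}+s_n-\dotsb)$ by Lemma \ref{lem:1st} in reverse, and apply Lemma \ref{lem:int3} to the resulting balanced $I_3$-type integral. Checking that every intermediate integral is balanced, and that the remaining factors reassemble exactly into $L_{n,\nu_n}(\bt,\bv)M_{n-1}(\bv,\bu;\sigma)$, is the bookkeeping-heavy core of the argument.
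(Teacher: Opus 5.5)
There is a genuine gap. The proposal never supplies a mechanism that turns the $n$-fold integral with kernel $L_{n,\nu_n}(\bs,\bu)M_n(\bt,\bs;\sigma)$ into an $(n-1)$-fold integral with kernel $L_{n,\nu_n}(\bt,\bv)M_{n-1}(\bv,\bu;\sigma)$, and the mechanism you sketch does not work as described. Each adjacent pair $s_i,s_{i+1}$ is coupled by \emph{two} denominators: $\GR(s_i+s_{i+1}-u_{i-1}-u_i)$ from $L_{n,\nu_n}$, and $\GR(t_i+t_{i+1}-s_i-s_{i+1}+2\sigma)$ from $M_n$. (Your list of $s_i$-factors also omits $\GR(s_i-u_{i-2})$ and the two denominators shared with $s_{i-1}$.)

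One reverse application of Lemma~\ref{lem:1st} per $i$ removes only one of these couplings. The $s_i$-integrand is then left with four gamma factors in one direction, two in the other, and two denominators, which is neither an $I_1$ nor a Barnes second-lemma integral. Removing both couplings costs about $2(n-2)$ auxiliary variables. After integrating out $s_1,\dots,s_n$ with Barnes' first lemma, you would have more variables than the $n-1$ on the right, tied together by new denominators. Nothing in the proposal collapses this to $L_{n,\nu_n}(\bt,\bv)M_{n-1}(\bv,\bu;\sigma)$, so identifying your auxiliary variables with the final $v_i$ is unjustified.

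The induction ``as in Lemma~\ref{lem:A=B}'' also has no inductive statement. Removing a variable from $L_{n,\nu_n}$ and $M_n$ does not leave kernels of the same shape in rank $n-1$: the tail block is special, and the head carries $s_0=t_0=u_0=0$. In Lemma~\ref{lem:A=B}, by contrast, a single transformation in $r_{n-1}$ exhibited $A_{n-1}$ with $\sigma$ replaced by $r_{n-1}$.

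Finally, the constant does not come from the tail as you expect. In the untransformed integrand, every factor in which $s_{n-1}$ or $s_n$ enters with sign $+$ carries some $-u_j$, and every factor in which it enters with sign $-$ carries some $+t_j$. So the gamma factors that Lemmas~\ref{lem:1st}--\ref{lem:int4} extract from those integrals all involve $t$'s and $u$'s. A pure $\GR(\sigma\pm\nu_n)$ can only appear at the head, where $\GR(s_1\pm\nu_n)$ meets $\GR(\sigma-s_1)$.

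The missing idea is to use Lemma~\ref{lem:int3} as a \emph{transformation} that pushes the coupling along the chain, rather than decoupling and evaluating.

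The $s_1$-integral of $L_{n,\nu_n}(\bs,\bu)M_n(\bt,\bs;\sigma)$ is already an $I_3$ satisfying the balancing condition of Lemma~\ref{lem:int3}. Transforming it pulls out $\GR(\sigma+\nu_n)$. For $n\ge4$ it also pulls out a quotient with denominator $\GR(s_2-u_1-\nu_n)\GR(s_2)\GR(t_2-s_2+2\sigma)\GR(t_1-s_2+\sigma)$. This denominator cancels four factors of the $s_2$-integrand, so that, together with the new factors from the transformed $s_1$-integrand, the $s_2$-integral is again a balanced $I_3$.

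One then sweeps in the order $s_1\to\cdots\to s_{n-2}\to s_{n-1}\to s_n\to s_{n-2}\to\cdots\to s_2$, with shifts $s_{n-1}\to -s_{n-1}+s_{n-2}+\frac{\nu_n}{2}$ and $s_n\to -s_n+s_{n-2}+\frac{\nu_n}{2}$ in the tail. After $s_1\to -s_1$, this leaves a balanced $I_1$ in $s_1$. Barnes' second lemma evaluates it, yields $\GR(\sigma-\nu_n)$, and removes exactly one variable.

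No auxiliary variables are introduced. The $v$'s are the surviving $s_2,\dots,s_n$, renamed by $s_i\to v_{i-1}-\nu_n$ $(2\le i\le n-2)$, $s_{n-1}\to v_{n-2}$, $s_n\to v_{n-1}$.
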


Let us, for the moment, assume Lemma \ref{lem:keylemma_Pieri_SO2n}. We may then apply 
Proposition \ref{prop:exp_SO2n} and the induction hypothesis, to obtain 
\begin{align*}
& \frac{1}{(4\pi \sqrt{-1})^{n}} \int_{t_1,\dotsc,t_n} V_{n,\nu}^D  (\bt) 
    M_n(\bs, \bt; \sigma)  \prod_{i=1}^n dt_i
\\
& = \frac{1 }{(4\pi \sqrt{-1})^{2n-1}}  \int_{t_1,\dotsc, t_n, u_1,\dotsc, u_{n-1}} 
    V_{n-1,(\nu_1,\dotsc, \nu_{n-1}) }^D (\bu) L_{n,\nu_n}(\bt, \bu) M_n(\bs, \bt; \sigma)  \prod_{i=1}^n dt_i \prod_{i=1}^{n-1} du_i
\\
& = \GR(\sigma+\nu_n) \GR(\sigma-\nu_n) \cdot \frac{1 }{(4\pi \sqrt{-1})^{n-1}}  \int_{v_1,\dotsc, v_{n-1}} 
   L_{n,\nu_n}(\bs, \bv) \\
& \times \biggl\{ \frac{1}{ (4\pi \sqrt{-1})^{n-1}} \int_{u_1,\dotsc,u_{n-1}} V_{n-1,(\nu_1,\dotsc, \nu_{n-1}) }^D (\bu)  
         M_{n-1}(\bv, \bu; \sigma)   \prod_{i=1}^{n-1} du_i \biggr\} \prod_{i=1}^{n-1} dv_i
\\
& = \frac{ \prod_{i=1}^n \GR(\sigma + \nu_i) \prod_{i=1}^n \GR(\sigma-\nu_i) }{ \GR(2\sigma) } \\
& \times 
     \frac{ 1 }{(4\pi \sqrt{-1})^{n-1}}  \int_{v_1,\dotsc, v_{n-1}}  V_{n-1,(\nu_1,\dotsc, \nu_{n-1}) }^D (\bv)
   L_{n,\nu_n}(\bs, \bv)
   \prod_{i=1}^{n-1} dv_i
\\
& =       \frac{ \prod_{i=1}^n \GR(\sigma + \nu_i) \prod_{i=1}^n \GR(\sigma-\nu_i) }{ \GR(2\sigma) }  \cdot 
    V_{n,\nu }^D (\bs),
\end{align*}
as desired.\epf

\medskip 

\noindent 
{\it Proof of Lemma \ref{lem:keylemma_Pieri_SO2n}.}
We will demonstrate the identity (\ref{eqn:keylemma_Pieri_SO2n}) by applying Lemma \ref{lem:int3} to the integral in each of the following variables, in the following order:
$$
 s_1 \to s_2 \to \cdots \to s_{n-3} \to s_{n-2} \to s_{n-1} \to s_{n} \to s_{n-2} \to s_{n-3} \to \cdots \to s_3 \to s_2.
$$
We proceed in seven steps.  In Step $i$, for $1\le i\le 6$, we will obtain an expression of the form  \begin{align} \label{eqn:kernel_K}
\begin{split}
&  \frac{1}{ (4\pi \sqrt{-1})^{n}} 
  \int_{s_1,\dotsc, s_n} L_{n,\nu_n}(\boldsymbol{s}, \boldsymbol{u}) M_n(\boldsymbol{t}, \boldsymbol{s}; \sigma) \prod_{i=1}^n ds_i 
\\
& = 
 \GR(\sigma+\nu_n) \cdot \frac{1}{ (4\pi \sqrt{-1})^{n}} \int_{s_1,\dotsc, s_n} K_i(\bs, \bu, \bt; \sigma) \prod_{i=1}^{n} ds_i,
\end{split}
\end{align}for an appropriate function $K_i(\bs, \bu, \bt; \sigma)$.  Then, in Step 7, we use some changes of variable and an application of Barnes' second lemma to obtain the stated result.

\noindent
{\it Step 1}.
If $n=3$, we can skip this step and proceed directly to Step 2.

If $n>3$, we apply Lemma \ref{lem:int3}  to the integrals in $s_1,s_2,\ldots, s_{n-3}$  successively, with 
\begin{align*}
 (a,b,c,d) & = ( -u_{i-1}+\nu_n,   -u_i, -s_{i-1}-\nu_n,   s_{i+1}-u_{i-1}-u_i), \\
 (\alpha,\beta, \gamma, \delta) & = (t_i,  t_{i+1}+\sigma,   s_{i-1}+\sigma,  -s_{i+1}+t_i+t_{i+1} +2\sigma)
\end{align*}
for the integral in $s_i$ ($1\le i\le n-3$).

Let us consider the integral in $s_1$. Since 
\begin{align*}
&  I_3( \nu_n,  -u_1, -\nu_n,   s_2-u_1;   t_1, t_2+\sigma, \sigma,  -s_2+t_1+t_2+2\sigma)  \\
& = \frac{ \GR(\sigma+\nu_n) \GR(-u_1+\sigma) \GR(t_1-\nu_n) \GR(t_2+\sigma-\nu_n)}{
   \GR(s_2-u_1-\nu_n) \GR(s_2) \GR(t_2-s_2+2\sigma) \GR(t_1-s_2+\sigma)}
\\
& \times I_3( \nu_n,  -u_1,  -s_2+\sigma, -u_1+\sigma+\nu_n;   t_1, t_2+\sigma,  s_2-\nu_n,  t_1+t_2+\sigma-\nu_n),
\end{align*}
application of Lemma 1.4 to this integral  yields\begin{align*}
&  \frac{1}{ (4\pi \sqrt{-1})^{n}} 
  \int_{s_1,\dotsc, s_n} L_{n,\nu_n}(\boldsymbol{s}, \boldsymbol{u}) M_n(\boldsymbol{t}, \boldsymbol{s}; \sigma)  \prod_{i=1}^n ds_i
\\
& =  \frac{1}{ (4\pi \sqrt{-1})^{n}} 
  \int_{s_1,\dotsc, s_n} L_{n,\nu_n} (\boldsymbol{s}, \boldsymbol{u}) M_n(\boldsymbol{t}, \boldsymbol{s}; \sigma)
\\
& \times \frac{ \GR(\sigma+\nu_n) \GR(-u_1+\sigma) \GR(t_1-\nu_n) \GR(t_2+\sigma-\nu_n)}{
   \GR(s_2-u_1-\nu_n) \GR(s_2) \GR(t_2-s_2+2\sigma) \GR(t_1-s_2+\sigma)}
\\
& \times \frac{  \GR(s_1-s_2+\sigma) \GR(s_1+s_2-u_1) \GR(-s_1+s_2-\nu_n) \GR(t_1+t_2-s_1-s_2+2\sigma)}
{ \GR( s_1-\nu_n) \GR(s_1-u_1+\sigma+\nu_n) \GR(-s_1+\sigma) \GR(t_1+t_2-s_1+\sigma-\nu_n)}
 \prod_{i=1}^n ds_i.
\end{align*}
We repeat this process for the remaining $s_i$'s ($2\le i\le n-3$); after treatment of the integral in $s_{n-3}$, we get\begin{align*}&  \frac{1}{ (4\pi \sqrt{-1})^{n}} 
  \int_{s_1,\dotsc, s_n} L_{n,\nu_n}(\boldsymbol{s}, \boldsymbol{u}) M_n(\boldsymbol{t}, \boldsymbol{s}; \sigma)  \prod_{i=1}^n ds_i
\\
& =  
 \frac{1}{ (4\pi \sqrt{-1})^{n}} 
  \int_{s_1,\dotsc, s_n} L_{n,\nu_n}(\boldsymbol{s}, \boldsymbol{u}) M_n(\boldsymbol{t}, \boldsymbol{s}; \sigma)
\\
& \times 
 \prod_{i=1}^{n-3} \frac{ \GR(s_{i-1}-u_{i-1}+\sigma+\nu_n)  \GR(s_{i-1}-u_i+\sigma)}{ \GR(s_{i+1}-u_i-\nu_n) \GR(s_{i+1}-u_{i-1})}
\\
& \times \prod_{i=1}^{n-3} 
   \frac{  \GR(t_i-s_{i-1}-\nu_n) \GR(t_{i+1}-s_{i-1}+\sigma-\nu_n)}
{  \GR(t_{i+1}-s_{i+1} +2\sigma) \GR(t_i-s_{i+1}+\sigma) }
\\
& \times \prod_{i=1}^{n-3} 
   \frac{ \GR(s_i-s_{i+1}+\sigma) \GR(s_i+s_{i+1}-u_{i-1}-u_i) }{\GR(-s_{i-1}+s_i-\nu_n) \GR(s_{i-1}+s_i-u_{i-1}-u_i+\sigma+\nu_n)}
\\
& \times \prod_{i=1}^{n-3}
 \frac{ \GR(-s_i+s_{i+1}-\nu_n) \GR(t_i+t_{i+1}-s_i-s_{i+1}+2\sigma) }{\GR(s_{i-1}-s_i+\sigma)\GR(t_i+t_{i+1}-s_{i-1}-s_{i}+\sigma-\nu_n) }
 \prod_{i=1}^n ds_i.
\end{align*}
Then the integral kernel $ K_1 (\bs, \bu, \bt; \sigma) $ in (\ref{eqn:kernel_K}) is  
\begin{align*}& K_1 (\bs, \bu, \bt; \sigma) \\
&=   \prod_{i=1}^{n-3} \GR(s_i-u_i)  
   \prod_{i=1}^{n-3}  \GR(s_{i-1}-u_i+\sigma) 
   \prod_{i=1}^{n-2} \GR(s_i-u_{i-1}+\nu_{n})  
      \prod_{i=1}^{n-4} \GR(s_{i}-u_{i}+\sigma+\nu_n) 
\\
& \times \prod_{i=1}^{n-3} \frac{1}{ \GR(s_{i-1}+s_{i}-u_{i-1}-u_{i}+\sigma+\nu_n)} \\
&\times   
 \GR(s_{n-3}-s_{n-2}+\sigma) \GR(s_{n-2}-s_{n-3}-\nu_n)    \GR(s_{n-2}-u_{n-2}-u_{n-1})  
\\
& \times \frac{ \GR(s_{n-1}+s_{n}-u_{n-3})
    \GR(s_{n-1}-u_{n-2} + \frac{\nu_n}{2} )  \GR(s_{n-1}-u_{n-1} - \tfrac{\nu_n}{2})  }{ 
   \GR(s_{n-2}+s_{n-1}+s_{n} - u_{n-3}-u_{n-2} -u_{n-1})} 
\\
& \times 
  \GR(s_{n}-u_{n-2} - \tfrac{\nu_n}{2} )  \GR(s_{n}-u_{n-1} + \tfrac{\nu_n}{2})  
\\
& \times \prod_{i=1}^{n} \GR(t_i-s_i) 
   \prod_{i=1}^{n-3} \frac{\GR( t_i-s_{i-1}-\nu_n) \GR(t_{i+1}-s_{i-1}+\sigma-\nu_n) }{ \GR(t_{i}+t_{i+1}-s_{i-1}-s_{i}+\sigma-\nu_n)} \\
& \times 
   \prod_{i=1}^{n-2} \GR(t_{i}-s_{i-1} +\sigma) \cdot \GR(t_{n-1}+t_n-s_{n-2}+\sigma)  \GR(t_n-s_{n-1}+\sigma) \\
& \times \frac{ \GR(t_{n-2} -s_{n-1}-s_n + \sigma)   
    \GR(t_{n-1}-s_{n}+\sigma)  }{ 
 \GR(t_{n-2}+t_{n-1}+t_{n} -s_{n-2}-s_{n-1}-s_n+2\sigma) }.
\end{align*}

\noindent
{\it Step 2}. We apply Lemma \ref{lem:int3} to the integral in $s_{n-2}$, with  
\begin{align*}
 (a,b,c,d) & = ( -u_{n-3}+\nu_n,   -u_{n-2}-u_{n-1}, -s_{n-3}-\nu_n,   s_{n-1}+s_n-u_{n-3}-u_{n-2}-u_{n-1}), \\
 (\alpha,\beta, \gamma, \delta) & = (t_{n-1}+t_{n}+\sigma,  t_{n-2},    s_{n-3}+\sigma,  t_{n-2}+t_{n-1}+t_n-s_{n-1}-s_n+2\sigma).
\end{align*}
Then the integral kernel $ K_2(\bs, \bu, \bt; \sigma) $ is 
\begin{align*}
& K_2(\bs, \bu, \bt; \sigma) =  \prod_{i=1}^{n-3} \GR(s_i-u_i)  \cdot \GR(s_{n-2}-u_{n-2}-u_{n-1}) \\
& \times   \prod_{i=1}^{n-3}  \GR(s_{i-1}-u_i+\sigma) \cdot \GR(s_{n-3} -u_{n-2}-u_{n-1}+\sigma)\\
& \times 
   \prod_{i=1}^{n-2} \GR(s_i-u_{i-1}+\nu_{n})  
      \prod_{i=1}^{n-3} \frac{\GR(s_{i}-u_{i}+\sigma+\nu_n) }{ \GR(s_{i-1}+s_{i}-u_{i-1}-u_{i}+\sigma+\nu_n)}
\\
&\times  \GR(-s_{n-2}+s_{n-1}+s_n-\nu_n)    \GR(s_{n-2}-s_{n-1}-s_n+\sigma) 
\\
& \times  \frac{  \GR(s_{n-1}-u_{n-2} + \frac{\nu_n}{2} )  \GR(s_{n-1}-u_{n-1} - \tfrac{\nu_n}{2})
    \GR(s_{n}-u_{n-2} - \tfrac{\nu_n}{2} )  \GR(s_{n}-u_{n-1} + \tfrac{\nu_n}{2})    }{ 
  \GR(s_{n-2}+s_{n-3}-u_{n-3}-u_{n-2}-u_{n-1}+\sigma+\nu_n) \GR(s_{n-1}+s_n-u_{n-2}-u_{n-1}-\nu_n)  }
\\
& \times \prod_{i=1}^{n} \GR(t_i-s_i) 
  \prod_{i=1}^{n-2}\GR( t_i-s_{i-1}-\nu_n) 
  \prod_{i=1}^{n-3} \frac{  \GR(t_{i+1}-s_{i-1}+\sigma-\nu_n)}{ \GR(t_{i}+t_{i+1}-s_{i-1}-s_{i}+\sigma-\nu_n)} \\
& \times 
   \prod_{i=1}^{n-2} \GR(t_{i}-s_{i-1} +\sigma) \cdot \GR(t_{n-1}+t_n-s_{n-2}+\sigma)  \GR(t_n-s_{n-1}+\sigma) \\
&  \times 
  \frac{ \GR(t_{n-1}-s_{n}+\sigma) \GR(t_{n-1}+t_{n}-s_{n-3}+\sigma-\nu_n) }{  
  \GR(t_{n-1}+t_n-s_{n-1}-s_n+2\sigma)\GR(t_{n-2}+t_{n-1}+t_n-s_{n-3}-s_{n-2} +\sigma-\nu_n) }.
\end{align*}

\medskip 

\noindent
{\it Step 3}. We apply Lemma \ref{lem:int3} to the integral in $s_{n-1}$, with  
\begin{align*}
 (a,b,c,d) & = ( -u_{n-1}-\tfrac{\nu_n}{2},   -s_{n-2}+s_n -\nu_n,  -u_{n-2}+\tfrac{\nu_n}{2},  s_n-u_{n-2}-u_{n-1} -\nu_n), \\
 (\alpha,\beta, \gamma, \delta) & = (t_{n}+\sigma,  s_{n-2}-s_n+\sigma,  t_{n-1},   t_{n-1}+t_n-s_n+2\sigma).
\end{align*}
Then the integral kernel $ K_3(\bs, \bu, \bt; \sigma) $ is  \begin{align*}
&K_3(\bs, \bu, \bt; \sigma)= \prod_{i=1}^{n-3} \GR(s_i-u_i)
   \prod_{i=1}^{n-3}  \GR(s_{i-1}-u_i+\sigma) 
\\
& \times    
   \prod_{i=1}^{n-2} \GR(s_i-u_{i-1}+\nu_{n})  
    \prod_{i=1}^{n-3} \frac{\GR(s_{i}-u_{i}+\sigma+\nu_n)}{      \GR(s_{i-1}+s_{i}-u_{i-1}-u_{i}+\sigma+\nu_n)} 
\\
& \times \frac{ \GR( s_{n-3}-u_{n-2}-u_{n-1}+\sigma) \GR(s_{n-2}- s_{n-1}-u_{n-1}) }{ 
    \GR(s_{n-3}+s_{n-2}  -u_{n-3}  -u_{n-2}-u_{n-1}+\sigma+\nu_n )  }
\\
& \times \GR(s_{n-2}-s_n-u_{n-2}+\sigma+\tfrac{\nu_n}{2} )  \GR(s_{n-1}-s_n+\sigma-\tfrac{\nu_n}{2}) 
\\
& \times   \GR(s_{n-1}  -u_{n-2} ) \GR(-s_{n-1}+s_n -\tfrac{\nu_n}{2})\GR(s_{n}-u_{n-1} + \tfrac{\nu_n}{2})
\\
& \times   \prod_{i=1}^{n-2} \GR(t_i-s_i)  \cdot \GR(t_{n-1}-s_{n-1}+\tfrac{\nu_n}{2})  \GR(t_n-s_n) \prod_{i=1}^{n-2} \GR( t_i-s_{i-1}-\nu_n) \\
& \times 
 \prod_{i=1}^{n-3} \frac{ \GR(t_{i+1}-s_{i-1}+\sigma-\nu_n) }{ \GR(t_{i}+t_{i+1}-s_{i-1}-s_{i}+\sigma-\nu_n)}
   \prod_{i=1}^{n-2} \GR(t_{i}-s_{i-1} +\sigma) 
\\
&  \times \frac{ \GR( t_{n-1}-s_{n-2}+s_n -\nu_n) \GR(t_{n-1}+t_{n}-s_{n-3}+\sigma-\nu_n)  }{ 
  \GR(t_{n-2}+t_{n-1}+t_{n}-s_{n-3}-s_{n-2}+\sigma -\nu_n ) } 
\\
& \times \frac{ \GR(t_{n}-s_{n-2}+s_{n-1}-\tfrac{\nu_n}{2}+\sigma)
  \GR(  t_{n-1}-u_{n-1}-\tfrac{\nu_n}{2}) \GR(t_{n}-u_{n-2}+\sigma+\tfrac{\nu_n}{2}) }
  { \GR(t_{n-1}-s_{n-1} +s_n-u_{n-1}-\nu_n ) \GR(t_{n}+s_{n-1}-s_n-u_{n-2}+2\sigma)}.
\end{align*}
Here we  substituted $ s_{n-1} \to -s_{n-1}+s_{n-2} + \tfrac{\nu_n}{2} $.

\medskip

\noindent
{\it Step 4}.
We apply Lemma \ref{lem:int3} to the integral in $s_n$, with 
\begin{align*}
 (a,b,c,d) & = (t_{n-1}-s_{n-2}-\nu_n,  -s_{n-1}-\tfrac{\nu_n}{2},  -u_{n-1}+ \tfrac{\nu_n}{2},   t_{n-1}-s_{n-1}-u_{n-1}-\nu_n), \\
 (\alpha,\beta, \gamma, \delta) & = (s_{n-2}-u_{n-2}+\sigma+\tfrac{\nu_n}{2},  s_{n-1}+\sigma-\tfrac{\nu_n}{2},  t_{n},  
    t_{n}+s_{n-1}-u_{n-2}+2\sigma).
\end{align*}
Then the integral kernel $ K_4(\bs, \bu, \bt; \sigma) $ is  \begin{align*}
& K_4(\bs, \bu, \bt; \sigma) = \prod_{i=1}^{n-3} \GR(s_i-u_i)    
   \prod_{i=1}^{n-3}  \GR(s_{i-1}-u_i+\sigma) \\
& \times 
     \prod_{i=1}^{n-2} \GR(s_i-u_{i-1}+\nu_{n})
    \prod_{i=1}^{n-3} \frac{ \GR(s_{i}-u_{i}+\sigma+\nu_n) }{ \GR(s_{i-1}+s_{i}-u_{i-1}-u_{i}+\sigma+\nu_n) }\\
& \times 
   \frac{ \GR( s_{n-3}-u_{n-2}-u_{n-1}+\sigma)   \GR(s_{n-2}-u_{n-2}-u_{n-1}+\sigma+ \nu_n) }{
    \GR(s_{n-3}+s_{n-2}  -u_{n-3}  -u_{n-2}-u_{n-1}+\sigma+\nu_n ) \GR(s_{n-1}+s_n-u_{n-2}-u_{n-1}+2\sigma) }
\\
& \times    
 \GR(s_{n-2}-s_{n-1}-s_n) \GR(-s_{n-2}+ s_{n-1}+s_n+\sigma-\nu_n) \\
& \times 
 \GR(s_{n-1}  -u_{n-2} ) 
 \GR(s_{n-1}-u_{n-1}+\sigma) 
  \GR(s_n-u_{n-2}+\sigma) 
  \GR(s_n -u_{n-1} ) 
 \\
& \times \prod_{i=1}^{n-2} \GR(t_i-s_i) \cdot   \GR(t_{n-1}-s_{n-1}+\tfrac{\nu_n}{2})  \GR(t_{n}-s_{n}+\tfrac{\nu_n}{2}) 
   \prod_{i=1}^{n-2}  \GR( t_i-s_{i-1}-\nu_n)
\\
& \times   \prod_{i=1}^{n-3} \frac{ \GR(t_{i+1}-s_{i-1}+\sigma-\nu_n) }{ \GR(t_{i}+t_{i+1}-s_{i-1}-s_{i}+\sigma-\nu_n)}
   \prod_{i=1}^{n-2} \GR(t_{i}-s_{i-1} +\sigma)
\\
& \times   \frac{  \GR(t_n-s_{n-1}-\tfrac{\nu_n}{2}) \GR(t_{n-1}+t_{n}-s_{n-3}+\sigma-\nu_n) \GR(t_{n-1}-s_n-\tfrac{\nu_n}{2}) }{ 
    \GR(t_{n-1}+t_n -s_{n-1}-s_n-\nu_n) } 
 \\
& \times 
  \frac{  \GR(t_{n-1}+t_n-s_{n-2}-\nu_n)}{  
   \GR(t_{n-2}+t_{n-1}+t_{n}-s_{n-3}-s_{n-2}+\sigma -\nu_n )}.
\end{align*}
Here we substituted $ s_n \to -s_n+s_{n-2}+\tfrac{\nu_n}{2} $.

Now if $n=3$, skip Steps 5 and 6, and proceed directly to Step 7.  Otherwise, the steps are as follows.

\noindent
{\it Step 5}. We apply Lemma \ref{lem:int3}  to the integral in $s_{n-2}$,  with 
\begin{align*}
 (a,b,c,d) & = (-u_{n-3}+\nu_n,  -u_{n-2}-u_{n-1} +\sigma+\nu_n,  -s_{n-1}-s_n,  \\
& \qquad  s_{n-3}-u_{n-3}-u_{n-2}-u_{n-1} +\sigma+\nu_n), \\
 (\alpha,\beta, \gamma, \delta) & = (t_{n-1}+t_n-\nu_n,  t_{n-2},    s_{n-1}+s_n+\sigma-\nu_n,  t_{n-2}+t_{n-1}+t_n-s_{n-3}+\sigma-\nu_n). 
\end{align*}
Then the integral kernel $ K_5(\bs, \bu, \bt; \sigma) $ is  \begin{align*}
& K_5(\bs, \bu, \bt; \sigma)=\prod_{i=1}^{n-4} \GR(s_i-u_i)  \prod_{i=1}^{n-3}  \GR(s_{i-1}-u_i+\sigma)      
   \prod_{i=1}^{n-2} \GR(s_i-u_{i-1}+\nu_{n})  
\\
& \times \prod_{i=1}^{n-3} \frac{ \GR(s_{i}-u_{i}+\sigma+\nu_n) }{\GR(s_{i-1}+s_{i}-u_{i-1}-u_{i}+\sigma+\nu_n)}
   \cdot \GR(s_{n-2}-u_{n-2}-u_{n-1} +\sigma+\nu_n)
\\
&  \times \frac{ \GR(s_{n-2}-s_{n-3}+\sigma ) \GR(-s_{n-2}+s_{n-3}-\nu_n)  \GR(s_{n-1}+s_n-u_{n-3}+\sigma) 
    }{\GR(s_{n-2} + s_{n-1}+s_n -u_{n-3}  -u_{n-2}-u_{n-1} +2\sigma+\nu_n ) }
\\
& \times    
 \GR(s_{n-1}  -u_{n-2} ) 
 \GR(s_{n-1}-u_{n-1}+\sigma) 
   \GR(s_n-u_{n-2}+\sigma) 
  \GR(s_n -u_{n-1} ) 
\\
& \times  \prod_{i=1}^{n-2} \GR(t_i-s_i) \prod_{i=1}^{n-2}  \GR( t_i-s_{i-1}-\nu_n)  
   \prod_{i=1}^{n-3} \frac{ \GR(t_{i}-s_{i-1} +\sigma) \GR(t_{i+1}-s_{i-1}+\sigma-\nu_n) }{  \GR(t_{i}+t_{i+1}-s_{i-1}-s_{i}+\sigma-\nu_n)}
\\
& \times \frac{ \GR(t_{n-2}-s_{n-1}-s_n) \GR(t_{n-1}+t_n-s_{n-2}-\nu_n)   }{
   \GR(  t_{n-2}+t_{n-1}+t_n-s_{n-2}-s_{n-1}-s_n-\nu_n) }
\\
& \times  
    \GR(t_{n-1}-s_{n-1}+\tfrac{\nu_n}{2})  \GR(t_{n-1}-s_n-\tfrac{\nu_n}{2}) 
    \GR(t_n-s_{n-1}-\tfrac{\nu_n}{2}) \GR(t_{n}-s_{n}+\tfrac{\nu_n}{2}).
\end{align*}

Now if $ n =4 $,  skip Step 6 and proceed directly to Step 7.
Otherwise, apply:

\noindent
{\it Step 6}. 
We apply Lemma \ref{lem:int3} to the integrals in $ s_{n-3}, s_{n-4}, \ldots, {s_2} $ successively, with 
\begin{align*}
 (a,b,c,d) & = ( -u_{i-1}+\nu_n,   -u_i+\sigma+\nu_n, -s_{i+1}-\nu_n,   s_{i-1}-u_{i-1}-u_i+\sigma+\nu_n), \\
 (\alpha,\beta, \gamma, \delta) & = (t_i,  t_{i+1}-\nu_n,   s_{i+1}+\sigma,  t_i+t_{i+1}-s_{i-1} +\sigma-\nu_n)
\end{align*}for the integral in $s_i$ ($2\le i\le n-3$).
Then the integral kernel $ K_6(\bs, \bu, \bt; \sigma) $ is 
\begin{align*}
 &K_6(\bs, \bu, \bt; \sigma)=  \GR(s_{2}-s_{1} +\sigma)  \GR(-s_{2}+s_{1}-\nu_n) \GR(-u_1+\sigma) \\
& \times  \prod_{i=1}^{n-2} \GR(s_i-u_{i-1}+\nu_{n}) 
     \prod_{i=1}^{n-4}  \GR(s_{i+2} -u_i  + \sigma +\nu_n) \\
& \times     \prod_{i=2}^{n-3} \GR(s_{i+1}-u_{i} +2\sigma+\nu_n)
    \prod_{i=2}^{n-3} \frac{\GR(s_{i}-u_{i}+\sigma+\nu_n) }{ \GR(s_{i}+s_{i+1} -u_{i-1}-u_{i}  +2\sigma + 2\nu_n ) }
     \\
&  \times \frac{ \GR(s_{n-2}-u_{n-2}-u_{n-1} +\sigma+\nu_n) \GR(s_{n-1}+s_n-u_{n-3}+\sigma)
    }{\GR(s_{n-2} + s_{n-1}+s_n -u_{n-3}  -u_{n-2}-u_{n-1} +2\sigma+\nu_n ) }\\
& \times    
 \GR(s_{n-1}  -u_{n-2} )  \GR(s_{n-1}-u_{n-1}+\sigma ) 
   \GR(s_n-u_{n-2}+\sigma) \GR(s_n -u_{n-1} ) 
\\
& \times  \prod_{i=1}^{n-2} \GR(t_i-s_i)  \prod_{i=1}^{n-2}  \GR( t_i-s_{i-1}-\nu_n)  
       \prod_{i=3}^{n-2}  \GR(t_{i}-s_{i}-2\nu_n) 
\\
& \times   \frac{ \GR(t_{1}+\sigma) \GR(t_{2}+\sigma-\nu_n)}{ \GR(t_{1}+t_{2}-s_{1}+\sigma-\nu_n)}
  \prod_{i=2}^{n-3} \frac{\GR(t_{i}-s_{i+1}-\nu_n)}{  \GR(t_{i} +t_{i+1} -s_{i} -s_{i+1}-2\nu_n)  }
\\
& \times \frac{ \GR(t_{n-2}-s_{n-1}-s_n) \GR(t_{n-1}+t_n-s_{n-2}-\nu_n)   }{
   \GR(  t_{n-2}+t_{n-1}+t_n-s_{n-2}-s_{n-1}-s_n-\nu_n) } 
\\
& \times \GR(t_{n-1}-s_{n-1}+\tfrac{\nu_n}{2}) \GR(t_{n-1}-s_n-\tfrac{\nu_n}{2}) \GR(t_n-s_{n-1}-\tfrac{\nu_n}{2}) 
    \GR(t_{n}-s_{n}+\tfrac{\nu_n}{2}).
\end{align*}

\medskip

\noindent 
{\it Step 7}. 
After replacing $s_1$ by $-s_1$, we use Barnes' second lemma:
\begin{align*}
& I_1(t_2-\nu_n, s_2+\sigma, t_1;   -s_2-\nu_n,  \nu_n ;  t_1+t_2+\sigma-\nu_n)
\\
& = \frac{ \GR(t_2) \GR(t_2-s_2-2\nu_n) \GR(t_1+\nu_n) \GR(s_2+\sigma+\nu_n) \GR(t_1-s_2-\nu_n) \GR(\sigma-\nu_n)}
{ \GR(t_1+\sigma) \GR(t_2+\sigma-\nu_n) \GR( t_1+t_2-s_2-\nu_n) }
\end{align*}
if $ n\ge4 $, and 
\begin{align*}
& I_1(t_2+t_3-\nu_n,  s_2+s_3+\sigma-\nu_n, t_1;     \nu_n, -s_2-s_3;  t_1+t_2+t_3+\sigma-\nu_n)
\\
& = \frac{ \GR(t_2+t_3) \GR(t_2+t_3-s_2-s_3-\nu_n) \GR(s_2+s_3+\sigma) \GR(\sigma-\nu_n) }
{ \GR(t_1+\sigma) \GR(t_1+t_2+t_3-s_2-s_3) \GR( t_2+t_3+\sigma-\nu_n) }
\\
& \times \GR(t_1+\nu_n) \GR(t_1-s_2-s_3)
\end{align*}
if $n=3 $.
Substituting  
$$ 
  s_i \to v_{i-1} -\nu_n \  (2 \le i \le n-2), \quad 
  s_{n-1} \to v_{n-2},  \quad 
  s_n \to v_{n-1},
$$
we then complete the proof of Lemma \ref{lem:keylemma_Pieri_SO2n}, and  consequently of Proposition \ref{prop:Pieri_SO2n}.
\epf

\section{Archimedean zeta integrals on $ {\rm SO}_{2n} \times {\rm GL}_n $} \label{sec:Arch_D}

\subsection{The archimedean zeta integral $I_{\nu,\mu}^n(s) $} 

We explain how the integral considered below arises from Method B of
Gelbart and Piatetski-Shapiro \cite[Part B]{GePS}.
We make the embedding, the representative of the open orbit, and the
normalizations explicit, retaining the Whittaker characters fixed in
Section \ref{sec:Wh}.

Put $G={\rm SO}_{2n}(\R)$, with the realization used in Section \ref{sec:Wh},
and let
\[
 H = \{
 h\in  {\rm SL}_{2n+1}(\R) \mid
 {}^t h\mathcal J h=\mathcal J \},  \qquad  
 \mathcal J=
 \begin{pmatrix}
  0&0&J_n\\
  0&-1&0\\
  J_n&0&0
 \end{pmatrix}.
\]
Thus $H$ is a split group of type $B_n$.
We use the ordered basis
$ \{  e_1,\ldots,e_n,e_0,f_n,\ldots,f_1 \}, $
and embed $G$ in $H$ by letting it act on
$\langle e_1,\ldots,e_n,f_n,\ldots,f_1\rangle$
as in Section \ref{sec:Wh}, and letting it fix $e_0$.
Denote this embedding by $\iota$.
In particular,
\[
 \iota(\beta[a_1,\ldots,a_n])
 =
 \operatorname{diag}
 (a_1,\ldots,a_n,1,a_n^{-1},\ldots,a_1^{-1}).
\]
For $K_H=H\cap {\rm O}(2n+1)$ and $K_G=G\cap {\rm O}(2n)$, we have
$
 \iota(K_G)\subset K_H.
$

Let $P=M_PN_P$ be the Siegel parabolic subgroup of $H$ stabilizing
$\langle e_1,\ldots,e_n\rangle$.
Its Levi subgroup $ M_P $ is identified with ${\rm GL}_n(\R)$ by
\[
 m(b)=
 \operatorname{diag}
 \bigl(b,1,J_n\,{}^t b^{-1}J_n\bigr),
 \qquad b\in {\rm GL}_n(\R).
\]
The modulus character $ \delta_P $ of  $P $ is given by 
\[
 \delta_P(m(b))=|\det b\,|^n.
\]
Let 
\[
  \eta  =  \operatorname{diag} (1_{n-1}, \eta_0, 1_{n-1}), \qquad 
 \eta_0=
 \begin{pmatrix}
  \frac14&\frac{1}{2\sqrt2}&\frac14\\[2pt]
  -\frac{1}{\sqrt2}&0&\frac{1}{\sqrt2}\\[2pt]
  1&-\sqrt2&1
 \end{pmatrix}.
\]
A direct calculation shows that $ \eta \in H $ and 
\[
 \eta^{-1}\langle e_1,\ldots,e_n\rangle
 =
 \langle
 e_1,\ldots,e_{n-1},e_n+\sqrt2e_0+f_n
 \rangle.
\]
This maximal isotropic subspace is not contained in $e_0^\perp$.
Thus $P\eta\iota(G)$ is the open orbit in $P\backslash H$,
namely the main orbit of \cite[Part B, Proposition 3.1.2]{GePS}.

Write $U_D$ and $U_A$ for the upper triangular maximal unipotent
subgroups of $G$ and ${\rm GL}_n(\R)$, respectively, and
$\psi_D$ and $\psi_A$ for the characters defined in Section \ref{sec:Wh}.
For every $u=(u_{ij})\in U_D$, we have
\[
 \eta\iota(u)\eta^{-1}=m(z(u))v(u),
 \qquad
 z(u)\in U_A,\ v(u)\in N_P.
\]
Indeed, $U_D$ preserves the full flag ending in the displayed
maximal isotropic subspace.
Its action on the ordered basis
$ \{ e_1,\ldots,e_{n-1},e_n+\sqrt2e_0+f_n \} $
gives
\begin{align*}
 z(u)_{i,i+1} & = 
  \begin{cases} 
  u_{i,i+1} & \text{if $ 1\leq i\leq n-2$},
\\
  u_{n-1,n}+u_{n-1,n+1} & \text{if $ i=n-1$}. \end{cases}
\end{align*}
In particular,
\begin{align}
 \label{eq:SOGL-character-compatibility}
 \psi_A(z(u))=\psi_D(u).
\end{align}

Let $\pi_\mu$ denote the spherical principal series
representation of ${\rm GL}_n(\R)$ with parameter $\mu$.
We use its $\psi_A^{-1}$-Whittaker model for the inducing data.
More precisely, set
\[
 \epsilon_n=
 \operatorname{diag}
 \bigl((-1)^{n-1},(-1)^{n-2},\ldots,-1,1\bigr),
 \qquad
 W^{A,-}_{n,\mu}(b)
 =
 W^A_{n,\mu}(\epsilon_n b), \quad b \in {\rm GL}_n(\R).
\]
Then $W^{A,-}_{n,\mu}$ is a right ${\rm O}(n)$-invariant $\psi_A^{-1}$-Whittaker function
satisfying
\[
 W^{A,-}_{n,\mu}(\alpha[a_1,\ldots,a_n])
 = W^A_{n,\mu}(\alpha[a_1,\ldots,a_n]).
\]

Let $F_{\mu,s}$ be the scalar-valued realization, in this
Whittaker model, of the $K_H$-fixed section of the normalized
induced representation
$ \operatorname{Ind}_P^H
 (\pi_\mu\otimes|\det|^{s-\frac12}). $
We normalize it by
\begin{align}
 \label{eq:SOGL-spherical-section}
 F_{\mu,s}(m(b)vk)
 &=
 |\det b \,|^{s+\frac{n-1}{2}}
 W^{A,-}_{n,\mu}(b),
 \qquad  b\in {\rm GL}_n(\R),\ v\in N_P, \ k\in K_H.
\end{align}
By \eqref{eq:SOGL-character-compatibility}, we have 
\[
 F_{\mu,s}(\eta\iota(ug))
 =
 \psi_D(u)^{-1}  F_{\mu,s}(\eta\iota(g)),
 \qquad
 u\in U_D,\ g\in G.
\]

The real local Rankin-Selberg integral supplied by Method B of \cite[Part B, Theorem 3.B and Section 5.1]{GePS},
with these choices of data, is therefore
\begin{equation}
 \label{eq:SOGL-local-zeta-integral}
 Z(s,W^D_{n,\nu},F_{\mu,s})
 =
 \int_{U_D\backslash G}
 W^D_{n,\nu}(g)
 F_{\mu,s}(\eta\iota(g))\,dg.
\end{equation}
The integrand is left $U_D$-invariant and right $K_G$-invariant.
We normalize the quotient measure so that its Iwasawa
expression is
\[
 dg=
 \prod_{i=1}^n a_i^{-2(n-i)} \frac{da_i}{a_i}\,dk,
 \qquad
 a=\beta[a_1,\ldots,a_n],
 \qquad
 \int_{K_G}dk=1.
\]

Let us compute the $PK_H$-decomposition of
$\eta\iota(a)$.
This reduces to a three-dimensional calculation.
For $t>0$, put
\[
 r_t=\frac{1-t^2}{\sqrt2(1+t^2)},
 \qquad
 x(t)=
 \begin{pmatrix}
  1&r_t& \frac12 r_t^2 \\
  0&1&r_t\\
  0&0&1
 \end{pmatrix},
\]
and
\[
 k(t)=
 \frac{1}{1+t^2}
 \begin{pmatrix}
  1&\sqrt2t&t^2\\
  -\sqrt2t&1-t^2&\sqrt2t\\
  t^2&-\sqrt2t&1
 \end{pmatrix} \in {\rm SO}(3).
\]
Note that $k(t)$ preserves the restriction of $\mathcal J$ to $\langle e_n,e_0,f_n\rangle$.
Direct computation gives
\begin{align}
 \label{eq:SOGL-rank-one-decomposition}
 \eta_0
 \begin{pmatrix}
  t&0&0\\
  0&1&0\\
  0&0&t^{-1}
 \end{pmatrix}
 =
 x(t)
 \begin{pmatrix}
  (t+t^{-1})^{-1}&0&0\\
  0&1&0\\
  0&0&t+t^{-1}
 \end{pmatrix}
 k(t).
\end{align}
It follows that, for $ a = \beta[a_1,\dotsc,a_n] $, we have
\[
 \eta\iota(a)=v(a)m(b(a))k(a),
 \qquad
 v(a)\in N_P,\ k(a)\in K_H,
\]
where
\[
 b(a)=
 \alpha [
 a_1,\ldots,a_{n-1},(a_n+a_n^{-1})^{-1} ].
\]
Hence \eqref{eq:SOGL-spherical-section} yields
\[
 F_{\mu,s}(\eta\iota(a))
 =
 \Bigl(
 \frac{a_1\cdots a_{n-1}}{a_n+a_n^{-1}}
 \Bigr)^{s+\frac{n-1}{2}}
 W^A_{n,\mu}(b(a)).
\]
Substitution in \eqref{eq:SOGL-local-zeta-integral} gives
\[
 \begin{aligned}
 Z(s,W^D_{n,\nu},F_{\mu,s})
 ={}&
 \int_{(\R_+)^n}  W^D_{n,\nu}(a) W^A_{n,\mu}(b(a)) 
 \cdot  \Bigl(
 \frac{a_1\cdots a_{n-1}}{a_n+a_n^{-1}}
 \Bigr)^{\!s+\frac{n-1}{2}}
 \prod_{i=1}^n a_i^{-2(n-i)}\, \frac{da_i}{a_i}.
 \end{aligned}
\]
Consequently, we get
\begin{align*}
 Z(s,W^D_{n,\nu},F_{\mu,s})
 & = 
 \int_{(\R_{+})^n}
 \widetilde W^D_{n,\nu}
 \bigl(\beta[a_1,\ldots,a_n]\bigr)
 \\
 &\times
 \widetilde W^A_{n,\mu}
 \left(
 \alpha [a_1,\ldots,a_{n-1},(a_n+a_n^{-1})^{-1} ]
 \right) 
 \cdot   \Bigl( \frac{a_1\cdots a_{n-1}}{a_n+a_n^{-1}} \Bigr)^{\!s}
\, \prod_{i=1}^n\frac{da_i}{a_i}.
\end{align*}
We denote this integral by $I^n_{\nu,\mu}(s)$.
Thus $I^n_{\nu,\mu}(s)$ is precisely the real local
Rankin-Selberg integral for the embedding, open-orbit
representative, spherical data, and Haar measure specified above.



\begin{prop} \label{prop:I}
We have 
\begin{align*}
   I_{\nu, \mu}^n(s) 
& = \frac{2^{-1} }{(2\pi \sqrt{-1})^n}  \int_{s_1,\dotsc,s_n} 
      V_{n,\nu}^D(s_1, \dotsc, s_n) \\
& \times V_{n, \mu}^A(s-s_1, 2s-s_2,\dotsc, (n-2)s-s_{n-2}, (n-1)s-s_{n-1}-s_n) 
\\
& \times  
 \frac{ \GR(ns-2s_{n-1}+|\mu|) \GR(ns -2s_n+|\mu|)}{ \GR(2ns-2s_{n-1}-2s_n+2|\mu|)}   \,\prod_{i=1}^n ds_i.
\end{align*}
\end{prop}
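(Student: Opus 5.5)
The plan is to substitute the Mellin--Barnes representations of $\widetilde W^D_{n,\nu}$ and $\widetilde W^A_{n,\mu}$ from Section \ref{sec:Wh} into the definition of $I^n_{\nu,\mu}(s)$. We then carry out the integrations over $a_1,\dotsc,a_{n-1}$ by Mellin inversion, and evaluate the remaining $a_n$-integral as a beta integral.

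\emph{Substitution.} Write $c=(a_n+a_n^{-1})^{-1}$, and use the variables $q_1,\dotsc,q_{n-1}$ in the representation of $\widetilde W^A_{n,\mu}(\alpha[a_1,\dotsc,a_{n-1},c])$. The integrand then becomes
\[
V^D_{n,\nu}(\bs)\, V^A_{n,\mu}(\bq)\, \prod_{i=1}^{n-1}\Bigl(\frac{a_i}{a_{i+1}}\Bigr)^{-s_i}(a_{n-1}a_n)^{-s_n}\cdot c^{|\mu|}\prod_{i=1}^{n-2}\Bigl(\frac{a_i}{a_{i+1}}\Bigr)^{-q_i}\Bigl(\frac{a_{n-1}}{c}\Bigr)^{-q_{n-1}}(a_1\cdots a_{n-1})^{s}c^{s}.
\]
Collecting exponents gives the following:
\begin{itemize}
\item $a_1$ appears to the power $s-s_1-q_1$;
\item for $2\le k\le n-2$, $a_k$ appears to the power $s+s_{k-1}-s_k+q_{k-1}-q_k$;
\item $a_{n-1}$ appears to the power $s+s_{n-2}-s_{n-1}-s_n+q_{n-2}-q_{n-1}$.
\end{itemize}

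\emph{Mellin inversion in $a_1,\dotsc,a_{n-1}$.} For fixed $\bs$, we view the $\bq$-integral as an inverse Mellin transform. Integrating successively in $a_1,\dotsc,a_{n-1}$ with Mellin inversion, each integration absorbs one factor $(2\pi\sqrt{-1})^{-1}$ together with one $q$-integration, and sets that exponent to zero. Solving the resulting equations recursively gives
\[
q_k=ks-s_k \quad (1\le k\le n-2), \qquad q_{n-1}=(n-1)s-s_{n-1}-s_n .
\]
These are exactly the arguments of $V^A_{n,\mu}$ in the statement. What survives is the factor $(2\pi\sqrt{-1})^{-n}$ from $\widetilde W^D$, the $\bs$-integral, and the $a_n$-integral
\[
\int_0^\infty a_n^{\,s_{n-1}-s_n}\,(a_n+a_n^{-1})^{-w}\,\frac{da_n}{a_n},\qquad w=ns-s_{n-1}-s_n+|\mu|,
\]
where the exponent $w$ of $c$ equals $|\mu|+q_{n-1}+s$. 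Substituting $a_n^2=x$ turns this into $\tfrac12 B(\tfrac{w+s_{n-1}-s_n}{2},\tfrac{w-s_{n-1}+s_n}{2})$. The two arguments are $\tfrac12(ns-2s_n+|\mu|)$ and $\tfrac12(ns-2s_{n-1}+|\mu|)$, and they sum to $w/2$. Rewriting $\Gamma(x)=\pi^{x}\GR(2x)$, the powers of $\pi$ cancel between numerator and denominator. This yields $\tfrac12\,\GR(ns-2s_{n-1}+|\mu|)\GR(ns-2s_n+|\mu|)/\GR(2w)$, which is exactly the displayed quotient with the prefactor $2^{-1}$.

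\emph{Main obstacle: justifying the interchanges.} The expected difficulty is not algebraic but analytic: justifying the interchange of the $a$-integrations with the Barnes integrals, and the use of Mellin inversion, which is in effect a Fourier inversion on the vertical lines. I would handle this as in the conventions of Section \ref{sec:Wh}. First choose $\Re s$ large and contours with real parts such that all Mellin--Barnes integrals converge absolutely, using Stirling decay of the $\GR$-products. Then perform each $a_i$-integration as the Mellin transform of a function whose Mellin--Barnes representation is known, so that Mellin inversion legitimately identifies the $q$-integral with its kernel at the stated point. Finally, extend the identity by meromorphic continuation.
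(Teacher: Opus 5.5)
Your proof is correct, but it runs the Mellin inversion on the other Whittaker function than the paper does.

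\textbf{The paper's route.} The paper inserts only the Mellin--Barnes representation of $\widetilde W^A_{n,\mu}$. It then expands $(a_n+a_n^{-1})^{-s-s_{n-1}-|\mu|}$ itself as a Barnes integral in an auxiliary variable $t$, which is the inverse of the beta integral you use. The whole $a$-integral is then a Mellin transform of $\widetilde W^D_{n,\nu}$ in all $n$ variables, and Mellin inversion gives $V^D_{n,\nu}(s-s_1,\dots,(n-2)s-s_{n-2},\tfrac{n-2}{2}s+t,\tfrac n2 s-s_{n-1}-t)$. A final linear substitution $(t,s_{n-1})\to(s_{n-1},s_n)$ produces the stated form. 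There, the factor $2^{-1}$ comes from the $\widetilde W^D_{n,\nu}$ inversion: passing from $\log a_i$ to $\log(a_i/a_{i+1})$ ($i\le n-1$) and $\log(a_{n-1}a_n)$ has determinant $2$. The $\tfrac12$ of the beta integral is absorbed by the rescaling $t\to 2t+s_{n-1}-s$.

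\textbf{Your route.} You invert $\widetilde W^A_{n,\mu}$ in $a_1,\dots,a_{n-1}$ with $a_n$ fixed, a unimodular change of variables. The $2^{-1}$ then comes directly from the $a_n$ beta integral.

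\textbf{Comparison.} Your version is a little more economical: it needs no auxiliary variable and no final substitution, and the $\bs$ are visibly the Mellin variables of $V^D_{n,\nu}$. The paper's version parallels its later treatment of $J^n_{\nu,\mu}(s)$ in Section \ref{sec:Arch_C}. There the Gaussian is expanded as a Barnes integral in the same way, and $\widetilde W^C_{n,\nu}$ is Mellin-inverted. Both routes need the same absolute-convergence justifications, which you address adequately.

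\textbf{A small slip.} The two beta arguments $\tfrac12(ns-2s_n+|\mu|)$ and $\tfrac12(ns-2s_{n-1}+|\mu|)$ sum to $w$, not $w/2$. Your final expression, with $\GR(2w)$ in the denominator, is nonetheless the right one, since $\Gamma(w)=\pi^{w/2}\GR(2w)$ and the powers of $\pi$ then cancel.
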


\bpf
Inserting the Mellin-Barnes representation of $ \widetilde{W}_{n, \mu}^A $, we get
\begin{align*}
 I_{\nu, \mu}^n(s) 
& =\frac{1}{(2\pi \sqrt{-1})^{n-1}}  \int_{s_1,\dotsc,s_{n-1} } V_{n, \mu}^A(s_1,\dotsc,s_{n-1})
   \int_{(\R_+)^{n}} \widetilde{W}_{n,\nu}^D ( \beta[a_1,\dotsc, a_n] )  
 \\
& \times 
      a_1^{s-s_1} a_2^{s+s_1-s_2} \cdots a_{n-1}^{s+s_{n-2}-s_{n-1} } (a_n+a_n^{-1})^{-s-s_{n-1}-|\mu|} 
    \prod_{i=1}^{n}  \frac{da_i}{a_i} \prod_{i=1}^{n-1} ds_i.
\end{align*}
Since 
\[
  \int_0^{\infty} (a+a^{-1})^{-p} a^s \frac{da}{a} = \frac{ \GR(p-s) \GR(p+s)}{ 2 \GR(2p) } 
\]
for $ {\rm Re}(p \pm s) > 0 $, we have
\begin{align*}
& (a_n+a_n^{-1})^{-s-s_{n-1}-|\mu|} 
\\
& = \frac{1}{4\pi \sqrt{-1}} \int_t 
  \frac{ \GR(s+s_{n-1}+|\mu|-t) \GR(s+s_{n-1} +|\mu|+t)}{ \GR(2s+2s_{n-1}+2|\mu|)}
     \,a_n^{-t} \,dt
\\
& = \frac{1 }{2 \pi \sqrt{-1}} \int_t
  \frac{ \GR(2s-2t+|\mu|) \GR(2s_{n-1}+2t+|\mu|)}
   { \GR(2s+2s_{n-1}+2|\mu|)}  \, a_n^{-2t-s_{n-1}+s} \,dt.
\end{align*}
Then we have 
\begin{align*}
&   I_{\nu, \mu}^n(s) \\
& = \frac{1 }{(2\pi \sqrt{-1})^n} \int_{t,s_1,\dots,s_{n-1}}  V_{n, \mu}^A(s_1,\dotsc, s_{n-1}) 
    \frac{ \GR(2s-2t+|\mu|) \GR(2s_{n-1}+2t+|\mu|)}{ \GR(2s+2s_{n-1}+2|\mu|)}  
\\
& \times 
    \int_{(\R_+)^n} 
       \widetilde{W}_{n,\nu}^D ( \beta[a_1,\dotsc, a_n] )
       \prod_{i=1}^{n-2} \Bigl( \frac{a_i}{a_{i+1}} \Bigr)^{\! is-s_i} \cdot 
       \Bigl( \frac{a_{n-1}}{a_n} \Bigr)^{\! \frac{n-2}{2}s+t} (a_{n-1}a_n)^{\frac{n}{2}s-s_{n-1}-t}  
   \, \prod_{i=1}^{n} \frac{da_i}{a_i} \\
& \times ds_1\cdots ds_{n-1}  \,dt
\\
& = \frac{2^{-1} }{ (2\pi \sqrt{-1})^n} \int_{s_1,\dotsc,s_{n-1},t}  
    V_{n,\nu}^D(s-s_1,2s-s_2, \dotsc, (n-2)s-s_{n-2},  \tfrac{n-2}{2}s+t, \tfrac{n}{2}s-s_{n-1}-t) 
\\
& \times  V_{n, \mu}^A (s_1,\dotsc, s_{n-1})  
    \frac{ \GR(2s-2t+|\mu|) \GR(2s_{n-1}+2t+|\mu|)}{ \GR(2s+2s_{n-1}+2|\mu|)}   \,ds_1\cdots ds_{n-1}\, dt.
\end{align*}
We substitute $ s_i \to is -s_i $ $(1\le i \le n-2) $, $t \to s_{n-1}-\frac{n-2}{2}s$, and $s_{n-1} \to (n-1)s-s_n-s_{n-1} $, to get 
\begin{align*}
   I_{\nu, \mu}^n(s) 
& = \frac{2^{-1} }{(2\pi \sqrt{-1})^n}  \int_{s_1,\dotsc,s_n} 
      V_{n,\nu}^D(s_1, \dotsc, s_n) \\
& \times V_{n, \mu}^A (s-s_1, 2s-s_2,\dotsc, (n-2)s-s_{n-2}, (n-1)s-s_{n-1}-s_n) 
\\
& \times  
 \frac{ \GR(ns-2s_{n-1}+|\mu|) \GR(ns -2s_n+|\mu|)}{ \GR(2ns-2s_{n-1}-2s_n+2|\mu|)}   \,\prod_{i=1}^n ds_i,
\end{align*}
as desired.
\epf

\subsection{Computation of $I_{\nu,\mu}^n(s)$}   \label{subsec:SOGL_zeta}
This subsection is the main part of this paper. Our aim in this subsection is to prove the following.

\begin{thm} \label{thm:SOGL}
For $ n \ge 2 $, $ \nu=(\nu_1,\dotsc,\nu_n) \in \C^n $ and $ \mu = (\mu_1,\dotsc,\mu_n) \in \C^n $, we have 
\begin{align*}
 I_{\nu,\mu}^n(s)  = \prod_{1 \le i \le j \le n} \frac{1}{\GR(2s+\mu_i+\mu_j)}
  \prod_{1 \le i, j \le n} \{ \GR(s+\nu_i+\mu_j) \GR(s-\nu_i+\mu_j) \}.
\end{align*}
\end{thm}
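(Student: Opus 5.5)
We argue by induction on $n$, starting from the Mellin--Barnes expression for $I^n_{\nu,\mu}(s)$ in Proposition \ref{prop:I}. The goal is the recurrence announced in the introduction:
\[
 I^n_{\nu,\mu}(s)=\frac{\prod_{j=1}^{n}\GR(s+\nu_n+\mu_j)\GR(s-\nu_n+\mu_j)\prod_{i=1}^{n-1}\GR(s+\nu_i+\mu_n)\GR(s-\nu_i+\mu_n)}{\prod_{i=1}^{n}\GR(2s+\mu_i+\mu_n)}\, I^{n-1}_{\tilde\nu,\tilde\mu}(s),
\]
with $\tilde\nu=(\nu_1,\dotsc,\nu_{n-1})$ and $\tilde\mu=(\mu_1,\dotsc,\mu_{n-1})$. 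Given this recurrence, the theorem follows at once by induction, because the product of these factors over the ranks is exactly the stated product.

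For the base case $n=2$, we insert $V^D_{2,\nu}$ from Proposition \ref{prop:exp_SO2n} and $V^A_{2,\mu}(s-s_1-s_2)=2^{-1}\GR(s-s_1-s_2+\mu_1)\GR(s-s_1-s_2+\mu_2)$ into Proposition \ref{prop:I}. We integrate first in $s_1$; the $s_1$-dependent factors are $\GR(s_1\pm\frac{\nu_1-\nu_2}{2})$, $\GR(s-s_1-s_2+\mu_j)$ and the ratio $\GR(2s-2s_1+|\mu|)/\GR(4s-2s_1-2s_2+2|\mu|)$. We rewrite the $2s_1$ gamma factors using the duplication formula $\GR(2z)=\GC(z)\propto\GR(z)\GR(z+1)$, or equivalently treat them with the $I_2$/$I_4$ devices of Lemmas \ref{lem:int2} and \ref{lem:int4}. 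The balancing conditions hold because of the $\mu$-sum structure. The result is two Barnes-type integrals that collapse under Barnes' second lemma (Lemma \ref{lem:int1}(iii)) together with the transformations in Lemma \ref{lem:int1}(i)(ii). This reproduces Ikeda's triple-product value, namely $\prod_{i,j}\GR(s\pm\nu_i+\mu_j)$ divided by $\GR(2s+2\mu_1)\GR(2s+\mu_1+\mu_2)\GR(2s+2\mu_2)$.

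For the inductive step we expand $V^D_{n,\nu}(\bs)$ by Proposition \ref{prop:exp_SO2n} as an integral of $V^D_{n-1,\tilde\nu}(\bt)$ against $L_{n,\nu_n}(\bs,\bt)$. We expand $V^A_{n,\mu}$ by Proposition \ref{prop:exp_GLn} with $k=n$, so it becomes an integral of $V^A_{n-1,\tilde\mu}$ against gamma factors involving $\mu_n$. After this we interchange the order of integration and integrate out the original $s_i$ variables. This yields an integral of $V^D_{n-1,\tilde\nu}(\bt)V^A_{n-1,\tilde\mu}(\cdots)$ against an explicit kernel. We reorganize that kernel with Lemmas \ref{lem:int1}, \ref{lem:int3} and \ref{lem:int4}, applied one variable at a time, into two pieces. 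The first piece should have the shape $T_{n-1,\tilde\mu}$ of Proposition \ref{prop:new_GLn}. Running Proposition \ref{prop:new_GLn} in reverse trades $V^A_{n,\cdots}$ with an extra parameter for $V^A_{n-1}$, and at this point $a_1,a_2$ are specialised to values involving $\mu_n$ and $\pm\nu_n$. The second piece should have the shape of the Pieri-type kernel $M_{n-1}$ of Proposition \ref{prop:Pieri_SO2n}. With $\sigma$ taken to be $s+\mu_n$, that proposition integrates out against $V^D_{n-1,\tilde\nu}$ and produces the factor
\[
\prod_{i<n}\GR(s+\mu_n\pm\nu_i)\big/\GR(2s+2\mu_n).
\]
The remaining integral must then be recognized, via Proposition \ref{prop:I} at rank $n-1$ (with the last two variables recombined by the same Mellin transform of $(a+a^{-1})^{-p}$), as $I^{n-1}_{\tilde\nu,\tilde\mu}(s)$. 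The leftover gamma factors, $\prod_j\GR(s\pm\nu_n+\mu_j)/\prod_{i<n}\GR(2s+\mu_i+\mu_n)$, should come out of the evaluations by Barnes' first and second lemmas.

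The main obstacle is the kernel bookkeeping in the inductive step. Specifically, we must find the right sequence of Barnes transformations (Lemma \ref{lem:int3}, and Lemma \ref{lem:int4} for the $2s_{n-1},2s_n$ factors coming from the $\GR(ns-2s_{n-1}+|\mu|)$ terms) so that the kernel splits exactly into a $T_{n-1}$-piece and an $M_{n-1}$-piece with matching parameters. My first attempt will be to process the variables in the order $s_n, s_{n-1}$, then $s_{n-2},\dotsc,s_1$, mirroring the proof of Lemma \ref{lem:keylemma_Pieri_SO2n}. I will check the balancing conditions at each step, and work out $n=3$ by hand first to fix the choices of $a_1$, $a_2$ and $\sigma$.
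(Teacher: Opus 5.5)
You have the right target recurrence, base case and closing moves (Proposition \ref{prop:Pieri_SO2n} with $\sigma=s+\mu_n$, then Proposition \ref{prop:I} at rank $n-1$), but the inductive step as planned cannot produce the recurrence. After you expand $V^A_{n,\mu}$ with $k=n$, the explicit kernel sees $\mu$ only through $\mu_n$ and $|\mu|$. The individual $\mu_j$ with $j<n$ live only inside $V^A_{n-1,\tilde\mu}$. So no sequence of Barnes-lemma evaluations on the kernel can yield the factors $\GR(s\pm\nu_n+\mu_j)$, $j<n$, which you say ``should come out of'' them. Those factors require an identity for the Whittaker kernel itself, namely Proposition \ref{prop:Pieri_GLn} with $\sigma=s+\nu_n$ and with $\sigma=s-\nu_n$, and your plan never invokes it. Your use of Proposition \ref{prop:new_GLn} also does not fit together. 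After a single expansion there is no $V^A_n$ left to trade. Running that proposition from right to left puts $\prod_i\GR(2s+a_2+\mu_i)$ in the numerator, the opposite of what the recurrence needs. Applying it from left to right to a $T_{n-1,\tilde\mu}$ raises the rank to $n$, away from $I^{n-1}_{\tilde\nu,\tilde\mu}$.

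The missing mechanism is a descent by two ranks in $V^A$ followed by an ascent by one.
\begin{enumerate}
\item The paper first expands $V^A_{n,\mu}$ removing $\mu_{n-1}$ and simplifies $s_1,\dots,s_{n-2}$ with Lemma \ref{lem:int1}(ii), which yields $\GR(s-\nu_n+\mu_{n-1})$. It then applies Proposition \ref{prop:Pieri_GLn} with $\sigma=s+\nu_n$.
\item It expands again removing $\mu_n$, repeats Lemma \ref{lem:int1}(ii) on $s_1,\dots,s_{n-3}$, and applies Proposition \ref{prop:Pieri_GLn} with $\sigma=s-\nu_n$.
\item The variables $s_{n-2},s_{n-1},s_n$ are handled by a separate edge identity, Lemma \ref{lem:edge_integral}. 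Its proof uses Lemma \ref{lem:int3}, two forward and two reverse uses of Lemma \ref{lem:int4}, and Barnes' second lemma. It contributes $\GR(s+\mu_{n-1}+\nu_n)\GR(s+\mu_n-\nu_n)/\GR(2s+\mu_{n-1}+\mu_n)$.
\item Only then does the kernel take the form $T_{n-2,(\mu_1,\dots,\mu_{n-2})}$, with $(a_1,a_2,\sigma)=(\mu_{n-1},\mu_n,s_{n-1}+s_n)$; no $\nu_n$ enters. Proposition \ref{prop:new_GLn} is applied forward. This restores $V^A_{n-1,\tilde\mu}$ at exactly the arguments of Proposition \ref{prop:I}, contributes $1/\prod_{i\le n-2}\GR(2s+\mu_i+\mu_n)$, and leaves precisely $M_{n-1}(\br,\bt;s+\mu_n)$.
\end{enumerate}
These steps are the substance of the proof, and your proposal leaves them as unspecified ``kernel bookkeeping.''
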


As in our previous paper \cite{IS13}, our argument is by induction on $n$.
The case of $n=2$ is nothing but the archimedean zeta integral for the triple $L$-function on ${\rm GL}_2$, 
which was evaluated by Ikeda \cite{Ik99}. Here, we present a very short proof of Ikeda's result.
To complete the proof of Theorem \ref{thm:SOGL}, we then carry out our induction argument, in \S \ref{subsec:SOGL_induction}.

\begin{prop} \label{prop:SOGL_induction}
{\rm(i)} $($\cite{Ik99}$)$ Theorem \ref{thm:SOGL} is true for $n=2$.
\medskip \\
{\rm (ii)} Let $ n \ge 3 $.
If we write $ \tilde{\nu}= ( \nu_1,\dotsc,\nu_{n-1}) $ and $\tilde{\mu}= (\mu_1, \dotsc, \mu_{n-1}) $, then we have
\begin{align} \label{eqn:SO_zeta_rec}
\begin{split}
   I_{\nu,\mu}^n(s) 
 & =  \prod_{i=1}^n \frac{ \GR(s+\nu_n+\mu_i) \GR(s-\nu_n+\mu_i) }{  \GR(2s+\mu_i+\mu_n) }  
\\
 & \times 
   \prod_{i=1}^{n-1} \{ \GR(s+\nu_i+\mu_n)  \GR(s-\nu_i+\mu_n) \} \cdot 
  I_{\tilde{\nu}, \tilde{\mu}}^{n-1}(s). 
\end{split}
\end{align}
\end{prop}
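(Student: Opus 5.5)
For part (i) I will start from Proposition \ref{prop:I} with $n=2$. There $V^A_{2,\mu}(s-s_1-s_2)=2^{-1}\GR(s-s_1-s_2+\mu_1)\GR(s-s_1-s_2+\mu_2)$, and $V^D_{2,\nu}(s_1,s_2)$ is the explicit product in Proposition \ref{prop:exp_SO2n}. The integrand couples $s_1$ and $s_2$ only through $s_1+s_2$. So I will substitute $w=s_1+s_2$. The inner integral over $s_1$ (with $w$ fixed) is then of the shape
\[
\GR(s_1\pm\tfrac{\nu_1-\nu_2}{2})\,\GR(w-s_1\pm\tfrac{\nu_1+\nu_2}{2})\,\GR(2s+|\mu|-2s_1)\,\GR(2s+|\mu|-2w+2s_1),
\]
divided by the $w$-dependent factor $\GR(4s-2w+2|\mu|)$. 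The two doubled-argument gamma factors make this an $I_2$/$I_4$-type integral.

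To handle it, I will use the duplication formula $\GR(2z)=\GC(z)\,\cdot$const, where $\GC(z)=\GR(z)\GR(z+1)$ up to the normalization in \eqref{gamma-f}. Alternatively, I will first integrate $s_1$ against the two $\nu$-dependent pairs plus one doubled factor, using Lemma \ref{lem:int2}, whose hypothesis $a+b+c+d+e=f$ should hold precisely because of the shape of the $(a_n+a_n^{-1})^{-p}$ Mellin transform. This yields an $I_1$ in $w$. I will then apply Lemma \ref{lem:int1}(ii) to reach a balanced configuration and finish with Barnes' second lemma (Lemma \ref{lem:int1}(iii)). The result should be the eight factors $\GR(s\pm\nu_i+\mu_j)$ over $\GR(2s+2\mu_1)\GR(2s+\mu_1+\mu_2)\GR(2s+2\mu_2)$. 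Checking the balancing conditions is the delicate part here, but it is routine.

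For part (ii) I will insert the recursion of Proposition \ref{prop:exp_SO2n} for $V^D_{n,\nu}$ into Proposition \ref{prop:I}. This expresses $I^n$ as an integral of $V^D_{n-1,\tilde\nu}(\bt)$ against $\int_{\bs}L_{n,\nu_n}(\bs,\bt)\,V^A_{n,\mu}(\cdots)\,(\text{gamma quotient})$. The target is to turn the $\bs$-integral into $V^A_{n-1,\tilde\mu}$ evaluated at the $(n-1)$-analogue of the arguments in Proposition \ref{prop:I}, times the claimed product. To get there, I will expand $V^A_{n,\mu}$ by Proposition \ref{prop:exp_GLn} with $k=n$, which peels off $\mu_n$ and introduces $V^A_{n-1,\tilde\mu}$ together with gamma factors linking $\bs$ to new variables. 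I will then integrate out the $s_i$ one at a time using Lemma \ref{lem:int3}, Lemma \ref{lem:int4}, and Lemma \ref{lem:int1}, much as in the proof of Lemma \ref{lem:keylemma_Pieri_SO2n}. Factors such as $\GR(s\pm\nu_n+\mu_j)$ will not emerge from a single Barnes lemma. I expect to produce them through two Pieri-type steps:
\begin{itemize}
\item Proposition \ref{prop:Pieri_SO2n} with $\sigma=s+\mu_n$ will generate $\prod_{i}\GR(s+\mu_n\pm\nu_i)/\GR(2s+2\mu_n)$ from $V^D_{n-1}$, once the kernel has been rearranged into the form $M_{n-1}$.
\item Proposition \ref{prop:new_GLn}, applied with $a_1=\mu_n$ and suitable $a_2,\sigma$, will trade the $V^A_{n-1,\tilde\mu}$-integral of the $T_{n-1}$ shape for a $V^A_{n,\mu}$-type expression. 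This brings in $\prod_{i<n}\GR(2s+\mu_i+\mu_n)^{-1}$.
\item Proposition \ref{prop:Pieri_GLn} with $\sigma=s\pm\nu_n$ will then absorb the remaining $\nu_n$-dependent gamma factors, giving $\prod_i\GR(s\pm\nu_n+\mu_i)$.
\end{itemize}

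The order I will follow is: expand, then reorganize by Lemma \ref{lem:int1}(i)/(ii) and Lemma \ref{lem:int3} until a $T_{n-1,\tilde\mu}$ appears, then apply Proposition \ref{prop:new_GLn}, then Proposition \ref{prop:Pieri_GLn}, then Proposition \ref{prop:Pieri_SO2n}, and finally recognize $I^{n-1}_{\tilde\nu,\tilde\mu}(s)$ via Proposition \ref{prop:I} at rank $n-1$.

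The main obstacle is the bookkeeping in the reorganization step. The kernel must be brought precisely into the form of $T_{n-1}$ and of $M_{n-1}$, with every balancing condition of Lemma \ref{lem:int3} satisfied. I would first work out $n=3$ by hand to pin down the substitutions ($s_i\to is-\cdots$) and the correct choice of $a_2$. Only then would I write the general chain of Lemma \ref{lem:int3} applications, ordered from $s_1$ upward to $s_{n-2}$ and then handling $s_{n-1},s_n$ with the doubled-argument lemma (Lemma \ref{lem:int4}).
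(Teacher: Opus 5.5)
\textbf{(ii) does not close on the ${\rm GL}_n$ side.} Expanding $V^A_{n,\mu}$ by Proposition \ref{prop:exp_GLn} with $k=n$ already gives $V^A_{n-1,\tilde\mu}$, which is exactly what Proposition \ref{prop:I} at rank $n-1$ needs. If you then feed a $T_{n-1,\tilde\mu}$ into Proposition \ref{prop:new_GLn}, the output is $V^A_{n,(\tilde\mu,a_1)}$. With $a_1=\mu_n$ you are back at $V^A_{n,\mu}$ and can never recognize $I^{n-1}_{\tilde\nu,\tilde\mu}(s)$.

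The denominators $\GR(2s+\mu_i+\mu_n)$, $i\le n-2$, come out of Proposition \ref{prop:new_GLn} only when it is read upward; read downward, they land in the numerator. Likewise, Proposition \ref{prop:Pieri_GLn} applied to a rank-$m$ kernel yields only the $m$ factors $\GR(\sigma+\mu_i)$ for its own parameters. This is why the paper descends two ranks before ascending one:
\begin{enumerate}
\item expand with $k=n-1$ to $V^A_{n-1,(\mu_1,\ldots,\mu_{n-2},\mu_n)}$;
\item sweep $s_1,\ldots,s_{n-2}$ with Lemma \ref{lem:int1}(ii), which produces $\GR(s-\nu_n+\mu_{n-1})$;
\item absorb by Proposition \ref{prop:Pieri_GLn} at $\sigma=s+\nu_n$;
\item expand again, removing $\mu_n$, and sweep again;
\item absorb at $\sigma=s-\nu_n$, which brings you down to $V^A_{n-2,(\mu_1,\ldots,\mu_{n-2})}$;
\item apply Proposition \ref{prop:new_GLn} to $T_{n-2,(\mu_1,\ldots,\mu_{n-2})}$ with $(a_1,a_2)=(\mu_{n-1},\mu_n)$ and $\sigma=s_{n-1}+s_n$, which returns $V^A_{n-1,\tilde\mu}$;
\item finish with Proposition \ref{prop:Pieri_SO2n} at $\sigma=s+\mu_n$.
\end{enumerate}
For $n=3$, the second expansion and Proposition \ref{prop:new_GLn} are replaced by the explicit $V^A_2$.

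\textbf{A second ingredient is missing.} Between the second Pieri step and Proposition \ref{prop:new_GLn} sits the three-variable edge identity, Lemma \ref{lem:edge_integral}, which integrates out $s_{n-2},s_{n-1},s_n$ jointly. It reshapes the kernel into the $T_{n-2}$ and $M_{n-1}(\br,\bt;s+\mu_n)$ forms. It is also where $\GR(s+\mu_{n-1}+\nu_n)\GR(s+\mu_n-\nu_n)/\GR(2s+\mu_{n-1}+\mu_n)$ comes from, exactly the factors the two Pieri steps miss. It is not a single use of Lemma \ref{lem:int4}. It takes Lemma \ref{lem:int3}, then Lemma \ref{lem:int4} forward and in reverse on a double integral, then Barnes' second lemma, then Lemma \ref{lem:int4} forward and in reverse once more.

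\textbf{In (i), the substitution $w=s_1+s_2$ is a wrong turn.} At fixed $w$, the $s_1$-integrand has two doubled gamma factors in the numerator and no $s_1$-dependent denominator. So neither Lemma \ref{lem:int2} nor Lemma \ref{lem:int4} applies, and duplication only yields a $4+4$ Barnes integral that no lemma of Section \ref{sec:Barnes} evaluates. Keep the original variables instead. For fixed $s_1$, the $s_2$-integral after $s_2\to-s_2$ is exactly $I_2$, and $a+b+c+d+e=f$ holds automatically. Then Barnes' second lemma in $s_1$ and Barnes' first lemma in the remaining variable finish the computation; Lemma \ref{lem:int1}(ii) is not needed.
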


Let us prove Proposition \ref{prop:SOGL_induction}(i); the proof of (ii) is given in the next subsection.

By the explicit formulas for $ V_{2,(\nu_1,\nu_2)}^D(s_1, s_2) $ and $V_{2,(\mu_1,\mu_2)}^A(s_1)$ (see Propositions \ref {prop:exp_SO2n} and \ref{prop:exp_GLn}), we have
\begin{align*}
 I_{(\nu_1,\nu_2), (\mu_1,\mu_2)}^2 (s)
& = \frac{1}{(4\pi \sqrt{-1})^2} \int_{s_1,s_2}
     \GR(s_1 +\tfrac{\nu_1-\nu_2}{2}) \GR(s_1-\tfrac{\nu_1-\nu_2}{2})
\\
& \times  \GR(s_2+\tfrac{\nu_1+\nu_2}{2}) \GR(s_2-\tfrac{\nu_1+\nu_2}{2})\GR(s-s_1-s_2+\mu_1) \GR(s-s_1-s_2+\mu_2) \\
& \times 
 \frac{ \GR(2s-2s_1+\mu_1+\mu_2) \GR(2s-2s_2+\mu_1+\mu_2)}{ \GR(4s-2s_1-2s_2+2\mu_1+2\mu_2)} \,ds_1 ds_2.
\end{align*} 
After replacing $ s_2 $ by $ -s_2 $, we apply Lemma \ref{lem:int2}, with 
\begin{align*}
 (a,b,c) &  
 = (s-s_1+\mu_1, s-s_1+\mu_2, 2s+\mu_1+\mu_2),  \\
 (d,e,f) & 
 = ( \tfrac{\nu_1+\nu_2}{2}, -\tfrac{\nu_1+\nu_2}{2} , 4s+2\mu_1+2\mu_2-2s_1),
\end{align*}to the integral in $s_2$,
to obtain
\begin{align*}
 I_{(\nu_1,\nu_2), (\mu_1,\mu_2)}^2 (s)
& = \frac{1}{ \GR(2s+\mu_1+\mu_2)  } \cdot \frac{1}{(4\pi \sqrt{-1})^2 } \int_{s_1,s_2} 
   \GR(s_1+ \tfrac{\nu_1-\nu_2}{2}) \GR(s_1 -\tfrac{\nu_1-\nu_2}{2}) 
\\
&  \times  \GR(s-s_1+\tfrac{\nu_1+\nu_2}{2}+\mu_1) \GR(s-s_1-\tfrac{\nu_1+\nu_2}{2}+\mu_1) \GR(s_2+ s-s_1+\mu_2) 
   \\
& \times
   \frac{  \GR(s_2+2s+\mu_1+\mu_2+\tfrac{\nu_1+\nu_2}{2}) \GR(s_2+2s+\mu_1+\mu_2-\tfrac{\nu_1+\nu_2}{2})}
 { \GR(s_2+3s-s_1+2\mu_1+\mu_2) } 
\\
& \times \GR(-s_2+\tfrac{\nu_1+\nu_2}{2}) \GR(-s_2-\tfrac{\nu_1+\nu_2}{2}) \,ds_1 ds_2.
\end{align*}
Using Barnes' second lemma for the integral in $s_1$, we get
\begin{align*}
  I_{(\nu_1,\nu_2), (\mu_1,\mu_2)}^2 (s)
& = \frac{ \GR(s+\nu_1+\mu_1) \GR(s-\nu_1+\mu_1) \GR(s+\nu_2+\mu_1) \GR(s-\nu_2+\mu_1) }
  {  \GR(2s+\mu_1+\mu_2) \GR(2s+2\mu_1) } 
\\
& \times 
\frac{1}{4\pi \sqrt{-1}} \int_{s_2} 
  \GR(s+s_2+\tfrac{\nu_1-\nu_2}{2}+\mu_2) \GR(s+s_2-\tfrac{\nu_1-\nu_2}{2}+\mu_2)
\\
& \times 
  \GR(-s_2+\tfrac{\nu_1+\nu_2}{2}) \GR(-s_2-\tfrac{\nu_1+\nu_2}{2}) \,ds_2.
\end{align*}
Then Barnes' first lemma, applied to this remaining integral, implies the assertion.

\subsection{Proof of Proposition \ref{prop:SOGL_induction}(ii)} \label{subsec:SOGL_induction}
In this subsection, we prove the second statement of Proposition \ref{prop:SOGL_induction}. 

We set
$ \bs = (s_1,\dotsc,s_n) \in \C^n $,
$ \bt = (t_1,\dotsc, t_{n-1}) \in \C^{n-1} $, and 
$ \bq = (q_1,\dotsc, q_{n-2}) \in \C^{n-2} $.
By Propositions \ref{prop:exp_GLn} and \ref{prop:exp_SO2n}, we have 
\begin{align*}
   I_{\nu, \mu}^n (s) 
& = \frac{2^{n-2}}{(4\pi \sqrt{-1})^{3n-3}}  \int_{s_1,\dotsc,s_n, t_1,\dotsc, t_{n-1}, q_1,\dotsc,q_{n-2} } 
      V_{n-1, \tilde{\nu}}^D(\bt) \\
& \times V_{n-1, (\mu_1,\dotsc, \mu_{n-2}, \mu_n) }^A(\bq ) 
   P(\bs,\bt, \bq; s) \prod_{i=1}^n ds_i \prod_{i=1}^{n-1} dt_i  \prod_{i=1}^{n-2} dq_i,
\end{align*}
where
\begin{align*}
   P(\bs,\bt, \bq; s)
& = L_{n,\nu_n}(\bs,\bt) 
  \prod_{i=1}^{n-2} \GR(is-s_i-q_i ) 
   \prod_{i=1}^{n-2} \GR(is-s_{i}-q_{i-1} + \mu_{n-1}) 
\\
& \times 
  \GR((n-1)s-s_{n-1} -s_n-q_{n-2} +\mu_{n-1}) \\
& \times \GR( (n-1)s-s_{n-1}-s_n + |\mu|-\mu_{n-1}) 
\\
& \times  
 \frac{ \GR(ns-2s_{n-1}+|\mu|) \GR(ns -2s_n+|\mu|)}{ \GR(2ns-2s_{n-1}-2s_n+2|\mu|)}.
\end{align*}
Here we understand that $ q_0= 0$.

\medskip 

\noindent 
{\it Step 1.}
For  $1\le i\le n-3$, we apply Lemma \ref{lem:int1}(ii) to the integral in $s_i$, with 
$$
  (a,b,c, d,e,  f)
 = (-s_{i-1}+\nu_n,  -t_{i-1}-\nu_n,  -t_i,   is-q_i , s+s_{i-1}+\mu_{n-1},  s_{i+1}-t_{i-1}-t_i),
$$
and also substitute $ s_i \to s_i+s_{i-1}-s_{i+1} +s +\mu_{n-1}-\nu_n$ for each $i$ in succession.

Next:  if $ n=3 $, skip the following computation,  and proceed to    the treatment, below, of the integral in $s_{n-2}$. If $n>3$ then, since
\begin{align*}
&  \GR((i+1)s-s_{i+1}-q_i+\mu_{n-1}) \GR(s_{i+1}-t_i+\nu_n) \GR(s_{i+1}-t_{i-1}) 
\\
& \times 
\int_{s_i} 
 \frac{ \GR(-s_{i-1}+s_i+\nu_n) \GR(s_i-t_{i-1}-\nu_n) \GR(s_i-t_i) }{ \GR(s_i+s_{i+1}-t_{i-1}-t_i) }\\
 & \times \GR(is-s_i-q_i) \GR(s+s_{i-1}-s_i+\mu_{n-1}) \,ds_i
\\
& = \GR(is-s_{i-1}-q_i + \nu_n) \GR(s+s_{i-1}-t_{i-1}+\mu_{n-1}-\nu_n) \GR(s+s_{i-1}-t_i+\mu_{n-1})
\\
& \times 
 \int_{s_i} \frac{ \GR(s+s_i-s_{i+1}+\mu_{n-1}) \GR(s_i-t_i) \GR(s_i-t_{i-1}-\nu_n) }{ \GR(s+s_{i-1}+s_i-t_{i-1}-t_i+\mu_{n-1}-\nu_n) }\\
 & \times \GR(is-s_i-q_i) \GR(-s_i+s_{i+1}+\nu_n)  \,ds_i,
\end{align*}
we get 
\begin{align*}
   I_{\nu, \mu}^n(s) 
& = \GR(s+\mu_{n-1}-\nu_n) \cdot \frac{2^{n-2}}{(4\pi \sqrt{-1})^{3n-3}}  
    \int_{s_1,\dotsc,s_n, t_1,\dotsc, t_{n-1}, q_1,\dotsc,q_{n-2} } 
      V_{n-1, \tilde{\nu}}^D(\bt) \\
& \times V_{n-1, (\mu_1,\dotsc,\mu_{n-2},\mu_n) }^A(\bq) 
\\
& \times \prod_{i=1}^{n-3} \GR(s_i-t_i) 
   \prod_{i=1}^{n-4} \frac{1}{\GR(s+s_i+s_{i+1}-t_i-t_{i+1} +\mu_{n-1}-\nu_n)  } 
\\
& \times 
   \prod_{i=1}^{n-3} \GR(s+s_{i-1}-t_i+\mu_{n-1}) 
   \prod_{i=1}^{n-2}  \GR(s_i-t_{i-1}-\nu_n) 
\\
& \times 
   \frac{ \GR(s_{n-2}-t_{n-2}-t_{n-1}) \GR(s_{n-1}+s_n-t_{n-3}) \GR(s_{n-1}-t_{n-2} + \frac{\nu_n}{2}) }
  {\GR(s_{n-2}+s_{n-1}+s_n-t_{n-3}-t_{n-2}-t_{n-1}) } 
\\
& \times \GR(s_{n-1}-t_{n-1}-\tfrac{\nu_n}{2}) \GR(s_{n}-t_{n-1}+\tfrac{\nu_n}{2}) \GR(s_{n}-t_{n-2}-\tfrac{\nu_n}{2})
\\  
& \times 
  \prod_{i=1}^{n-2} \GR(is-s_i-q_i ) 
   \prod_{i=1}^{n-3} \GR(is-s_{i-1}-q_i + \nu_n) 
\\
& \times \prod_{i=1}^{n-4} \GR(s+s_i-t_i + \mu_{n-1}-\nu_n) \cdot 
    \frac{ \GR(s+s_{n-3}-s_{n-2}+\mu_{n-1})  \GR(-s_{n-3}+s_{n-2}+\nu_n)}{ \GR(s+s_1-t_1+\mu_{n-1}-\nu_n)}
\\
& \times 
 \GR( (n-1)s-s_{n-1}-s_n + |\mu|-\mu_{n-1}) \GR((n-1)s-s_{n-1} -s_n-q_{n-2} +\mu_{n-1}) 
\\
& \times  
 \frac{ \GR(ns-2s_{n-1}+|\mu|) \GR(ns -2s_n+|\mu|)}{ \GR(2ns-2s_{n-1}-2s_n+2|\mu|)}   
  \,\prod_{i=1}^n ds_i \prod_{i=1}^{n-1} dt_i  \prod_{i=1}^{n-2} dq_i.
\end{align*}
Here we understand that $ s_0 =t_0 =0 $.

We now apply Lemma \ref{lem:int1}(ii), with  
\begin{align*}
  (a,b,c)
& = (-s_{n-3}+\nu_n,  -t_{n-3}-\nu_n,  -t_{n-2}-t_{n-1}),   \\
  (d,e, f)
& = ( (n-2)s-q_{n-2},  s+s_{n-3}+\mu_{n-1},  s_{n-1}+s_n-t_{n-3}-t_{n-2}-t_{n-1})
\end{align*}to the integral in $s_{n-2}$,
and then substitute $ s_{n-2} \to s_{n-2}+s_{n-3}-s_{n-1}-s_n+s+\mu_{n-1}-\nu_n $.
We get 
\begin{align*}
   I_{\nu, \mu}^n(s) 
& = \GR(s-\nu_n+\mu_{n-1}) \cdot \frac{2^{n-2}}{(4\pi \sqrt{-1})^{3n-3}}  
    \int_{s_1,\dotsc,s_n, t_1,\dotsc, t_{n-1}, q_1,\dotsc,q_{n-2} } 
      V_{n-1, \tilde{\nu}}^D(\bt) \\
& \times V_{n-1, (\mu_1,\dotsc,\mu_{n-2},\mu_n)  }^A(\bq) 
\\
& \times \prod_{i=1}^{n-3} \GR(s_i-t_i) 
    \prod_{i=1}^{n-4} \frac{1}{ \GR(s+s_i+s_{i+1}-t_i-t_{i+1} +\mu_{n-1}-\nu_n)  } 
\\
& \times 
   \prod_{i=1}^{n-3} \GR(s+s_{i-1}-t_i+\mu_{n-1}) \cdot \GR(s+s_{n-3}-t_{n-2}-t_{n-1}+\mu_{n-1}) 
\\
& \times 
   \prod_{i=1}^{n-2}  \GR(s_i-t_{i-1}-\nu_n) 
\\
& \times 
   \frac{ \GR(s_{n-2}-t_{n-2}-t_{n-1})  \GR(s_{n-1}-t_{n-2} + \frac{\nu_n}{2}) }
  {\GR(s+s_{n-2}+s_{n-3}-t_{n-3}-t_{n-2}-t_{n-1} + \mu_{n-1}-\nu_n)  } 
\\
& \times \frac{ \GR(s_{n-1}-t_{n-1}-\tfrac{\nu_n}{2}) \GR(s_{n}-t_{n-1}+\tfrac{\nu_n}{2}) \GR(s_{n}-t_{n-2}-\tfrac{\nu_n}{2})}{ 
   \GR(s_{n-1}+s_n-t_{n-2}-t_{n-1}+\nu_n) } 
\\  
& \times 
  \prod_{i=1}^{n-2} \GR(is-s_i-q_i) 
   \prod_{i=1}^{n-2} \GR(is-s_{i-1}-q_i+ \nu_n) 
\\
& \times \prod_{i=2}^{n-3} \GR(s+s_i-t_i + \mu_{n-1}-\nu_n) 
  \cdot \GR(s+s_{n-2}-s_{n-1}-s_n+\mu_{n-1})  \\
& \times 
 \GR(-s_{n-2}+s_{n-1}+s_n+\nu_n)
 \GR( (n-1)s-s_{n-1}-s_n + |\mu|-\mu_{n-1}) 
\\
& \times  
 \frac{ \GR(ns-2s_{n-1}+|\mu|) \GR(ns -2s_n+|\mu|)}{ \GR(2ns-2s_{n-1}-2s_n+2|\mu|)}   
  \,\prod_{i=1}^n ds_i \prod_{i=1}^{n-1} dt_i  \prod_{i=1}^{n-2} dq_i.
\end{align*}

\noindent 
{\it Step 2}. 
We  apply Proposition \ref{prop:Pieri_GLn},  with 
\[
  s_i = is-s_i \ \ (1\le i \le n-2), \qquad  \sigma = s + \nu_n ,
\]to the multiple integral in  $ q_1,q_2,\dotsc, q_{n-2},$
to get 
\begin{align*}
   I_{\nu, \mu}^n(s) 
& = \GR(s-\nu_n+\mu_{n-1}) \prod_{i=1}^{n-2} \GR(s+\nu_n+\mu_i) \cdot \GR(s+\nu_n+\mu_n) \\
& \times \frac{2^{n-2}}{(4\pi \sqrt{-1})^{2n-1}}  
    \int_{s_1,\dotsc,s_n, t_1,\dotsc, t_{n-1}} 
      V_{n-1, \tilde{\nu}}^D( \bt) \\
& \times V_{n-1, (\mu_1,\dotsc,\mu_{n-2},\mu_n) }^A(s-s_1, 2s-s_2,\dotsc, (n-2)s-s_{n-2}) 
\\
& \times\prod_{i=1}^{n-3} \GR(s_i-t_i)
   \prod_{i=1}^{n-4} \frac{1}{ \GR(s+s_i+s_{i+1}-t_i-t_{i+1} +\mu_{n-1}-\nu_n)  } 
\\
& \times 
   \prod_{i=1}^{n-3} \GR(s+s_{i-1}-t_i+\mu_{n-1}) 
   \prod_{i=1}^{n-2}  \GR(s_i-t_{i-1}-\nu_n) 
\\
& \times 
   \frac{ \GR(s+s_{n-3}-t_{n-2}-t_{n-1}+\mu_{n-1})  \GR(s_{n-2}-t_{n-2}-t_{n-1})   }
  {\GR(s+s_{n-2}+s_{n-3}-t_{n-3}-t_{n-2}-t_{n-1} + \mu_{n-1}-\nu_n)  } 
\\
& \times 
  \frac{ \GR(s_{n-1}-t_{n-2} + \tfrac{\nu_n}{2})
   \GR(s_{n-1}-t_{n-1}-\tfrac{\nu_n}{2})  }{
   \GR(s_{n-1}+s_n-t_{n-2}-t_{n-1}+\nu_n)} 
\\  
& \times \GR(s_{n}-t_{n-1}+\tfrac{\nu_n}{2}) \GR(s_{n}-t_{n-2}-\tfrac{\nu_n}{2})
\\
& \times \prod_{i=2}^{n-3} \GR(s+s_i-t_i + \mu_{n-1}-\nu_n) 
\\
& \times 
  \GR(s+s_{n-2}-s_{n-1}-s_n+\mu_{n-1})  \GR(-s_{n-2}+s_{n-1}+s_n+\nu_n)
\\
& \times 
 \frac{\GR( (n-1)s-s_{n-1}-s_n + |\mu|-\mu_{n-1})}{ \GR( (n-1)s-s_{n-2} + |\mu| -\mu_{n-1}+\nu_n) }     
\\
& \times  
 \frac{ \GR(ns-2s_{n-1}+|\mu|) \GR(ns -2s_n+|\mu|)}{ \GR(2ns-2s_{n-1}-2s_n+2|\mu|)}   
  \,\prod_{i=1}^n ds_i \prod_{i=1}^{n-1} dt_i.
\end{align*}

Now if $n=3$, we omit Step 3 below, and proceed directly from the expression obtained in Step 2 to the application of 
Lemma \ref{lem:edge_integral} in Step 4.

Otherwise, we have 

\noindent 
{\it Step 3}. For $ n \ge 4 $, use Proposition \ref{prop:exp_GLn} for $V_{n-1, (\mu_1,\dotsc,\mu_{n-2},\mu_n) }^A$:
\begin{align*}
& V_{n-1, (\mu_1,\dotsc,\mu_{n-2},\mu_n) }^A(s-s_1, 2s-s_2, \dotsc, (n-2)s-s_{n-2})  
\\
& = \frac{2^{-1}}{ (4\pi \sqrt{-1})^{n-3} } \int_{q_1,\dotsc, q_{n-3}} V_{n-2,(\mu_1,\dotsc, \mu_{n-2})}^A(\bq') \\
& \times 
  \prod_{i=1}^{n-3} \GR(is-s_i-q_i ) \cdot \GR((n-2)s-s_{n-2}+|\mu|-\mu_{n-1}-\mu_n) \\
& \times 
  \prod_{i=1}^{n-2} \GR(is-s_i -q_{i-1}+ \mu_{n}) \prod_{i=1}^{n-3} dq_i
\end{align*}
with $ \bq' = (q_1,\dotsc, q_{n-3}) $.

Now consider the integrals  in $ s_1 $ through $s_{n-3}$.
If $ n \ge 5 $, we apply Lemma \ref{lem:int1}(ii) to the integral in $s_i$,  with \begin{align*}
  (a,b,c) & =  (-s_{i-1}-\nu_n,  -t_{i},  s-t_{i+1}+\mu_{n-1}),  \\ 
  (d,e,f)  & = ( is-q_i , s+s_{i-1}+\mu_{n},  s+s_{i+1}-t_{i}-t_{i+1}+\mu_{n-1}-\nu_n),
\end{align*}
and then substitute $ s_i \to s_i+s_{i-1}-s_{i+1} +s +\mu_n +\nu_n$, for $i=1,2,\ldots, n-4$ successively. 

We next apply the same lemma to the integral in $s_{n-3}$, with  
\begin{align*}
(a,b,c) & = ( -s_{n-4}-\nu_n,  -t_{n-3},  s-t_{n-2}-t_{n-1}+\mu_{n-1} ), \\
 (d,e,f) & =  ( (n-3)s-q_{n-3},  s+s_{n-4}+\mu_{n},  s+s_{n-2}-t_{n-3}-t_{n-2}-t_{n-1} +\mu_{n-1}-\nu_n),
\end{align*}
then substitute $ s_{n-3} \to s_{n-3}+s_{n-4}-s_{n-2}+s+\mu_{n}+\nu_n$. The result is that
\begin{align*}
   I_{\nu, \mu}^n(s) 
& = \GR(s-\nu_n+\mu_{n-1}) \prod_{i=1}^{n-2} \GR(s+\nu_n+\mu_i) \cdot \GR(s+\nu_n+\mu_n)  \\
& \times \frac{2^{n-3}}{(4\pi \sqrt{-1})^{3n-4}}  
    \int_{s_1,\dotsc,s_n, t_1,\dotsc, t_{n-1},q_1,\dotsc,q_{n-3}} 
      V_{ n-1, \tilde{\nu}}^D(\bt )  
   V_{n-2, (\mu_1,\dotsc, \mu_{n-2})}^A (\bq')
\\
& \times \prod_{i=1}^{n-3} \GR(is-s_{i-1}-q_i-\nu_n) \cdot  \GR(-s_{n-3}+s_{n-2}-\nu_n) 
\\
& \times     \prod_{i=1}^{n-3} \GR(is-s_i-q_i) \cdot 
 \GR( (n-2)s-s_{n-2} + |\mu|-\mu_{n-1}-\mu_n) 
\\
& \times 
 \prod_{i=1}^{n-3} \GR(s_i-t_i)   
  \prod_{i=1}^{n-3} \GR(s+s_{i-1}-t_i+\mu_{n})
\\
& \times      
    \prod_{i=1}^{n-4} \GR(2s+s_{i-1}-t_{i+1}+\mu_{n-1}+\mu_n) \cdot \GR(2s+s_{n-4}-t_{n-2}-t_{n-1}+\mu_{n-1}+\mu_n)
\\
& \times\frac{ \GR(s+s_{n-3}-s_{n-2}+\mu_{n})  }{  
\prod_{i=1}^{n-4} \GR(2s+s_{i-1}+s_{i} -t_i-t_{i+1} + \mu_{n-1}+\mu_n)   } 
\\
& \times 
 \frac{ 1}{  \GR(2s+s_{n-4} +s_{n-3}-t_{n-3}-t_{n-2}-t_{n-1}+\mu_{n-1}+\mu_n)}
  \\
& \times   
    \prod_{i=1}^{n-3} \GR(s+s_{i-1}-t_i+\mu_{n-1}) \cdot \GR(s+s_{n-3}-t_{n-2}-t_{n-1}+\mu_{n-1}) \\
& \times 
   \frac{  \GR(s_{n-2}-t_{n-2}-t_{n-1})   }
  {\GR(s+s_{n-2}-t_{n-2}-t_{n-1} + \mu_{n-1}-\nu_n) \GR(s_{n-1}+s_n-t_{n-2}-t_{n-1}+\nu_n) } 
\\
& \times 
   \GR(s_{n-1}-t_{n-2} + \tfrac{\nu_n}{2})
   \GR(s_{n-1}-t_{n-1}-\tfrac{\nu_n}{2}) \GR(s_{n}-t_{n-1}+\tfrac{\nu_n}{2}) \GR(s_{n}-t_{n-2}-\tfrac{\nu_n}{2})
\\
& \times 
   \GR(s+s_{n-2}-s_{n-1}-s_n+\mu_{n-1})  \GR(-s_{n-2}+s_{n-1}+s_n+\nu_n)
\\
& \times 
  \frac{\GR( (n-1)s-s_{n-1}-s_n + |\mu|-\mu_{n-1})}{ \GR( (n-1)s-s_{n-2} + |\mu| -\mu_{n-1}+\nu_n) }
\\
& \times  
 \frac{ \GR(ns-2s_{n-1}+|\mu|) \GR(ns -2s_n+|\mu|)}{ \GR(2ns-2s_{n-1}-2s_n+2|\mu|)}   
  \,\prod_{i=1}^n ds_i \prod_{i=1}^{n-1} dt_i \prod_{i=1}^{n-3} dq_i.
\end{align*}
Applying Proposition \ref{prop:Pieri_GLn}, with
$$ 
  s_i = is -s_{i}, \ \ \ 
   \sigma= s - \nu_n
$$
to the integral in each $q_i$ ($1\le i\le n-3$), we get
\begin{align*}
   I_{\nu, \mu}^n(s) 
& =  \prod_{i=1}^{n-2} \GR(s+\nu_n+\mu_i) \cdot \GR(s+\nu_n+\mu_n) \prod_{i=1}^{n-1} \GR(s-\nu_n+\mu_i) \\
& \times \frac{2^{n-3}}{(4\pi \sqrt{-1})^{2n-1}}  
    \int_{s_1,\dotsc,s_n, t_1,\dotsc, t_{n-1}}
      V_{n-1, \tilde{\nu}}^D(\bt)  
      V_{n-2, (\mu_1,\dotsc, \mu_{n-2})}^A(s-s_1,\dotsc,(n-3)s-s_{n-3})
\\
& \times   \GR(-s_{n-3}+s_{n-2}-\nu_n) 
    \frac{ \GR( (n-2)s-s_{n-2} + |\mu|-\mu_{n-1}-\mu_n)}{ \GR( (n-2)s-s_{n-3}+|\mu|-\mu_{n-1}-\mu_n-\nu_n)} 
\\
& \times      \prod_{i=1}^{n-3} \GR(s_i-t_i)   
  \prod_{i=1}^{n-3} \GR(s+s_{i-1}-t_i+\mu_{n})
\\
& \times      
    \GR(2s+s_{n-4}-t_{n-2}-t_{n-1}+\mu_{n-1}+\mu_n)
\\
& \times
\prod_{i=1}^{n-4} \frac{\GR(2s+s_{i-1}-t_{i+1}+\mu_{n-1}+\mu_n)}{ \GR(2s+s_{i-1}+s_{i} -t_i-t_{i+1} +\mu_{n-1}+\mu_n)   } 
\\
& \times 
 \frac{ \GR(s+s_{n-3}-s_{n-2}+\mu_{n})  }{  \GR(2s+s_{n-4} +s_{n-3}-t_{n-3}-t_{n-2}-t_{n-1}+\mu_{n-1}+\mu_n)}
  \\
& \times   
    \prod_{i=1}^{n-3} \GR(s+s_{i-1}-t_i+\mu_{n-1}) \cdot \GR(s+s_{n-3}-t_{n-2}-t_{n-1}+\mu_{n-1}) \\
& \times 
   \frac{  \GR(s_{n-2}-t_{n-2}-t_{n-1})   }
  {\GR(s+s_{n-2}-t_{n-2}-t_{n-1} + \mu_{n-1}-\nu_n) \GR(s_{n-1}+s_n-t_{n-2}-t_{n-1}+\nu_n) } 
\\
& \times 
   \GR(s_{n-1}-t_{n-2} + \tfrac{\nu_n}{2})
   \GR(s_{n-1}-t_{n-1}-\tfrac{\nu_n}{2}) \GR(s_{n}-t_{n-1}+\tfrac{\nu_n}{2}) \GR(s_{n}-t_{n-2}-\tfrac{\nu_n}{2})
\\
& \times 
   \GR(s+s_{n-2}-s_{n-1}-s_n+\mu_{n-1})  \GR(-s_{n-2}+s_{n-1}+s_n+\nu_n)
\\
& \times 
  \frac{\GR( (n-1)s-s_{n-1}-s_n + |\mu|-\mu_{n-1})}{ \GR( (n-1)s-s_{n-2} + |\mu| -\mu_{n-1}+\nu_n) }
\\
& \times  
 \frac{ \GR(ns-2s_{n-1}+|\mu|) \GR(ns -2s_n+|\mu|)}{ \GR(2ns-2s_{n-1}-2s_n+2|\mu|)}   
  \,\prod_{i=1}^n ds_i \prod_{i=1}^{n-1} dt_i.
\end{align*}

\noindent 
{\it Step 4}. 
We now treat the integrations over $s_{n-2},s_{n-1},s_n$.
For each integer $n\ge3$, define the following functions of the complex parameters $p,x,y,z,s,t_1,t_2,a,a_1,a_2,b$:
\begin{align*}
& D(x,y,z) \equiv D(p,x,y,z,s,t_1,t_2,a,a_1,a_2,b) \\
& = \frac{ \GR(-p+x-b)\GR( (n-2)s-x + a-a_1-a_2)}{ \GR( (n-2)s-p+a-a_1-a_2-b)} 
\\
& \times 
   \frac{  \GR(s+p-x+a_2)\GR(s+p-t_1-t_2+a_1)  \GR(x-t_1-t_2)   }
  {\GR(s+x-t_1-t_2 +a_1-b) \GR(y+z-t_1-t_2+b) } 
\\
& \times 
   \GR(y-t_1 + \tfrac{b}{2})\GR(y-t_2-\tfrac{b}{2}) \GR(z-t_2+\tfrac{b}{2}) \GR(z-t_1-\tfrac{b}{2})
\\
& \times 
   \GR(s+x-y-z+a_1)  \GR(-x+y+z+b)
\\
& \times 
  \frac{\GR( (n-1)s-y-z +a-a_1)}{ \GR( (n-1)s-x + a -a_1+b) } \cdot 
 \frac{ \GR(ns-2y+a) \GR(ns -2z+a)}{ \GR(2ns-2y-2z+2a)}   
\end{align*}
and
\begin{align*}
& E(p,y,z,s,t_1,t_2,a,a_1,a_2) \\
& = \frac{ \GR(s+p-t_1-t_2+a_2) \GR(s+p-y-z+a_1) \GR(s-p+y+z+a_2)}{ \GR((n-1)s-p+a-a_1)}
\\
& \times \frac{ \GR(y-t_1) \GR(z-t_2) \GR(s+y-t_2+a_2) \GR(s+z-t_1+a_2)}{ \GR(2s+y+z-t_1-t_2+2a_2)} 
\\
& \times \frac{ \GR( (n-1)s-2y+a-a_2) \GR((n-1)s-2z+a-a_2) }{\GR(2(n-1)s-2y-2z+2a-2a_2)}
\\
& \times \GR((n-2)s-y-z+a-a_1-a_2) .
\end{align*}

\begin{lem} \label{lem:edge_integral}
We have
\begin{align} \label{eqn:edge_integral}
\begin{split}
& \frac{1}{ (4\pi \sqrt{-1})^3} \int_{x,y,z} D(p,x,y,z,s,t_1,t_2,a,a_1,a_2,b)  \,dx\,dy\,dz \\
& = \frac{\GR(s+a_1+b) \GR(s+a_2-b)}{ \GR(2s+a_1+a_2) }
    \cdot \frac{1}{ (4\pi \sqrt{-1})^2} \int_{y,z} E(p,y,z,s,t_1,t_2,a,a_1,a_2) \,dy \,dz.
\end{split}
\end{align}
\end{lem}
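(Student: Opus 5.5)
The plan is to reduce the three-fold integral in $x,y,z$ on the left of (\ref{eqn:edge_integral}) to the two-fold integral in $y,z$ on the right. We first integrate out $x$ alone, and then reorganize the factors depending only on $y,z$. The variable $x$ appears in $D$ only through
\[
\GR(-p+x-b)\GR(x-t_1-t_2)\GR((n-2)s-x+a-a_1-a_2)\GR(s+p-x+a_2)\GR(-x+y+z+b)\GR(s+x-y-z+a_1)
\]
in the numerator and through
\[
\GR(s+x-t_1-t_2+a_1-b)\,\GR((n-1)s-x+a-a_1+b)
\]
in the denominator. There are three gamma factors with $+x$ and three with $-x$ upstairs, and one of each type downstairs.

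\emph{Step 1: the $x$-integral.} This is exactly an $I_3$ integral. The balancing condition of Lemma \ref{lem:int3} reads
\[
(-p-b)+(-t_1-t_2)+(s-y-z+a_1)+((n-2)s+a-a_1-a_2)+(s+p+a_2)+(y+z+b)
\]
on one side and
\[
(s-t_1-t_2+a_1-b)+((n-1)s+a-a_1+b)
\]
on the other. Both equal $ns+a-t_1-t_2$, so the condition holds and Lemma \ref{lem:int3} applies. I would choose the ordering of $(a,b,c;\alpha,\beta,\gamma)$ so that, after the transformation, the new integrand in $x$ simplifies. Concretely, the right-hand side of Lemma \ref{lem:int3} produces a prefactor of four gamma functions over four gamma functions, and a transformed $I_3$ whose parameters mix $x$-data with $y+z$. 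One ordering should be aimed at cancelling $\GR((n-2)s-p+a-a_1-a_2-b)$ and $\GR((n-1)s-x+\dots)$ against existing factors of $D$. This would leave $\GR(s+p-t_1-t_2+a_1)$ paired with a new factor in $y+z$.

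\emph{Step 2: the remaining integrals.} The $y,z$-dependence in $D$ (the four $\GR(y-t_i\pm b/2)$, $\GR(z-t_i\pm b/2)$ factors and the denominator $\GR(y+z-t_1-t_2+b)$) must be converted into the $b$-free factors
\[
\GR(y-t_1)\GR(z-t_2)\GR(s+y-t_2+a_2)\GR(s+z-t_1+a_2)/\GR(2s+y+z-t_1-t_2+2a_2)
\]
of $E$. This is where the prefactor $\GR(s+a_1+b)\GR(s+a_2-b)/\GR(2s+a_1+a_2)$ must come from. I would therefore expect to perform a second application of Lemma \ref{lem:int3}, or of Lemma \ref{lem:int1}(ii), in the remaining $x$-variable. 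Alternatively, after substituting $x\to x+y+z$, I would apply it in a combination such as $y$ with $y+z$ fixed. The final step uses Barnes' second lemma (Lemma \ref{lem:int1}(iii)) to eliminate one variable completely. This mirrors Step 7 in the proof of Lemma \ref{lem:keylemma_Pieri_SO2n}, where a balanced $I_1$ yields precisely a factor like $\GR(\sigma-\nu_n)$ independent of the other variables. Here the analogous output would be $\GR(s+a_1+b)\GR(s+a_2-b)$, while the denominator $\GR(2s+a_1+a_2)$ would arise from a balancing sum.

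\emph{Expected obstacle.} The main difficulty is bookkeeping. I need to choose which triples to feed into Lemma \ref{lem:int3} at each stage so that every intermediate integral is again balanced, and so that the $\pm b/2$ shifts disappear from the $y,z$-factors. If the direct route stalls, I would first try an inductive-style trick like that in Lemma \ref{lem:A=B}. That trick is to introduce an auxiliary Barnes integral via Lemma \ref{lem:1st} for the ratio
\[
\GR(ns-2y+a)\GR(ns-2z+a)/\GR(2ns-2y-2z+2a),
\]
splitting it into a one-variable integral with linear arguments. This makes all factors linear in $y$ and $z$ separately, after which successive $I_3$ transformations and a final Barnes' second lemma should reproduce $E$. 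Throughout, all contour shifts are justified in a domain of absolute convergence and extended meromorphically, as stipulated at the start of Section \ref{sec:Wh}.
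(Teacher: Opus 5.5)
Your Step 1 is correct and matches the paper's first move. The paper applies Lemma \ref{lem:int3} to the $x$-integral with $(a,b,c,d)=(s-y-z+a_1,-p-b,-t_1-t_2,s-t_1-t_2+a_1-b)$ and $(\alpha,\beta,\gamma,\delta)=((n-2)s+a-a_1-a_2,\,y+z+b,\,s+p+a_2,\,(n-1)s+a-a_1+b)$, then shifts $x\to x+y+z$. This produces the prefactor $\GR(s+a_2-b)\GR((n-2)s-t_1-t_2+a-a_1-a_2)/\bigl(\GR(s+a_2+b)\GR((n-2)s-p+a-a_1-a_2-b)\bigr)$.

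The gap is everything after that. You leave it as bookkeeping, and the tools you name cannot do the job. After Step 1 the $x$-integrand is again a balanced $I_3$ with two denominator factors, $\GR((n-2)s-x-t_1-t_2+a-a_1-a_2+b)$ and $\GR(x+2s+a_1+a_2-b)$. So Barnes' second lemma does not apply to it, and another use of Lemma \ref{lem:int3} in $x$ just returns a two-denominator $I_3$. To change the $x$-integrand you must first transform in $y$ or $z$. There the ratio $\GR(ns-2y+a)\GR(ns-2z+a)/\GR(2ns-2y-2z+2a)$ has doubled arguments, so these integrals are not of type $I_1$ or $I_3$, and Lemmas \ref{lem:int1} and \ref{lem:int3} do not apply. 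Fixing $y+z$ does not help either: the duplication formula then turns the $y$-integrand into four gamma factors of each sign with no denominator, which is again outside those lemmas.

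The missing ingredient is Lemma \ref{lem:int4}, used both forwards and in reverse.

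\begin{itemize}
\item After $y\to -y$, the $y$-integral is a balanced $I_4$ with $c=ns+a$ and $d=2ns-2z+2a$. Lemma \ref{lem:int4} turns it into a double integral in $(y,w)$.
\item The resulting $(z,w)$-integral is then recognized as the right-hand side of Lemma \ref{lem:int4} and collapsed.
\item Only now is the $x$-integral a balanced $I_1$, with the single denominator $\GR(ns-p+x+a)$. Barnes' second lemma gives $\GR(s+a_1+b)\GR(s+a_2+b)$, and the second factor cancels against Step 1.
\item A second forward/reverse pair of Lemma \ref{lem:int4} (in $y$, then in $(z,w)$) produces $E$. The denominator $\GR(2s+a_1+a_2)$ is the factor $\GR(d-a-b)$ of Lemma \ref{lem:int4}. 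The doubled ratio reappears as $\GR((n-1)s-2y+a-a_2)\GR((n-1)s-2z+a-a_2)/\GR(2(n-1)s-2y-2z+2a-2a_2)$.
\end{itemize}

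So Barnes' second lemma is a middle step, not the last one. Your fallback of splitting the doubled ratio with Barnes' first lemma is only the internal mechanism of Lemma \ref{lem:int4}. That mechanism represents a linear factor times a doubled ratio in one variable, $\GR(z+b)\GR(2z+c)/\GR(2z+d)$. As you state it, it does not separate the ratio into linear factors in $y$ and $z$, and you give no sequence of steps that closes up to $E$.
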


\bpf
On the left hand side of (\ref{eqn:edge_integral}), we apply Lemma \ref{lem:int3} with
\begin{align*}
 (a,b,c,d) & = (s-y-z+a_1,  -p-b, -t_1-t_2,  s-t_1-t_2+a_1-b), \\
 (\alpha,\beta, \gamma,\delta) & = ((n-2)s+a-a_1-a_2, y+z+b,  s+p+a_2,  (n-1)s+a-a_1+b) ,
\end{align*}to the integral in $x$.  We then substitute
$x \to x + y+z $ to get 
\begin{align*}
& \frac{1}{ (4\pi \sqrt{-1})^3} \int_{x,y,z} D(x,y,z)  \,dx\,dy\,dz \\
& = \frac{ \GR(s+a_2-b) \GR( (n-2)s-t_1-t_2+a-a_1-a_2) }{ \GR(s+a_2+b) \GR( (n-2)s-p+a-a_1-a_2-b) }
\cdot \frac{1}{ (4\pi \sqrt{-1})^3} \int_{x,y,z} D_1(x,y,z) \,dx\, dy \,dz,
\end{align*}
where 
\begin{align*}
 D_1(x,y,z)
& = \frac{ \GR(2s+p-y-z+a_1+a_2) }{ \GR(y+z-t_1-t_2-b) }
\\
& \times \GR(y-t_1 + \tfrac{b}{2})\GR(y-t_2-\tfrac{b}{2}) \GR(z-t_2+\tfrac{b}{2}) \GR(z-t_1-\tfrac{b}{2}) \\
& \times \frac{ \GR( (n-2)s-x-y-z+a-a_1-a_2) \GR(-x+b) \GR(-x+p-t_1-t_2)}{ \GR( (n-2)s-x-t_1-t_2+a-a_1-a_2+b)} 
\\
& \times \frac{ \GR(x+s+a_1) \GR(x+y+z-p-b) \GR(x+s+a_2) }{ \GR(x+2s+a_1+a_2-b)}
\\
& \times \frac{ \GR(ns-2y+a) \GR(ns-2z+a)}{ \GR(2ns-2y-2z+2a)}.
\end{align*}

\medskip 

Let us consider the integral over $ y $. We replace $y\to -y$ and use Lemma \ref{lem:int4} with 
\begin{align*}
(a,b,c,d) & = ( (n-2)s-x-z+a-a_1-a_2,  2s+p-z+a_1+a_2,  ns +a,  2ns-2z+2a), \\
 (\alpha,\beta, \gamma,\delta) & = (-t_2-\tfrac{b}{2},  -p+x+z-b ,  -t_1+\tfrac{b}{2},   z-t_1-t_2-b),
\end{align*}
and substitute $ y \to x-y - \tfrac{b}{2} $. Then the left hand side of (\ref{eqn:edge_integral}) becomes 
\begin{align} \label{eqn:edge_integral_2}
\begin{split}
& \frac{ \GR(s+a_2-b) \GR( (n-2)s-t_1-t_2+a-a_1-a_2) }{ \GR(s+a_2+b) \GR( (n-2)s-p+a-a_1-a_2-b) }
\\
& \times  \frac{1}{ (4\pi \sqrt{-1})^4} \int_{x,y,z,w} D_2(x,y,z,w) \,dx\, dy\, dz\, dw,
\end{split} 
\end{align}
where 
\begin{align*}&D_2(x,y,z,w)\\
 &=  \frac{ \GR(x+s+a_1)\GR(x+s+a_2) \GR(-x+b)
   \GR(p-y-t_1) \GR(y-x-t_2)  }{ \GR(2s+x+a_1+a_2-b) \GR((n-2)s-x-t_1-t_2+a-a_1-a_2+b)\GR( ns-p+x+a) } 
\\
& \times \GR(w+2s-p+x+y+a_1+a_2+\tfrac{b}{2}) \GR(w+ 2s+x-t_2+a_1+a_2-\tfrac{b}{2})   \\
& \times \GR(-w+(n-2)s-x-t_1+a-a_1-a_2+\tfrac{b}{2}) \GR(-w+ (n-2)s-y+a-a_1-a_2-\tfrac{b}{2}) \\
& \times \frac{ \GR(z+w+2s-p+2x+a_1+a_2-b) \GR(z-p+y-\tfrac{b}{2}) \GR(z-t_2+\tfrac{b}{2}) }{ \GR(z+w+2s-p+y+x+a_1+a_2-t_2-b)}\\
& \times \frac{ \GR( -z+(n-2)s-x-t_1+a-a_1-a_2+\tfrac{b}{2}) \GR( -z+(n-2)s-y+a-a_1-a_2-\tfrac{b}{2} )
   }{ \GR( -z-w+ 2(n-2)s -y-x-t_1+2a-2a_1-2a_2)} 
\\
& \times \GR(-z-w-x+p).
\end{align*}
Here we substituted $ w \to -w-2s-x-a_1-a_2 $. 
Then we can apply Lemma \ref{lem:int4} in reverse, with 
\begin{align*}
 (a,b,c,d) & = (-p+y-\tfrac{b}{2},  2s-p+x+y+a_1+a_2+\tfrac{b}{2},   2s-p+2x+a_1+a_2-b,  \\
    & \qquad 4s+2x+2y-2p+2a_1+2a_2-b),  \\
(\alpha, \beta, \gamma, \delta) & = ( (n-2)s-x-t_1+a-a_1-a_2+\tfrac{b}{2},  (n-2)s-y+a-a_1-a_2-\tfrac{b}{2},    \\
& \qquad   p-x-t_2+\tfrac{b}{2},    2(n-2)s-x-y-t_1-t_2+2a-2a_1-2a_2+\tfrac{b}{2})
\end{align*}to the integral in $z$ and $w$, and follow with 
 the substitution $ z \to z-x+\frac{b}{2} $, to obtain 
\begin{align*}
 \frac{1}{ (4\pi \sqrt{-1})^4} \int_{x,y,z,w} D_2(x,y,z,w) \,dx \,dy \,dz\, dw = \frac{1}{ (4\pi \sqrt{-1})^3} \int_{x,y,z} D_3(x,y,z) \,dx \,dy \,dz 
\end{align*}
with 
\begin{align*}
 D_3(x,y,z) & = 
  \frac{ \GR(x+s+a_1)\GR(x+s+a_2) \GR(-x+b)}{ \GR( ns-p+x+a) } 
\\
& \times \GR( (n-2)s+x-y-z+a-a_1-a_2-b) \GR(-p-x+y+z) \\
& \times \frac{ \GR(p-y-t_1) \GR( (n-2)s-y-t_2+a-a_1-a_2) }
 { \GR(2(n-2)s-y-z-t_1-t_2+2a-2a_1-2a_2)}
\\
& \times \GR((n-2)s-z-t_1+a-a_1-a_2) \GR(p-z-t_2) \\
& \times \frac{\GR(2s-p+2y+a_1+a_2) \GR(2s-p+2z+a_1+a_2)}{ \GR(4s-2p+2y+2z+2a_1+2a_2)}
\\
& \times  \GR(2s-p+y+z+a_1+a_2+b).
\end{align*}

We can then apply Barnes' second lemma to the integral in $x$, to find that the expression in (\ref{eqn:edge_integral_2}) equals
\begin{align*}
&  \GR(s +a_1 +b) \GR(s+a_2-b) \cdot \frac{1}{ (4\pi \sqrt{-1})^2} \int_{y,z} D_4(y,z) \,dy \,dz,
\end{align*}
where
\begin{align*}
D_4(y,z) 
& = \frac{ \GR((n-2)s-t_1-t_2+a-a_1-a_2) \GR(p-y-t_1) \GR(p-z-t_2)}{ \GR(2(n-2)s-y-z-t_1-t_2+2a-2a_1-2a_2)}
\\
& \times \GR( (n-2)s-y-t_2+a-a_1-a_2) \GR( (n-2)s-z-t_1+a-a_1-a_2) \\
& \times \frac{ \GR(2s-p+2y+a_1+a_2) \GR(2s-p+2z+a_1+a_2)}{ \GR(4s-2p+2y+2z+2a_1+2a_2)} 
\\
& \times \frac{\GR((n-2)s-y-z+a-a_1-a_2) \GR(s-p+y+z+a_1)\GR(s-p+y+z+a_2) }{ 
       \GR( (n-1)s-p+a-a_1) \GR( (n-1)s-p+a-a_2) }.
\end{align*}

We apply Lemma \ref{lem:int4},  with 
\begin{align*}
 (a,b,c,d) & = (s-p+z+a_2,   s-p+z+a_1,  2s-p+a_1+a_2,  4s-2p+2z+2a_1+2a_2), \\
 (\alpha, \beta, \gamma, \delta) & = ( (n-2)s-z+a-a_1-a_2,  (n-2)s-t_2+a-a_1-a_2,  p-t_1, \\
 & \qquad  2(n-2)s-z-t_1-t_2+2a-2a_1-2a_2),
\end{align*}to the integral in $y$,
and then make the changes of variables $ z \to -z $ and $ w \to -w-s-a_1 $, to obtain
\begin{align*}
& \frac{1}{(4\pi \sqrt{-1})^2} \int_{y,z} D_4(y,z) \,dy dz\\
& = \frac{ 1}{ \GR(2s+a_1+a_2) \GR( (n-1)s-p+a-a_1) \GR((n-1)s-p+a-a_2)}  \\
& \times  \frac{1}{ (4\pi \sqrt{-1})^3} \int_{y,z,w} 
  D_5(y,z,w) \,dy\, dz \,dw,
\end{align*}
where 
\begin{align*}
 D_5(y,z,w)
& =  \GR(y-t_1) \GR( (n-2)s-y-t_2+a-a_1-a_2) 
\\
& \times \GR(w+s+p-y+a_1) \GR( w+(n-1)s-t_2+a-a_2) \\
& \times    \GR(-w+s-t_1+a_2) \GR(-w+s-p+y+a_2) 
   \GR( z+w +(n-1)s+a-a_2) \\
& \times 
\frac{   \GR( z +(n-2)s-y+a-a_1-a_2) \GR(z+p-t_2)   }{  \GR(z+w +(2n-3)s-y-t_2+2a-a_1-2a_2) }
\\
& \times \frac{ \GR(-z+s-t_1+a_2)   \GR(-z+s-p+y+a_2) \GR(-z-w-p)
} {\GR(-z-w+2s+y-p-t_1+2a_2)}.
   \end{align*}

We apply Lemma \ref{lem:int4} again in reverse for $ \int_{z,w} $ with 
\begin{align*}
(a,b,c,d) & =  ( (n-2)s-y+a-a_1-a_2,  s+p-y+a_1,  (n-1)s+a-a_2, \\
 & \qquad 2(n-1)s-2y+2a-2a_2), \\
(\alpha,\beta,\gamma,\delta) & = (s-t_1+a_2,  s-p+y+a_2,  -t_2,  2s+y-t_1-t_2+2a_2)
\end{align*}
and replace $ z $ by $-z $ to obtain
\begin{align*}
 &\frac{1}{ (4\pi \sqrt{-1})^2} \int_{y,z} D_4(y,z) \,dy\,dz 
\\&= \frac{1}{ \GR(2s+a_1+a_2)} \frac{1}{ (4\pi \sqrt{-1})^2} \int_{y,z} E(p,y,z,s,t_1,t_2,a,a_1,a_2) \,dy \,dz,
\end{align*}
as desired.
\epf

\bigskip 

Now we return to the integral $ I_{\nu,\mu}^n (s) $.
For $n=3$, the expression obtained in Step 2 can be written as follows, using the explicit formula for $V^A_2$:
\begin{align*}
 I^3_{\nu,\mu}(s)
 & =\GR(s-\nu_3+\mu_1)
     \GR(s-\nu_3+\mu_2)
     \GR(s+\nu_3+\mu_1)
     \GR(s+\nu_3+\mu_3)\\
 &\times\frac{1}{(4\pi\sqrt{-1})^5}
 \int_{x,y,z,t_1,t_2} V^D_{2,(\nu_1,\nu_2)}(t_1,t_2)\\
 &\times  D(0,x,y,z,s,t_1,t_2,|\mu|,\mu_2,\mu_3,\nu_3)
 \,dx \,dy \,dz \,dt_1\, dt_2.
\end{align*}
We apply 
Lemma \ref{lem:edge_integral} with
\[
(p,a,a_1,a_2,b)=(0,|\mu|,\mu_2,\mu_3,\nu_3).
\]
Moreover, the definitions of $E$, $M_2$, and $V^A_2$ give the identity
\begin{align*}
&E(0,y,z,s,t_1,t_2,|\mu|,\mu_2,\mu_3)\\
&=\frac{2}{\GR(2s+\mu_1+\mu_3)}
 V^A_{2,(\mu_1,\mu_2)}(s-y-z)
 M_2\bigl((y,z),(t_1,t_2);s+\mu_3\bigr)\\
&\times
 \frac{\GR(2s-2y+\mu_1+\mu_2) \GR(2s-2z+\mu_1+\mu_2)}{\GR(4s-2y-2z+2\mu_1+2\mu_2)}.
\end{align*}
Combining this identity with Lemma \ref{lem:edge_integral} and writing $(y,z)=(r_1,r_2)$, 
we obtain the $n=3$ case of the common $M_{n-1}$-integral displayed below.

\medskip 

For $ n \ge 4 $, 
we apply Lemma \ref{lem:edge_integral} with 
\[ 
  (p,x,y,z, t_1, t_2, a,a_1,a_2, b) 
 = (s_{n-3}, s_{n-2}, s_{n-1},s_n,  t_{n-2}, t_{n-1}, |\mu|,  \mu_{n-1},  \mu_n,  \nu_n)
\] 
to get 
\begin{align*}
   I_{\nu, \mu}^n(s) 
& =  \frac{1}{ \GR(2s+\mu_{n-1}+\mu_n)}  \prod_{i=1}^{n} \GR(s+\nu_n+\mu_i)  \prod_{i=1}^{n} \GR(s-\nu_n+\mu_i) \\
& \times \frac{2^{n-3}}{(4\pi \sqrt{-1})^{2n-2}}  
    \int_{s_1,\dotsc,s_{n-3},s_{n-1},s_n, t_1,\dotsc, t_{n-1}}
      V_{n-1, \tilde{\nu}}^D(\bt)  \\
& \times    V_{n-2, (\mu_1,\dotsc, \mu_{n-2})}^A(s-s_1,\dotsc,(n-3)s-s_{n-3})
\\
& \times      \prod_{i=1}^{n-3} \GR(s_i-t_i)   
  \prod_{i=1}^{n-3} \GR(s+s_{i-1}-t_i+\mu_{n})
    \prod_{i=1}^{n-3} \GR(s+s_{i-1}-t_i+\mu_{n-1}) \\
& \times
\prod_{i=1}^{n-4} \frac{ \GR(2s+s_{i-1}-t_{i+1}+\mu_{n-1}+\mu_n)}{ \GR(2s+s_{i-1}+s_{i} -t_i-t_{i+1} +\mu_{n-1}+\mu_n)   } 
\\
& \times \GR(s+s_{n-3}-t_{n-2}-t_{n-1}+\mu_n) \\
& \times 
 \frac{ \GR(2s+s_{n-4}-t_{n-2}-t_{n-1}+\mu_{n-1}+\mu_n)  }{  \GR(2s+s_{n-4} +s_{n-3}-t_{n-3}-t_{n-2}-t_{n-1}+\mu_{n-1}+\mu_n)}
  \\
& \times 
\frac{  \GR(s+s_{n-3}-s_{n-1}-s_n+\mu_{n-1}) \GR(s-s_{n-3}+s_{n-1}+s_n+\mu_n)}{ 
   \GR((n-1)s-s_{n-3}+|\mu|-\mu_{n-1})}
\\
& \times \frac{ \GR(s_{n-1}-t_{n-2}) \GR(s_n-t_{n-1}) \GR(s+s_{n-1}-t_{n-1}+\mu_n) \GR(s+s_n-t_{n-2}+\mu_n)}{ 
   \GR(2s+s_{n-1}+s_n-t_{n-2}-t_{n-1}+2\mu_n)} 
\\
& \times \frac{ \GR( (n-1)s-2s_{n-1}+|\mu|-\mu_n) \GR((n-1)s-2s_n+|\mu|-\mu_n) }{\GR(2(n-1)s-2s_{n-1}-2s_n+2|\mu|-2\mu_n)}
\\
& \times \GR((n-2)s-s_{n-1}-s_n+|\mu|-\mu_{n-1}-\mu_n)
   \prod_{i=1}^{n-3} ds_i ds_{n-1}ds_n \prod_{i=1}^{n-1} dt_i.
\end{align*}
Therefore we get 
\begin{align*}
I_{\nu, \mu}^n(s) 
& = \frac{1}{ \GR(2s+\mu_{n-1}+\mu_n)} \prod_{i=1}^{n} \GR(s+\nu_n+\mu_i)  \prod_{i=1}^{n} \GR(s-\nu_n+\mu_i) 
\\
& \times \frac{2^{n-3}}{(4\pi \sqrt{-1})^{n+1}}  
    \int_{s_{n-1}, s_{n}, t_1,\dotsc, t_{n-1} }
      V_{n-1, \tilde{\nu}}^D(\bt)  \cdot \GR(s-t_1+\mu_n) 
\\
& \times T_{n-2, (\mu_1,\dotsc,\mu_{n-2})}(t_1,\dotsc, t_{n-3}, t_{n-2}+t_{n-1};  s, s_{n-1}+s_n,  \mu_{n-1}, \mu_n) 
\\
& \times \frac{ \GR(s_{n-1}-t_{n-2}) \GR(s_n-t_{n-1}) \GR(s+s_{n-1}-t_{n-1}+\mu_n) \GR(s+s_n-t_{n-2}+\mu_n)}{ 
   \GR(2s+s_{n-1}+s_n-t_{n-2}-t_{n-1}+2\mu_n)} 
\\
& \times \frac{ \GR( (n-1)s-2s_{n-1}+|\mu|-\mu_n) \GR((n-1)s-2s_n+|\mu|-\mu_n) }{\GR(2(n-1)s-2s_{n-1}-2s_n+2|\mu|-2\mu_n)}
\\
& \times \GR((n-2)s-s_{n-1}-s_n+|\mu|-\mu_{n-1}-\mu_n)
  \, ds_{n-1}ds_n \prod_{i=1}^{n-1} dt_i. 
\end{align*}
By Proposition \ref{prop:new_GLn}, after the change of variables $ (s_{n-1},s_n) \to (r_{n-2},r_{n-1}) $, we find that 
\begin{align*}
I_{\nu, \mu}^n(s) 
& = \prod_{i=1}^{n-1} \frac{1}{ \GR(2s+\mu_i+\mu_n)} \prod_{i=1}^{n} \GR(s+\nu_n+\mu_i)  \prod_{i=1}^{n} \GR(s-\nu_n+\mu_i) 
\\
& \times \frac{2^{n-2}}{(4\pi \sqrt{-1})^{2n-2}}  
    \int_{t_1,\dotsc, t_{n-1}, r_1,\dotsc, r_{n-1} }
      V_{n-1, \tilde{\nu}}^D (\bt) \cdot \GR(s-t_1+\mu_n)\\
& \times V_{n-1, (\mu_1,\dotsc,\mu_{n-1})}^A (s-r_1,2s-r_2,\dotsc, (n-3)s-r_{n-3}, (n-2)s-r_{n-2}-r_{n-1})
\\
& \times \GR(s-t_{n-3}+r_{n-2}+r_{n-1}+\mu_n) \GR(2s- t_{n-2}-t_{n-1} +r_{n-2}+r_{n-1}+2\mu_n)
\\
& \times \prod_{i=1}^{n-3} \GR(r_i-t_i)  \prod_{i=1}^{n-4} \GR(r_i-t_{i+1}+s+\mu_n) \cdot \GR(r_{n-3}-t_{n-2}-t_{n-1}+s+\mu_n)
\\
& \times
\prod_{i=1}^{n-3} \GR(r_i-t_{i-1}+s+\mu_n) \prod_{i=2}^{n-3} \GR(r_i-t_i+2s+2\mu_n)
\\
& \times  \prod_{i=1}^{n-4} \frac{1}{ \GR(r_i+r_{i+1}-t_i-t_{i+1}+2s+2\mu_n)} 
\\
& \times \frac{ 1}{ \GR(r_{n-3}-t_{n-3}-t_{n-2}-t_{n-1} +2s+r_{n-2}+r_{n-1}+2\mu_n)} 
\\
& \times \frac{ \GR(r_{n-2}-t_{n-2}) \GR(r_{n-1}-t_{n-1}) \GR(s+r_{n-2}-t_{n-1}+\mu_n) \GR(s+r_{n-1}-t_{n-2}+\mu_n)}{ 
   \GR(2s+r_{n-2}+r_{n-1}-t_{n-2}-t_{n-1}+2\mu_n)} 
\\
& \times \frac{ \GR( (n-1)s-2r_{n-2}+|\mu|-\mu_n) \GR((n-1)s-2r_{n-1}+|\mu|-\mu_n) }{\GR(2(n-1)s-2r_{n-2}-2r_{n-1}+2|\mu|-2\mu_n)}
 \, \prod_{i=1}^{n-1} dt_i \prod_{i=1}^{n-1} dr_i.
\end{align*}
Thus, for  $n \ge 3 $, we have 
\begin{align*}
I_{\nu, \mu}^n(s) 
& = \prod_{i=1}^{n-1} \frac{1}{ \GR(2s+\mu_i+\mu_n)} \prod_{i=1}^{n} \GR(s+\nu_n+\mu_i)  \prod_{i=1}^{n} \GR(s-\nu_n+\mu_i) 
\\
& \times \frac{2^{n-2}}{(4\pi \sqrt{-1})^{2n-2}}  
    \int_{t_1,\dotsc, t_{n-1}, r_1,\dotsc, r_{n-1} }
      V_{n-1, \tilde{\nu}}^D(\bt) M_{n-1} (\br,  \bt; s+\mu_n) \\
&  \times V_{n-1, (\mu_1,\dotsc,\mu_{n-1})}^A (s-r_1,2s-r_2,\dotsc, (n-3)s-r_{n-3}, (n-2)s-r_{n-2}-r_{n-1})
\\
& \times \frac{ \GR( (n-1)s-2r_{n-2}+|\mu|-\mu_n) \GR((n-1)s-2r_{n-1}+|\mu|-\mu_n) }{\GR(2(n-1)s-2r_{n-2}-2r_{n-1}+2|\mu|-2\mu_n)}
 \, \prod_{i=1}^{n-1} dt_i \prod_{i=1}^{n-1} dr_i,
\end{align*}
where $ \br = (r_1,\dotsc,r_{n-1}). $
Applying Proposition \ref{prop:Pieri_SO2n} with $\sigma = s+\mu_n $  
and then using Proposition \ref{prop:I}, we obtain (\ref{eqn:SO_zeta_rec}) for all $n \ge 3 $. 
This completes the proof of Proposition \ref{prop:SOGL_induction}(ii), and hence that of Theorem \ref{thm:SOGL}.
\epf

\section{Archimedean zeta integrals on $ {\rm Sp}_{2n} \times {\rm GL}_n $} 
\label{sec:Arch_C}
 
We explain how the integral considered in this section arises from
Method C of \cite[Part B, Section 4 and Section 5.3]{GePS}.
In this construction the unfolding of the theta series uses the two
${\rm GL}_n$-orbits in the space of row vectors, namely $\{0\}$ and
$e_n {\rm GL}_n$, where $e_n=(0,\ldots,0,1)$.
The contribution of the zero orbit vanishes by cuspidality, and the nonzero orbit gives the factor
$(\omega(g)\Phi)(e_n)$; see \cite[Part B, Lemma 4.3.2 and Theorem 4.C]{GePS}.
We specify the real local data, including the metaplectic and compact
characters, to obtain precisely the integral used below.

Put $G={\rm Sp}_{2n}(\R)$, with the realization fixed in Section \ref{sec:Wh},
and $K=G\cap {\rm O}(2n)$.
Let $P=M_PN_P$ be the Siegel parabolic subgroup of $G$, where
\begin{align*}
 M_P & = 
\{ 
 m(b)=
 \begin{pmatrix}
 b&0\\
 0&J_n\,{}^t b^{-1}J_n
 \end{pmatrix} \mid
 b\in {\rm GL}_n(\R) \},
\\
 N_P 
& = \{
 n(S)=
 \begin{pmatrix}
 I_n&SJ_n\\
 0&I_n
 \end{pmatrix} \mid S={}^tS\in M_n(\R)\}.
\end{align*}
We know 
$
 m(b)n(S)m(b)^{-1}=n(bS\,{}^t b),
$
and 
$ \delta_P(m(b))=|\det b \,|^{n+1}. $

Write $U_C$ and $ U_A $ for the upper triangular maximal unipotent subgroups of $ G$ and ${\rm GL}_n(\R)$, respectively,
and $ \psi_C $ and $ \psi_A $ for the characters defined in Section \ref{sec:Wh}.
Every $u \in U_C$ has a unique expression
\[
 u=m(z)n(S),\qquad z\in U_A,\quad S={}^tS.
\]
In these coordinates, we have
\begin{align}
 \label{eq:SpGL-whittaker-characters}
 \psi_C(m(z)n(S))
 =\psi_A(z)\exp(2\pi  \sqrt{-1} S_{nn}).
\end{align}
Indeed, the last row of $z$ is $e_n$, so that
$u_{n,n+1}=S_{nn}$.

\medskip
\noindent
\textit{The Weil representation and the Gaussian.}
We use the circle central extension employed in
\cite[Part B, (4.2.1)]{GePS},
\[
 1\longrightarrow\mathbb T\longrightarrow\widetilde G
 \mathop{\longrightarrow}^{\operatorname{pr}}G
 \longrightarrow1,
 \qquad
 \mathbb T=\{\lambda\in\mathbb C \mid |\lambda|=1\}.
\]
Let $\omega_-$ be its Schr\"odinger representation on
$\mathcal S(\R^n)$, with the inverse additive character and
the following normalization of the distinguished lifts of $M_P$ and $N_P$:
\begin{align}
 \label{eq:SpGL-Weil-unipotent}
 (\omega_-(\widetilde n(S))\Phi)(x)
 &=\exp(-2\pi \sqrt{-1} xS\,{}^t \! x)\Phi(x),\\
 \label{eq:SpGL-Weil-Levi}
 (\omega_-(\widetilde m(b))\Phi)(x)
 &=|\det b\,|^{1/2}\Phi(xb),\\
 \label{eq:SpGL-Weil-center}
 \omega_-(\lambda)\Phi&=\lambda\Phi,
 \qquad \lambda\in\mathbb T.
\end{align}
In particular, the quadratic expression in \eqref{eq:SpGL-Weil-unipotent} contains no factor $1/2$.
Here $ \Phi \in \mathcal{S}(\R^n) $ and $x$ is a row vector.
These are the conventions of  \cite[Part B, (4.2.2), (4.2.3)]{GePS},
with the additive character replaced by its inverse.

Put $\widetilde K=\operatorname{pr}^{-1}(K)$. 
To obtain a right $K$-invariant integrand, 
we choose a Gaussian spanning a one-dimensional $\widetilde K$-type and 
take the inducing section in the inverse type. 
With the quadratic-phase convention \eqref{eq:SpGL-Weil-unipotent}, the appropriate Gaussian, 
normalized to have value $1$ at the origin, is
\begin{align}
 \label{eq:SpGL-Gaussian}
 \Phi_0(x)=
 \exp\Bigl(-2\pi\sum_{i=1}^n x_i^2\Bigr),
 \qquad \Phi_0(0)=1.
\end{align}
For completeness, the infinitesimal action of the rotation generator in the $i$-th symplectic coordinate plane is
\[
 \frac{\sqrt{-1} }{8\pi}\frac{\partial^2}{\partial x_i^2} -2\pi \sqrt{-1} x_i^2.
\]
For $ c>0$, the coefficient of $ x_i^2 \exp( -c \sum_{l} x_l^2) $ in the action of this generator is 
\[
  \sqrt{-1} \Bigl( \frac{c^2}{2\pi} -2\pi \Bigr),
\]
so the eigenvector condition forces $c=2\pi$.
Together with invariance under orthogonal changes of the
$x$-coordinates, this proves the assertion about $\widetilde K$.
Denote the resulting unitary character of $\widetilde{K}$ by $\kappa$:
\begin{align}
 \label{eq:SpGL-Gaussian-K-type}
 \omega_-(\widetilde k)\Phi_0
 =\kappa(\widetilde k)\Phi_0,
 \qquad \widetilde k\in\widetilde K.
\end{align}
In particular,
\[
 \kappa(\lambda)=\lambda\quad(\lambda\in\mathbb T),
 \qquad
 \kappa(\widetilde m(q))=1\quad(q\in {\rm O}(n)).
\]

\medskip
\noindent
\textit{The inducing section.}
Let $\pi_\mu$ be the spherical principal series of ${\rm GL}_n(\R)$ with parameter $\mu$.
We realize it in its $\psi_A^{-1}$-Whittaker model as in Section \ref{sec:Arch_D}:
\[
 \epsilon_n=
 \operatorname{diag}
 \bigl((-1)^{n-1},(-1)^{n-2},\ldots,-1,1\bigr),
 \qquad
 W^{A,-}_{n,\mu}(b)=W^A_{n,\mu}(\epsilon_n b), \quad b \in {\rm GL}_n(\R).
\]
Indeed, $W^{A,-}_{n,\mu}$ is a right ${\rm O}(n)$-invariant $ \psi_A^{-1} $-Whittaker function satisfying
\begin{align*}
 W^{A,-}_{n,\mu}(\alpha[a_1,\ldots,a_n])
 &=W^A_{n,\mu}(\alpha[a_1,\ldots,a_n]).
\end{align*}

Let $\widetilde P=\operatorname{pr}^{-1}(P)$. On $\widetilde{P} $, take the inducing representation defined by 
\[
 \lambda\widetilde m(b)\widetilde n(S)
 \longmapsto
 \lambda^{-1}|\det b\,|^{s-1/2}\pi_\mu(b).
\]
Induction from $\widetilde P$ to $\widetilde G$ is normalized.
The two metaplectic factors are thus taken with mutually inverse
central characters.
Let $F_{\mu,s}$ be the scalar-valued Whittaker realization of the
section of right $\widetilde K$-type $\kappa^{-1}$ defined by
\begin{align}
 \label{eq:SpGL-section}
 F_{\mu,s}
 \bigl(\widetilde m(b)\widetilde n(S)\widetilde k\bigr)
 =|\det b\,|^{s+n/2}
 W^{A,-}_{n,\mu}(b)\kappa(\widetilde k)^{-1}.
\end{align}

Formula \eqref{eq:SpGL-section} is well-defined: the ambiguity in
the $PK$-decomposition comes from $M_P\cap K=m({\rm O}(n))$, on which
$W^{A,-}_{n,\mu}$ is right invariant and the distinguished lifts
have $\kappa=1$.
It follows that
\begin{align}
 \label{eq:SpGL-section-center}
 F_{\mu,s}(\lambda\widetilde g)
 &=\lambda^{-1}F_{\mu,s}(\widetilde g),\\
 \label{eq:SpGL-section-K-type}
 F_{\mu,s}(\widetilde g\widetilde k)
 &=\kappa(\widetilde k)^{-1}F_{\mu,s}(\widetilde g).
\end{align}
Thus $F_{\mu,s}$ is not being taken to be $\widetilde K$-fixed;
its compact character cancels that of the Gaussian.

\medskip
\noindent
\textit{The local integral and its radial expression.}
In the above conventions, the real local integral supplied by Method C is
\begin{align}
 \label{eq:SpGL-local-zeta}
 Z(s,W^C_{n,\nu},F_{\mu,s},\Phi_0)
 =\int_{U_C\backslash G}
 W^C_{n,\nu}(g)
 F_{\mu,s}(\widetilde g)
 (\omega_-(\widetilde g)\Phi_0)(e_n)\,dg,
\end{align}
where $\widetilde g$ is any lift of $g$.
This is the local form of
\cite[Part B, Theorem 4.C and Section 5.3]{GePS}, with the opposite
Whittaker characters in the two factors paired with $W^C_{n,\nu}$.
The product in \eqref{eq:SpGL-local-zeta} is independent of the
choice of lift by \eqref{eq:SpGL-Weil-center} and \eqref{eq:SpGL-section-center}.

We check left $U_C$-invariance explicitly.
For $u=m(z)n(S)$, choose the lift $ \widetilde{u} = \widetilde{m}(z) \widetilde{n}(S) $. 
Since $e_nz=e_n$, equations 
\eqref{eq:SpGL-Weil-unipotent} and \eqref{eq:SpGL-Weil-Levi} give
\[
 (\omega_-(\widetilde u\widetilde g)\Phi_0)(e_n)
 =\exp(-2\pi \sqrt{-1} S_{nn})
 (\omega_-(\widetilde g)\Phi_0)(e_n),
\]
whereas
\[
 F_{\mu,s}(\widetilde u\widetilde g)
 =\psi_A(z)^{-1}F_{\mu,s}(\widetilde g).
\]
These two factors cancel the character
$\psi_C(u)$ of $W^C_{n,\nu}$ by
\eqref{eq:SpGL-whittaker-characters}.
Moreover,
\eqref{eq:SpGL-Gaussian-K-type} and
\eqref{eq:SpGL-section-K-type}, together with the right
$K$-invariance of $W^C_{n,\nu}$, show that the product is right
$K$-invariant.

Write
\[
 a=\beta[a_1,\ldots,a_n]=m(b),
 \qquad b=\alpha[a_1,\ldots,a_n].
\]
Normalize the quotient measure by its Iwasawa expression
\[
 dg=
 \prod_{i=1}^n a_i^{-2(n+1-i)} \frac{da_i}{a_i}\,dk,
 \qquad
 \int_Kdk=1.
\]
For the distinguished lift of $a$, we have
\begin{align}
 \label{eq:SpGL-Gaussian-on-torus}
 (\omega_-(\widetilde a)\Phi_0)(e_n)
 &=(\det b)^{1/2}\Phi_0(e_nb)
   =(\det b)^{1/2}\exp(-2\pi a_n^2),\\
 \label{eq:SpGL-section-on-torus}
 F_{\mu,s}(\widetilde a)
 &=(\det b)^{s+n/2}W^A_{n,\mu}(b).
\end{align}
Consequently,
\begin{align*}
 Z(s,W^C_{n,\nu},F_{\mu,s},\Phi_0)
 & = 
 \int_{(\R_+)^n}
 W^C_{n,\nu}(a)W^A_{n,\mu}(b)
 \\
&\times
 (a_1\cdots a_n)^{s+(n+1)/2}\exp(-2\pi a_n^2) 
 \prod_{i=1}^n a_i^{-2(n+1-i)} \frac{da_i}{a_i}.
\end{align*}
It follows that
\begin{align}
\begin{split}
 \label{eq:SpGL-definition-J}
 Z(s,W^C_{n,\nu},F_{\mu,s},\Phi_0)
 & = 
 \int_{(\R_+)^n}
 \widetilde W^C_{n,\nu}(\beta[a_1,\ldots,a_n])
\\
 &\times
 \widetilde W^A_{n,\mu}(\alpha[a_1,\ldots,a_n])
 \cdot (a_1\cdots a_n)^s
 \exp(-2\pi a_n^2)
 \prod_{i=1}^n\frac{da_i}{a_i}.
\end{split}
\end{align}
We denote this integral by $ J_{\nu,\mu}^n(s) $.
Thus the integral $J^n_{\nu,\mu}(s)$ is exactly the real local
Rankin-Selberg integral for the data and measure specified above.


Using Theorem \ref{thm:SOGL}, we can prove the following.

\begin{thm}   \label{thm:SpGL}
For $ n \ge 2 $, $ \nu=(\nu_1,\dotsc,\nu_n) \in \C^n $ and $ \mu = (\mu_1,\dotsc,\mu_n) \in \C^n $, we have 
\begin{align*}
 J_{\nu,\mu}^n(s)  =  \prod_{1 \le i \le j \le n} \frac{1}{\GR(2s+\mu_i+\mu_j)}
  \prod_{i=1}^n \GR(s+\mu_i)  \prod_{1 \le i, j \le n} \{ \GR(s+\nu_i+\mu_j) \GR(s-\nu_i+\mu_j) \}.
\end{align*}
\end{thm}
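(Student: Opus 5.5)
The plan is to prove the identity
\[
 J^n_{\nu,\mu}(s)=\prod_{j=1}^n\GR(s+\mu_j)\cdot I^n_{\nu,\mu}(s)
\]
directly from the Mellin--Barnes representations, and then substitute Theorem \ref{thm:SOGL}.

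First I insert the Mellin--Barnes representations of $\widetilde W^C_{n,\nu}$ and $\widetilde W^A_{n,\mu}$ into \eqref{eq:SpGL-definition-J} and carry out the $a$-integrations. The variables $a_1,\dots,a_{n-1}$ appear only through monomials, so their integrals collapse the GL and Sp Mellin variables against each other. Concretely, writing the GL kernel as $V^A_{n,\mu}(q_1,\dots,q_{n-1})$, the $a_i$ integrals for $i\le n-1$ force $s_i=is-q_i$ for the $C$-variables (up to the usual Mellin inversion). The remaining $a_n$ integral is a Gaussian Mellin transform:
\[
\int_0^\infty a^{w}e^{-2\pi a^2}\frac{da}{a}=\tfrac12\GR(w)\cdot(\text{normalizing constant}).
\]
This yields a single factor $\GR(ns-2s_n+q_{n-1}+|\mu|)$, up to the exact linear form.

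Second, I expand $V^C_{n,\nu}(\bs)$ via Proposition \ref{prop:exp_Sp2n} as an integral of $V^D_{n,\nu}(\bt)$ against the product of $\GR(s_i-t_i)$, $\GR(s_{n-1}-t_{n-1}-t_n)$, $\GR(s_n-t_n)$ and $\GR(s_i-t_{i-1})$. The variable $s_n$ now occurs only in $\GR(s_n-t_n)$, $\GR(s_n-t_{n-1})$ and the Gaussian factor $\GR(\cdots-2s_n)$. I use a doubling/substitution so that this $s_n$-integral becomes an instance of Barnes' first lemma (Lemma \ref{lem:1st}). The output should be the quotient
\[
\frac{\GR(ns-2t_{n-1}+|\mu|)\,\GR(ns-2t_n+|\mu|)}{\GR(2ns-2t_{n-1}-2t_n+2|\mu|)},
\]
exactly the kernel appearing in Proposition \ref{prop:I}, possibly after shifting variables. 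The remaining gamma factors $\GR(s_i-t_i)\GR(s_i-t_{i-1})$, with $s_i=is-q_i$, then pair with $V^A_{n,\mu}(\bq)$. The combination has the shape of the right-hand side of Proposition \ref{prop:Pieri_GLn} with $\sigma=s$ and arguments $is-t_i$ (and $(n-1)s-t_{n-1}-t_n$ in the last slot, coming from $\GR(s_{n-1}-t_{n-1}-t_n)$). Applying that proposition in reverse collapses the $\bq$-integral to
\[
\prod_{j}\GR(s+\mu_j)\,V^A_{n,\mu}\bigl(s-t_1,\dots,(n-2)s-t_{n-2},(n-1)s-t_{n-1}-t_n\bigr)
\]
divided by the extra factor $\GR(s_{n-1}+\sigma+|\mu|)$. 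That factor should cancel against a leftover gamma from the Gaussian step. Comparing with Proposition \ref{prop:I} (including the factor $2^{-1}$ and the $2\pi\sqrt{-1}$ versus $4\pi\sqrt{-1}$ normalizations) then gives the displayed relation between $J^n_{\nu,\mu}(s)$ and $I^n_{\nu,\mu}(s)$, and Theorem \ref{thm:SOGL} finishes the proof.

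The main obstacle is the bookkeeping in the second step. The Gaussian factor and the two $s_n$-dependent gammas must be arranged so that Barnes' first lemma applies with exactly the right arguments, and one gamma factor must cancel the $\GR(s_{n-1}+\sigma+|\mu|)$ in Proposition \ref{prop:Pieri_GLn}. My first move is to keep $s_n$ and the Gaussian variable separate and introduce the Mellin variable of $a_n$ as an extra contour. I would then apply Barnes' first lemma to $s_n$ first, and only afterwards match against Proposition \ref{prop:Pieri_GLn}, adjusting by the substitution $t\to$ (half-shifts), as in the proof of Proposition \ref{prop:I}. Convergence and contour-separation issues are handled as in Section \ref{sec:Wh}, by working in a domain of absolute convergence and continuing meromorphically.
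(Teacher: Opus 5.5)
Your proposal is correct and is essentially the paper's own proof: Mellin-expand the Gaussian in $a_n$, expand $V^C_{n,\nu}$ by Proposition \ref{prop:exp_Sp2n}, and evaluate the Gaussian-variable integral by Barnes' first lemma. Then collapse the ${\rm GL}_n$ variables by Proposition \ref{prop:Pieri_GLn} with $\sigma=s$ and last argument $(n-1)s-t_{n-1}-t_n$; the factor $\GR(ns+|\mu|-t_{n-1}-t_n)$ cancels, and Proposition \ref{prop:I} gives $J^n_{\nu,\mu}(s)=\prod_j\GR(s+\mu_j)\, I^n_{\nu,\mu}(s)$.

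One correction to your Gaussian step: $\int_0^\infty a^{w}e^{-2\pi a^2}\frac{da}{a}=\tfrac14\GC(w/2)=\tfrac14\GR(w/2)\GR(w/2+1)$. Writing it as $\tfrac12\GR(w)$ leaves a factor $2^{-w/2}$, which depends on $w$ and is not a constant. It is exactly this two-factor form, $\GC(q)=\GR(q)\GR(q+1)$, that makes Barnes' first lemma apply. Its output $2\GR(2x)\GR(2y)\GR(x+y)/\GR(2x+2y)$ carries the extra factor $\GR(x+y)$, and that is the factor cancelling $\GR(s_{n-1}+\sigma+|\mu|)$.
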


\bpf
Inserting the Mellin-Barnes integral representation of $ \widetilde{W}_{n,\mu}^A $, we find that
\begin{align*}
 J_{\nu,\mu}^n(s) 
& = \frac{1}{ (2\pi \sqrt{-1})^{n-1} } \int_{s_1,\dotsc, s_{n-1} } V_{n, \mu}^A(s_1,\dotsc,s_{n-1}) 
\\
& \times 
  \int_{(\R_+)^n} \widetilde{W}_{n, \nu }^C (\beta[a_1,\dotsc,a_n]) \exp(-2\pi a_n^2)
\\
& \times  
   \,  a_1^{s-s_1} a_2^{s+s_1-s_2} \cdots a_{n-1}^{s+s_{n-2}-s_{n-1}} a_n^{s+s_{n-1}+|\mu|}  \prod_{i=1}^n \frac{da_i}{a_i} \prod_{i=1}^{n-1}  ds_i.
\end{align*}
Since\[
 \exp(-2\pi a_n^2) = \frac{1}{4\pi \sqrt{-1}} \int_q \GC(q) \,a_n^{-2q} \,dq,
\]
we get 
\begin{align*}
 J_{\nu,\mu}^n(s) 
& = \frac{2^{-1} }{ (2\pi \sqrt{-1})^n} \int_{s_1,\dotsc,s_{n-1},q} \GC(q) V_{n, \mu}^A (s_1,\dotsc,s_{n-1}) 
\\
& \times \int_{(\R_+)^n} 
   \widetilde{W}_{n, \nu}^C  ( \beta[a_1,\dotsc, a_n]) 
 \prod_{i=1}^{n-1} \Bigl( \frac{a_i}{a_{i+1} } \Bigr)^{\! is-s_i} \cdot
  (a_n^2)^{\frac{ns+|\mu|}{2}-q} \prod_{i=1}^{n } \frac{da_i}{a_i} \prod_{i=1}^{n-1} ds_i \, dq.
\end{align*}
Then we have
\begin{align*}
 J_{\nu,\mu}^n(s) 
& = \frac{2^{-2} }{ (2\pi \sqrt{-1})^n} \int_{s_1,\dotsc, s_{n-1},q}   
   V_{n,\nu}^C (s-s_1, 2s- s_2,\dotsc, (n-1)s-s_{n-1}, \tfrac{ns+|\mu|}{2}-q) 
\\
& \times V_{n, \mu }^A(s_1,s_2, \dotsc,s_{n-1}) \GC(q) 
 \prod_{i=1}^{n-1} ds_i  \,dq.
\end{align*}
In view of Theorem \ref{thm:SOGL}, it is enough to show  that
\begin{align} \label{eqn:zeta_SO_Sp}
    J_{\nu,\mu}^n(s) =  \prod_{i=1}^n \GR(s+\mu_i) \cdot I_{\nu,\mu}^n(s).
\end{align}
To this end we have, by Proposition \ref{prop:exp_Sp2n}, 
\begin{align*}
 J_{\nu,\mu}^n (s) 
& = \frac{2^{-2} }{ (2\pi \sqrt{-1})^n (4\pi \sqrt{-1})^{n} } \int_{s_1, \dotsc, s_{n-1},q, t_1,\dotsc, t_n} \GC(q) 
  V_{n,\nu}^D(t_1,\dotsc,t_n)  \\
& \times  \prod_{i=1}^{n-2} \GR(is-s_i-t_i)  \prod_{i=1}^{n-2} \GR(is-s_i-t_{i-1})  \\
& \times  \GR((n-1)s-s_{n-1}-t_{n-2})  \GR((n-1)s-s_{n-1}-t_{n-1}-t_n)  \\
& \times \GR(\tfrac{ns+|\mu|}{2}-q-t_{n-1})
    \GR(\tfrac{ns+|\mu|}{2}-q-t_n)
\\
& \times V_{n, \mu }^A(s_1,\dotsc,s_{n-1}) 
   \, \prod_{i=1}^{n-1} ds_i \prod_{i=1}^ndt_i dq.
\end{align*}
The integral in $q$ may then be evaluated by way of  Barnes' first lemma and the fact that $\GC(q) = \GR(q) \GR(q+1) $, which together tell us that \[
 \frac{1}{4 \pi \sqrt{-1}} \int_q
 \GC(q) \GR(x-q) \GR(y-q) \,dq = \frac{ \GC(x) \GC(y)}{ \GR(x+y+1) } = \frac{2 \GR(2x) \GR(2y) \GR(x+y)}{\GR(2x+2y)}.
\]
Further, the integrations in $s_1, s_2,\ldots, s_{n-1}$ may be performed using Proposition \ref{prop:Pieri_GLn},  
with 
\[ 
  \sigma = s,  \qquad  s_i = is- t_i  \ (1 \le i \le n-2), \qquad s_{n-1} = (n-1)s-t_{n-1}-t_n. 
\]
The result is 
\begin{align*}
 J_{\nu,\mu}^n(s) 
& =  \prod_{i=1}^n \GR(s+\mu_i) \cdot  \frac{2^{-1} }{ (2\pi \sqrt{-1})^{n} } \int_{t_1,\dotsc, t_n}  
  V_{n,\nu}^D(t_1,\dotsc,t_n)  \\
& \times V_{n, \mu }^A(s-t_1,2s-t_2,\dotsc,(n-2)s-t_{n-2}, (n-1)s-t_{n-1}-t_n) \\
& \times  
   \frac{ \GR(ns+|\mu|-2t_{n-1}) \GR(ns+|\mu|-2t_n) }{ \GR(2ns +2|\mu|-2t_{n-1}-2t_n)}
   \, \prod_{i=1}^{n}dt_i
\\
& = \prod_{i=1}^n \GR(s+\mu_i) \cdot I_{\nu,\mu}^n(s).
\end{align*}
Therefore we get (\ref{eqn:zeta_SO_Sp}) to complete the proof.
\epf

\section*{Statement of AI use}

The authors acknowledge that the idea for the calculation in the case $ {\rm SO}_{3,3} \times {\rm GL}_3 $ 
originated from a suggestion by ChatGPT (OpenAI). 
The authors carried out and verified the calculation in this case and developed and verified its extension to general $n$. 
The authors take full responsibility for the content of the paper.


\end{document}